\newcommand\arXiv[1]{\href{http://arxiv.org/abs/#1}{\nolinkurl{arXiv:#1}}}
\newcommand\MRnumber[1]{\href{http://www.ams.org/mathscinet-getitem?mr=#1}{\nolinkurl{MR#1}}}
\newcommand\DOI[1]{\href{http://dx.doi.org/#1}{\nolinkurl{DOI:#1}}}
\newcommand\MAILTO[1]{\href{mailto:#1}{\nolinkurl{#1}}}
\newtheorem{theorem}{Theorem}
\newtheorem{proposition}[subsection]{Proposition}
\newtheorem{corollary}[subsection]{Corollary}
\newtheorem{lemma}[subsection]{Lemma}
\newtheorem{metacorollary}[subsection]{Conclusion}
\theoremstyle{definition}
\newtheorem{definition}[subsection]{Definition}
\newtheorem{example}[subsection]{Example}
\newtheorem{remark}[subsection]{Remark}
\renewcommand\qedhere{\hfill\qedsymbol}
\tikzset{
    dot/.style={circle,draw,fill,inner sep=1.25pt},
    intdot/.style={circle,draw,fill=gray,inner sep=2pt},
    opendot/.style={circle,draw,inner sep=1pt},
    onearrow/.style={postaction={decorate}, decoration={markings,mark=at position .6 with {\arrow[draw,line width=1pt]{<}}}},
    inversearrow/.style={postaction={decorate}, decoration={markings,mark=at position .45 with {\arrow[draw,line width=1pt]{>}}}},
    twoarrows/.style={draw, postaction={decorate}, decoration={markings,mark=at position .35 with {\arrow[draw,line width=1pt]{<}},mark=at position .75 with {\arrow[draw,line width=1pt]{>}}}},
    twoarrowsempty/.style={postaction={decorate}, decoration={markings,mark=at position .3 with {\arrow[draw,line width=1pt]{<}},mark=at position .7 with {\arrow[draw,line width=1pt]{>}}}},
    inversetwoarrows/.style={draw, postaction={decorate}, decoration={markings,mark=at position .35 with {\arrow[draw,line width=1pt]{>}},mark=at position .7 with {\arrow[draw,line width=1pt]{<}}}},
    squiggly/.style={draw, decorate,decoration={snake,amplitude=.3mm,segment length=2mm}},
    fastsquiggly/.style={draw, decorate,decoration={snake,amplitude=.3mm,segment length=1mm}},
    inversesquiggly/.style={draw, decorate,decoration={snake,amplitude=.2mm,segment length=2mm},postaction={decorate,decoration={markings,mark=at position .45 with {\arrow[draw,line width=1pt]{<}}}}},
    tensor/.style={draw,double,double distance=1.5pt},
}
\newcommand\comult{\,\tikz[baseline=(basepoint)]{ 
    \path (0,0) coordinate (basepoint) (0,4pt) node[dot] {};
    \draw[](-6pt,-4pt) -- (0,4pt);
    \draw[](6pt,-4pt) -- (0,4pt);
    \draw[](0,4pt) -- (0,12pt);
  }\,}
\newcommand\mult{\,\tikz[baseline=(basepoint)]{ 
    \path (0,0) coordinate (basepoint) (0,4pt) node[dot] {};
    \draw[](0,-4pt) -- (0,4pt);
    \draw[](0,4pt) -- (-6pt,12pt);
    \draw[](0,4pt) -- (6pt,12pt);
  }\,}
\newcommand\graphB{\begin{tikzpicture}[baseline=(basepoint)]
    \path (0,12pt) coordinate (basepoint) (0pt,5pt) node[dot] {} (8pt,12pt) node[dot] {} (-4pt,18pt) node[dot] {} (4pt,25pt) node[dot]{};
    \draw (0pt,-2pt) -- (0pt,5pt);
    \draw[] (0pt,5pt) -- (8pt,12pt);
    \draw[] (8pt,12pt) -- (-4pt,18pt);
    \draw[] (0pt,5pt) .. controls +(-6pt,6pt) and +(-6pt,-6pt) .. (-4pt,18pt);
    \draw[] (-4pt,18pt) -- (4pt,25pt);
    \draw[] (8pt,12pt) .. controls +(6pt,6pt) and +(6pt,-6pt) .. (4pt,25pt);
    \draw (4pt,25pt) -- (4pt,32pt);
  \end{tikzpicture}}
\newcommand\graphD{\begin{tikzpicture}[baseline=(basepoint)]
    \path (0,5pt) coordinate (basepoint) (0pt,2pt) node[dot] {} (8pt,12pt) node[dot] {} (-4pt,17pt) node[dot] {};
    \draw (0pt,-6pt) -- (0pt,2pt);
    \draw[] (0pt,0pt) -- (8pt,12pt);
    \draw[] (8pt,12pt) -- (-4pt,17pt);
    \draw[] (0pt,0pt) .. controls +(-6pt,6pt) and +(-6pt,-6pt) .. (-4pt,17pt);
    \draw (8pt,12pt) -- (12pt,24pt);
    \draw (-4pt,17pt) -- (-4pt,24pt);
  \end{tikzpicture}\,}
\newcommand\graphDbox{\,\tikz[baseline=(main.base)]{
    \node[draw,rectangle,inner sep=2pt] (main) {\graphD};
    \draw (main.south) -- ++(0,-8pt);
    \draw (main.north) ++(-6pt,0) -- ++(0,8pt);
    \draw (main.north) ++(6pt,0) -- ++(0,8pt);
  }\,}
\newcommand\graphBbox{\,\tikz[baseline=(main.base)]{
    \node[draw,rectangle,inner sep=2pt] (main) {
      \graphB
    };
    \draw (main.south) -- ++(0,-8pt);
    \draw (main.north) -- ++(0,8pt);
  }\,}
\newcommand\graphA{\,\begin{tikzpicture}[baseline=(basepoint),xscale=-1]
    \path (0,-13pt) coordinate (basepoint) (0pt,-1pt) node[dot] {} (8pt,-12pt) node[dot] {} (-4pt,-17pt) node[dot] {};
    \draw (0pt,6pt) -- (0pt,-2pt);
    \draw[] (0pt,0pt) -- (8pt,-12pt);
    \draw[] (8pt,-12pt) -- (-4pt,-17pt);
    \draw[] (0pt,0pt) .. controls +(-6pt,-6pt) and +(-6pt,6pt) .. (-4pt,-17pt);
    \draw (8pt,-12pt) -- (12pt,-24pt);
    \draw (-4pt,-17pt) -- (-4pt,-24pt);
  \end{tikzpicture}}
\newcommand\graphAbox{\,\tikz[baseline=(main.base)]{
    \node[draw,rectangle,inner sep=2pt] (main) {%
    \graphA%
  };
    \draw (main.north) -- ++(0,8pt);
    \draw (main.south) ++(-6pt,0) -- ++(0,-8pt);
    \draw (main.south) ++(6pt,0) -- ++(0,-8pt);
  }\,}
\newcommand\graphI{\,\tikz[baseline=(basepoint)]{
    \path (0,0pt) coordinate (basepoint) (0,1pt) node[dot] {} (0,7pt) node[dot] {};
    \draw (0,-4pt) -- (0,0pt); \draw (0,8pt) -- (0,12pt);
    \draw (0,1pt) .. controls +(4pt,1pt) and +(4pt,-1pt) .. (0,7pt);
    \draw (0,1pt) .. controls +(-4pt,1pt) and +(-4pt,-1pt) .. (0,7pt);
  }\,}
\newcommand\define[1]{{\em #1}}
\newcommand\cat[1]{\textmd{\textsc{#1}}}
\newcommand\Ff{{\mathcal F}}
\newcommand\bN{{\mathbb N}}
\newcommand\bR{{\mathbb R}}
\newcommand\bS{{\mathbb S}}
\newcommand\bZ{{\mathbb Z}}
\renewcommand{\d}{{\mathrm d}}
\newcommand\B{{\mathbf B}}
\newcommand\C{{\mathrm C}}
\newcommand\st{{\textrm{ s.t.\ }}}
\DeclareMathOperator{\End}{End}
\newcommand{\DGVect}{\cat{DGVect}}
\DeclareMathOperator{\Homology}{H}
\renewcommand\H\Homology
\newcommand{\id}{\mathrm{id}}
\DeclareMathOperator{\Frob}{Frob}
\DeclareMathOperator{\qloc}{Qloc}
\DeclareMathOperator{\di}{di}
\DeclareMathOperator{\pr}{pr}
\DeclareMathOperator{\op}{op}
\newcommand\h{\mathbf{h}}
\newcommand\sh{\mathbf{sh}}
\DeclareMathOperator{\diag}{diag}
\newcommand\isom{\overset{\sim}{\to}}
\newcommand\mono{\hookrightarrow}
\newcommand\epi{\twoheadrightarrow}
\newcommand\onto\epi
\newcommand\<\langle
\renewcommand\>\rangle
\newcommand\cover{\overset\sim\epi}
\renewcommand\[{\llbracket}
\newcommand\shriek{{\textnormal{\textexclamdown}}}
\newcommand\want{\overset{\mathrm{want}}=}
\newcommand\dwant{\overset{\mathrm{want}}:}
\newcommand\wantspacing{\overset{\phantom{\mathrm{want}}}=}
\renewcommand\partial\d
\title{Tree- versus graph-level quasilocal Poincar\texorpdfstring{\'e}{e} duality on \texorpdfstring{\ensuremath{S^{1}}}{S^1}}
\author{Theo Johnson-Freyd}
\begin{document}

\begin{abstract}
Among its many corollaries, Poincar\'e duality implies that the de Rham cohomology of a compact oriented manifold is a shifted commutative Frobenius algebra --- a commutative Frobenius algebra in which the comultiplication has cohomological degree equal to the dimension of the manifold.  We study the question of whether this structure lifts to a ``homotopy'' shifted commutative Frobenius algebra structure at the cochain level.  To make this question nontrivial, we impose a mild locality-type condition that we call ``quasilocality'': strict locality at the cochain level is unreasonable, but it is reasonable to ask for homotopically-constant families of operations that become local ``in the limit.''
To make the question concrete, we 
take the manifold  to be the one-dimensional circle.

The answer to whether  a quasilocal homotopy-Frobenius algebra structure exists turns out to depend on the choice of context in which to do homotopy algebra.  There are two reasonable worlds in which to study structures (like Frobenius algebras) that involve many-to-many operations: one can work at ``tree level,'' 
which in the case of Frobenius algebras seems to correspond roughly to the world of equivalences of homotopy modules of homotopy commutative algebras;
or one can work at ``graph level,'' corresponding to the world of PROPs.  For the tree-level version of our question, the answer is the unsurprising ``Yes, such a structure exists'' --- indeed, it is unique up to a contractible space of choices.  But for the graph-level version, the answer is the surprising ``No, such a structure does not exist.''  Most of the paper consists of proving this nonexistence,
which is controlled by the numerical value of a certain obstruction, and we compute this value explicitly via a sequence of integrals.
\end{abstract}

\maketitle

\tableofcontents

\section{Introduction: In pursuit of quasilocal cochain-level algebraic structures}

Let $X$ be a  smooth compact manifold, with smooth de Rham complex $\Omega^{\bullet}(X)$ and de Rham cohomology $\H^{\bullet}(X)$. Given some algebraic structure on $\H^{\bullet}(X)$, it is reasonable to look for related structures on $\Omega^{\bullet}(X)$.  For example, the commutative algebra structure on $\H^{\bullet}(X)$ lifts to the wedge product on $\Omega^{\bullet}(X)$.  The latter includes \emph{more data} about the topology of $X$ than does the former.  Indeed, given a choice of quasiisomorphism $\Omega^{\bullet}(X) \isom \H^{\bullet}(X)$, homotopy transfer theory moves the commutative algebra structure on $\Omega^{\bullet}(X)$ to a homotopy-commutative algebra structure  on $\H^{\bullet}(X)$ in which the homotopies imposing associativity and higher coherences, far from being zero, are precisely the \define{Massey products} (see e.g.~\cite{Vallette2012}).

The example of the wedge product on $\Omega^{\bullet}(X)$ is misleading in one way: in general, there is no reason to expect algebraic structures at the cochain level to be ``strict.''  We are then faced with two questions:
\begin{enumerate}
\item \label{question1} What type of algebraic structure on $\Omega^{\bullet}(X)$ constitutes a ``lift'' of a given algebraic structure on $\H^{\bullet}(X)$?  
\item \label{question2} Does a lift of a given structure exist, and how unique is it?
\end{enumerate}

An answer to question~(\ref{question1}) comes from the  model category structure on dg operads (or dioperads or properads or PROPs, depending on the type of algebraic structure considered; definitions are reviewed in Section~\ref{section.diproperads})~\cite{MR2954392}.  Let $P$ be a dg operad and $\h P \overset\sim\epi P$ some cofibrant replacement of~$P$.  If $\h P$ acts on $\Omega^{\bullet}(X)$ then so does any other cofibrant replacement (and in a canonical way, up to a contractible space of choices), and $\h P$-actions can be transferred along quasiisomorphisms.  By definition, a \define{homotopy action} of $P$ is an action of $\h P$.  (An object equipped with a $P$-action is also called a \define{$P$-algebra}.)  If $P$ has trivial differential then $\H^{\bullet}(\h P) = P$.  Thus, by taking cohomology, any homotopy action of $P$ on $\Omega^{\bullet}(X)$ induces an action of $P$ on $\H^{\bullet}(X)$.  We will say that this $\h P$-action on $\Omega^{\bullet}(X)$ \define{lifts to cochain level} the action of $P$ on $\H^{\bullet}(X)$.  See Remark~\ref{remark.modelcategorystructure} and Definition~\ref{defn.action}.

With this definition, question~(\ref{question2}) has a somewhat trivial answer.  Any action of $P$ on $\H^{\bullet}(X)$ together with a quasiisomorphism $\Omega^{\bullet}(X) \isom \H^{\bullet}(X)$ induces, again by homotopy transfer theory, an action of $\h P$ on $\Omega^{\bullet}(X)$, which contains no further data than the original homology-level structure.    

Inspection of examples provides a qualitative reason to reject that trivial answer to question~(\ref{question2}): the algebraic structures produced by transferring up from cohomology to cochains tend to include operations that look highly ``nonlocal.''  An operation on $\Omega^{\bullet}(X)$ is \define{local} if for any collection of inputs, the support of the output of the operation is always contained within the intersections of the supports of the inputs.  More generally, for a set $U \subseteq X \times Y$, we will say that a linear map $f:\Omega^{\bullet}(X) \to \Omega^{\bullet}(Y)$ has \define{support in $U$} if for any $X' \subseteq X$ and $\alpha \in \Omega^{\bullet}(X)$ with  support in $X'$, $f(\alpha)$ is supported in $\{y\in Y \st \exists x\in X' \text{ with } (x,y) \in U\}$.  Then local operations are precisely the ones whose support is in the diagonal.

Such strict locality is unreasonable to expect in general, if one wants operations to be defined on arbitrary inputs --- consider the case of intersecting singular chains rather than smooth de Rham forms, where one can define non-transverse intersections by making very small but non-zero perturbations to the inputs --- but it is reasonable to ask for operations that can be made ``as close to local as desired.''
To make sense of this, one can work with smooth families of operations parameterized by $\epsilon \in \bR_{>0}$ that ``become local'' as $\epsilon \to 0$.  Of course, one should make sure that as an operation is ``made close to local,'' it doesn't change except in homotopically-trivial ways.  Put another way, the given family should be ``homotopically constant.''
The following definition is one way of formalizing this.

\begin{definition} \label{defn.quasilocal}
      Given cochain complexes $V^{\bullet}$ and $W^{\bullet}$ of nuclear topological vector spaces, let $\hom(V^{\bullet},W^{\bullet})$ denote the cochain complex of all continuous linear maps $V \to W$ with its usual differential and the topology of bounded convergence, and $V^{\bullet} \otimes W^{\bullet}$ the projective tensor product (see Remarks~\ref{remark.nuclearTVS} and~\ref{remark.signs}).
  Given a manifold $Y$ and a cochain complex $W^{\bullet}$ of nuclear topological vector spaces with differential $\partial_{W}$, let $\Omega^{\bullet}(Y;W) = \Omega^{\bullet}(Y) \otimes W^{\bullet}$ denote the complex of $W$-valued de Rham forms on $Y$, with its usual differential $\d + \partial_{W}$.
  
  Fix $m,n \in \bN$.  Let $\diag : X \mono X^{\times m}\times X^{\times n}$ denote the diagonal embedding.
  The space $\qloc(X)(m,n)$ of \define{quasilocal operations on $\Omega^{\bullet}(X)$ with $m$ inputs and $n$ outputs} is the subcomplex of $\Omega^{\bullet}(\bR_{>0}; \hom( \Omega^{\bullet}(X)^{\otimes m} , \Omega^{\bullet}(X)^{\otimes n})\bigr)$ consisting of those de Rham forms $f$ with the following property: for any open neighborhood $U \supseteq \diag(X)$, there exists $\epsilon_{U} \in \bR_{>0}$ such that the restriction $f|_{(0,\epsilon_{U})}$ is valued in those operations 
  $\Omega^{\bullet}(X)^{\otimes m} \to \Omega^{\bullet}(X)^{\otimes n}$ with support in $U$.
\end{definition}

To justify Definition~\ref{defn.quasilocal}, note that
 a closed  element  $f\in\Omega^{\bullet}(\bR_{>0}; \hom( \Omega^{\bullet}(X)^{\otimes m} , \Omega^{\bullet}(X)^{\otimes n})\bigr)$ of total degree-$k$
 consists of a smooth family $\epsilon \mapsto f(\epsilon)$, $\epsilon \in \bR_{>0}$, of $[\d,-]$-closed degree-$k$ maps $\Omega^{\bullet}(X)^{\otimes m} \to \Omega^{\bullet}(X)^{\otimes n}$, and also a $1$-form $f'(\epsilon)\d \epsilon$ valued in degree-$(k-1)$ maps such that $\int_{\epsilon_{1}}^{\epsilon_{2}}f'(\epsilon)\d \epsilon$ is a homotopy between $f(\epsilon_{1})$ and $f(\epsilon_{2})$.  It is in this sense that    $\Omega^{\bullet}(\bR_{>0}; \hom( \Omega^{\bullet}(X)^{\otimes m} , \Omega^{\bullet}(X)^{\otimes n})\bigr)$ consists of ``homotopically constant'' operations.

Moreover, the inclusion  $\hom( \Omega^{\bullet}(X)^{\otimes m} , \Omega^{\bullet}(X)^{\otimes n}) \hookrightarrow \Omega^{\bullet}\bigl(\bR_{>0}; \hom( \Omega^{\bullet}(X)^{\otimes m} , \Omega^{\bullet}(X)^{\otimes n})\bigr)$,
sending each operation to the corresponding actually-constant family,
 is a quasiisomorphism, and so for the purpose of finding homotopy algebra structures, we expect the two complexes to be interchangeable, with the difference being that for homotopically-constant families it makes sense to talk about quasilocality.
In particular, closed  operations determine well-defined classes in $\H^{\bullet} \bigl(\hom( \Omega^{\bullet}(X)^{\otimes m} , \Omega^{\bullet}(X)^{\otimes n})\bigr)$, and hence well-defined maps $\H^{\bullet}(X)^{\otimes m} \to \H^{\bullet}(X)^{\otimes n}$.

\begin{remark}\label{remark.nuclearTVS}
The smooth de Rham complex $\Omega^{\bullet}(X)$ is a complex of nuclear Fr\'echet spaces; in general, nuclear topological vector spaces have a well-behaved linear algebra (for a classic reference, see~\cite{MR0225131}).
For any manifolds $Y$ and $Z$, 
the inclusion 
of the algebraic tensor product
$\Omega^{\bullet}(Y) \otimes \Omega^{\bullet}(Z) \mono \Omega^{\bullet}(Y\times Z)$ is a quasiisomorphism and identifies the latter as the completion of the former to the {projective} tensor product.  
 The complex $\hom\bigl(\Omega^{\bullet}(Y) , \Omega^{\bullet}(Z)\bigr)$ 
 of continuous linear maps consists of
  those linear maps with integral kernels on $Y \times Z$ that are distributional in the $Y$-direction and smooth in the $Z$-direction.  When $Y$ is compact and oriented, $\hom\bigl(\Omega^{\bullet}(Y) , \Omega^{\bullet}(Z)\bigr)$ naturally embeds quasiisomorphically into $\Omega^{\bullet}_{\mathrm{dist}}(Y\times Z)[\dim Y]$, where $\Omega^{\bullet}_{\mathrm{dist}}$ denotes the complex of distributional de Rham forms, and this embedding intertwines the two notions of support.

In the remainder of this paper, it will be convenient to identify $\Omega^{\bullet}(X)^{\otimes m}$ with $\Omega^{\bullet}(X^{\times m})$.  More generally,  the nuclear topology on the space $\hom\bigl(\Omega^{\bullet}(X)^{\otimes m},\Omega^{\bullet}(X)^{\otimes n}\bigr)$ of $m$-to-$n$ chain-level operations induces a nuclear topology on the space $\Omega^{\bullet}\bigl(\bR_{>0}; \hom( \Omega^{\bullet}(X)^{\otimes m} , \Omega^{\bullet}(X)^{\otimes n})\bigr)$ of homotopically-constant operations, identifying the latter as the subcomplex of $\Omega^{\bullet}(\bR_{>0} \times X^{\times m} \times X^{\times n})[m(\dim X)]$ consisting of those forms which are smooth in the $\bR_{>0}$ direction, distributional in the first $m$ $X$-directions, and smooth in the last $n$ $X$-directions.  Thus we can think about quasilocal operations quite geometrically.
\end{remark}

With Definition~\ref{defn.quasilocal} in hand, we can provide an answer to question~(\ref{question1}) that rules out the trivial answer to question~(\ref{question2}); c.f.\ Definition~\ref{defn.quasilocallift.redux}:

\begin{definition} \label{defn.quasilocallift}
  Let $P$ be an operad (or generalization thereof) acting on $\H^{\bullet}(X)$ and $\h P$ some cofibrant replacement of $P$.  A \define{quasilocal lift of the $P$-action to the cochain level} is an action of $\h P$ on $\Omega^{\bullet}(X)$ inducing the given $P$-action on cohomology in which all operations are quasilocal.
\end{definition}

In this paper, we focus on one of the simplest compact manifolds and set $X = S^{1}$.  Then $\H^{\bullet}(S^{1})$ has a \define{one-shifted commutative Frobenius algebra structure} --- a commutative Frobenius algebra structure in which the comultiplication has cohomological degree $1$ --- and we address question~(\ref{question2}) for this structure.  (In fact, we will ignore the unit and counit, and try to lift only the ``non-unital, non-counital'' Frobenius algebra structure; see Remark~\ref{remark.openunit}.)  Frobenius algebras involve many-to-many operations and so are not controlled by operads, but rather by generalizations thereof.  There are two reasonable generalizations, called \define{dioperads} and \define{properads}, which we review in Definition~\ref{defn.diproperad}.
Our main results are:

\begin{theorem}\label{mainthm.dioperads}
  There is a homotopically-unique quasilocal lift of the 1-shifted Frobenius algebra structure on $\H^{\bullet}(S^{1})$ to the cochain level if ``homotopy Frobenius algebra'' is interpreted in the sense of {dioperads}.
\end{theorem}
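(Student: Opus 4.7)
The plan is to carry out standard obstruction theory for cofibrant dioperadic algebra structures, exploiting the tree-like (genus-zero) combinatorics of dioperads to keep everything quasilocal. Fix a cofibrant replacement $\h{\Ff}\cover\Ff$ of the 1-shifted commutative Frobenius dioperad $\Ff$, built for instance as the cobar construction on the Koszul dual. Its generators are indexed by trees (dioperadic graphs) with multiple inputs and outputs, and its differential splits each tree into pairs of subtrees grafted along a single internal edge.

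For the generating operations, I take the wedge product for the commutative multiplication (strictly local, so trivially quasilocal), and for the degree-$1$ comultiplication $\Omega^{\bullet}(S^{1})\to\Omega^{\bullet}(S^{1})^{\otimes 2}$ I choose a smooth family $\{\delta_{\epsilon}\}_{\epsilon\in\bR_{>0}}$ of representatives of the diagonal class in $\Omega^{1}(S^{1}\times S^{1})$ whose supports shrink to the diagonal as $\epsilon\to 0$. Together with the associated $d\epsilon$-component $\partial_{\epsilon}\delta_{\epsilon}\,d\epsilon$, this produces a degree-$1$ element of $\qloc(S^{1})(1,2)$. Strict co-associativity, co-commutativity and the Frobenius relation all fail on the nose, but each failure is the differential of an explicit quasilocal homotopy built from iterated mollifiers; no loops appear in any of the required compositions, so support-containment along trees keeps every such homotopy quasilocal.

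To extend to higher generators I proceed by induction on the complexity of the generating tree $T$. For each such $T$, the differential $d\mu_{T}$ is prescribed as a sum of graftings of previously-constructed operations $\mu_{T'}$; by the inductive hypothesis and the fact that dioperadic composition is tree-like, this sum is an element of $\qloc(S^{1})(m,n)$, and it is closed. The content of the induction is the acyclicity claim: for each $(m,n)$ and each degree $k\geq 1$, the inclusion
\[ \Hom\bigl(\H^{\bullet}(S^{1})^{\otimes m},\H^{\bullet}(S^{1})^{\otimes n}\bigr) \mono \qloc(S^{1})(m,n) \]
is a quasiisomorphism. This decomposes into two quasiisomorphism claims: (a) the $\Omega^{\bullet}(\bR_{>0};-)$-layer is a quasiisomorphism because $\bR_{>0}$ is contractible, and (b) at a fixed $\epsilon$ the complex $\hom\bigl(\Omega^{\bullet}(S^{1})^{\otimes m},\Omega^{\bullet}(S^{1})^{\otimes n}\bigr)$ is quasiisomorphic to its cohomology analogue, which is standard for the de Rham complex of a compact manifold in the nuclear/projective-tensor setting of Remark~\ref{remark.nuclearTVS}. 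Granted this, the closed obstruction $d\mu_{T}$ is exact in the ambient complex because the Frobenius dioperad already acts on $\H^{\bullet}(S^{1})$, so all cohomology-level obstructions vanish; I then produce a quasilocal primitive $\mu_{T}$ by choosing a contracting homotopy whose integral kernel is supported near the diagonals (built from a propagator for $\Omega^{\bullet}(S^{1})$ chosen to have controlled support).

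Uniqueness is the same argument one degree down: homotopies between two quasilocal lifts form a torsor under obstructions in lower degree, and the same acyclicity shows these vanish; iterating gives a contractible space of lifts. The main obstacle is the acyclicity claim together with the insistence that the contracting homotopy respect quasilocality: the abstract quasiisomorphism $\Omega^{\bullet}(S^{1})\simeq\H^{\bullet}(S^{1})$ does not automatically produce support-preserving primitives, so one must construct a propagator $P_{\epsilon}$ on $S^{1}\times S^{1}$ whose support also shrinks to the diagonal as $\epsilon\to 0$, and verify that iterated grafting of $P_{\epsilon}$ into the trees of $\h{\Ff}$ yields well-defined quasilocal families. This is where the restriction to dioperads is essential: because only tree compositions arise, support estimates multiply straightforwardly, whereas any loop would force an integration over $S^{1}$ whose $\epsilon\to 0$ limit need not exist --- exactly the phenomenon that will create a genuine obstruction in the properadic/graph-level version.
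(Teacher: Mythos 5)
Your argument hinges on what you call ``the acyclicity claim'': that the inclusion $\Hom\bigl(\H^{\bullet}(S^{1})^{\otimes m},\H^{\bullet}(S^{1})^{\otimes n}\bigr) \mono \qloc(S^{1})(m,n)$ is a quasiisomorphism. This is false, and the failure is the entire content of the problem. Proposition~\ref{prop.Hqloc} computes that $\H^{\bullet}\qloc(m,n) \cong \H^{\bullet}(S^{1})[1-n]$ is two-dimensional (concentrated in degrees $n-1$ and $n$), whereas $\Hom\bigl(\H^{\bullet}(S^{1})^{\otimes m},\H^{\bullet}(S^{1})^{\otimes n}\bigr)$ has total dimension $2^{m+n}$ and cohomology in every degree from $-m$ to $n$. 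Already for $(m,n)=(1,1)$ the left side is $4$-dimensional and the right side is $2$-dimensional. What \emph{is} true (and is essentially the content of your points~(a) and~(b)) is that $\Hom(\H^{\bullet}(S^{1})^{\otimes m},\H^{\bullet}(S^{1})^{\otimes n}) \mono \Omega^{\bullet}(\bR_{>0};\End(\Omega^{\bullet}(S^{1})))(m,n)$ is a quasiisomorphism --- but that target is the unconstrained operation complex, not $\qloc$. The quasilocality constraint is exactly what collapses its large cohomology to the tiny $\H^{\bullet}(S^{1})[1-n]$; if your acyclicity held, quasilocality would be vacuous and Theorem~\ref{mainthm.properads} could not be true.

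Your next step compounds the problem: you argue that because ``the Frobenius dioperad already acts on $\H^{\bullet}(S^{1})$,'' the obstruction $[\eta(\partial f)]$ vanishes. The obstruction class lives in $\H^{\bullet}\qloc$, and vanishing of its image in $\End(\H^{\bullet}(S^{1}))$ does not force it to vanish in $\H^{\bullet}\qloc$ unless you separately know the map $\H^{\bullet}\qloc(m,n)\to\Hom(\H^{\bullet}(S^{1})^{\otimes m},\H^{\bullet}(S^{1})^{\otimes n})$ is \emph{injective} in the relevant degree --- a much weaker claim than your quasiisomorphism, but one that still needs an argument (and in fact it is what the paper supplies in degree $0$ for $(m,n)=(2,1)$). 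The correct use of Proposition~\ref{prop.Hqloc} is the opposite of what you do: because $\H^{\bullet}\qloc(m,n)$ is concentrated in degrees $n-1$ and $n$, and a generator of $\sh^{\di}\Frob_{1}$ with $m$ inputs sits in degree $2-m$, obstructions can arise only when $m+n=3$ or $4$. This reduces the problem to five explicit generators, which are then handled by choosing the wedge product for $\mult$ (killing the associator obstruction on the nose) and a permutation-invariant smooth kernel $\psi_{\epsilon}$ so that the $\bS_{4}$-equivariance turns the coassociator and Frobeniator obstructions into conjugates of the (already-vanishing) associator one. Your general instinct about trees versus loops is sound and explains \emph{why} a result like this ought to hold at tree level; but the inductive engine you propose does not run, because the vanishing of cohomology that you would feed into it is the very thing that is false.
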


\begin{theorem}\label{mainthm.properads}
  There does not exist a quasilocal lift of the 1-shifted Frobenius algebra structure on $\H^{\bullet}(S^{1})$ to the cochain level if ``homotopy Frobenius algebra'' is interpreted in the sense of {properads}.
\end{theorem}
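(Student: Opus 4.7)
The plan is to assume a quasilocal properad-level lift exists and to extract from the simplest properadic coherence involving a loop an explicit numerical identity on $S^{1}$ that cannot be satisfied. The argument leverages Theorem~\ref{mainthm.dioperads}, which pins down the tree-level data up to homotopy, so the freedom in a putative properad lift lives entirely in the genus-$\geq 1$ generators.

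First I would analyze a cofibrant resolution $\h\Frob$ of the 1-shifted Frobenius properad and isolate the lowest-arity generator not already present in the dioperadic resolution. The natural candidate is a 1-input/1-output genus-1 generator whose properadic differential is the \emph{loop composition} $\mu \circ_{\mathrm{loop}} \Delta$ obtained by gluing both output legs of $\Delta$ in parallel to both input legs of $\mu$ (producing a single cycle). A quasilocal properad action must therefore supply a quasilocal chain homotopy $h$ with $\d h$ equal to this loop composition plus tree-level correction terms fixed by the dioperadic data. Since at cohomology level the loop composition acts on $\H^{\bullet}(S^{1})$ as a specific nonzero endomorphism dictated by the 1-shifted Frobenius structure, this pins down the required cohomological value of the obstruction.

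Passing to the kernel picture of Remark~\ref{remark.nuclearTVS}, the wedge product $\mu$ has a smooth kernel strictly supported on the diagonal, so the loop composition acts on the quasilocal kernel $K_{\Delta}(x;u,v)$ of $\Delta$ by pinching the two output variables: $K_{\mu \circ_{\mathrm{loop}} \Delta}(x;y) = K_{\Delta}(x;y,y)$ up to orientation signs, which is well-defined for each $\epsilon > 0$ because $K_{\Delta}$ is smooth in its output directions. Testing the required identity on the constant function $\1 \in \Omega^{0}(S^{1})$ and pairing the output against $\1$ by integration over $S^{1}$ converts the cochain-level coherence into a numerical identity: an $\epsilon$-family of explicit integrals on $S^{1}\times S^{1}$ must equal the fixed cohomological value identified above.

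The main obstacle is evaluating the $\epsilon \to 0$ behavior of that integral and showing the answer is \emph{universal}. Two quasilocal choices of $\Delta$ differ at fixed $\epsilon$ by a $\d\epsilon$-direction boundary term whose contribution after pinching and pairing against $\1\otimes\1$ vanishes by Stokes, so the value depends only on the cohomology class of $\Delta$ and the choice of smoothing. One is then left with a sequence of explicit one-dimensional integrals of a mollifier on $S^{1}$ against its restriction to the diagonal, which can be evaluated in closed form. Verifying that this universal value disagrees with the cohomological prediction produces the contradiction and proves Theorem~\ref{mainthm.properads}.
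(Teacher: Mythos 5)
Your proposal misidentifies both the generator carrying the obstruction and the value that the obstruction is supposed to take, and either error alone would sink the argument.

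First, there is no 1-input/1-output genus-1 generator in $\sh^{\pr}\Frob_{1}$. Such a generator would correspond to the graph $\graphI$, but this graph vanishes identically in $\Frob_{1}$ (indeed $\Frob_{1}(1,1)=0$ by definition), since the composite would have to transform both trivially and by sign under the $\bS_{2}$-swap of its internal edges. The relation $\graphI=0$ is precisely what makes $\Frob_{1}$ Koszul as a properad (it is part of the ``replacement rule''), and the genus-$1$ summand of the quadratic dual at arity $(1,1)$ is therefore empty. The lowest-arity properadic generators beyond the dioperadic ones are $\graphD$ and $\graphA$ at $(m,n,\beta)=(1,2,1)$ and $(2,1,1)$, and the generator that actually carries the fatal obstruction is $\graphB$, a genus-\emph{two} generator at $(m,n)=(1,1)$.

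Second, your claim that ``the loop composition acts on $\H^{\bullet}(S^{1})$ as a specific nonzero endomorphism'' is false. In $\Frob_{1}$ the loop composite is zero, and you can see this directly on $\H^{\bullet}(S^{1})$: with $\Delta(1)=-1\otimes\omega+\omega\otimes 1$, $\Delta(\omega)=\omega\otimes\omega$, and $\mu=\wedge$, one gets $\mu\circ_{\mathrm{loop}}\Delta(1)=-\omega+\omega=0$ and $\mu\circ_{\mathrm{loop}}\Delta(\omega)=\omega\wedge\omega=0$. The corresponding chain-level composite also vanishes identically once $\mult$ is taken to be the wedge product and $\comult$ is a mollified diagonal with smooth $1$-form kernel $\Phi_{\epsilon}$: the loop produces $\Phi_{\epsilon}(x-v)\,\Phi_{\epsilon}(x-v)$, which is zero for degree reasons. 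This is exactly the computation appearing in the paper at equation~(\ref{obstruction for Dgraphbox}), and it is the reason one can \emph{set} $\graphDbox=0$, not the source of a contradiction. So the integral you propose to evaluate would converge to $0$, matching the cohomological prediction, and the argument goes nowhere.

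The actual obstruction lives one step further out, at the genus-two generator $\graphB$, whose cobar differential, equation~(\ref{Bderivative}), is a sum of compositions involving the genus-one generators and a term where the associativity and coassociativity homotopies are glued along two parallel edges. To evaluate it one must first fix compatible quasilocal choices for all the lower generators (associativity homotopy set to zero, explicit formulas involving $F_{\epsilon}$ and $\Theta$ for the Frobenius and coassociativity homotopies), and only then is the numerical obstruction extracted via Stokes' theorem as $\int_{x'}\int_{y'}\Theta(y'-x')\,\mu_{\epsilon}(x',y')=-\tfrac{1}{12}$. That calculation has no counterpart in your proposal.
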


  Before getting to the proofs we assemble the main ingredients.  First, in Section~\ref{section.diproperads} we  discuss dioperads and properads and calculate the cohomology of the space of quasilocal operations.  In Section~\ref{section.Koszul} we discuss Koszul duality and calculate (the beginning of) a presentation of the notion of ``homotopy Frobenius algebra'' in both the dioperadic and properadic senses.  Section~\ref{section.mainthm1} contains the proof of Theorem~\ref{mainthm.dioperads}; it also recalls some obstruction theory that seems to be fairly standard but does not seem to be well-recorded in the literature.  The heart of the paper is Section~\ref{section.mainthm2}, which consists of a proof of Theorem~\ref{mainthm.properads}.  The proof is calculation-intensive: we will show that a certain explicit obstruction fails to vanish, and this requires evaluating certain explicit integrals.  In Section~\ref{section.conclusion}, we provide an entirely different series of calculations, 
  this time working not with smooth de Rham forms but with cellular cochains for a fine cell devision of $S^{1}$, 
  and arrive (in Theorem~\ref{discrete thm.properad}) at the same obstruction.  We draw from Theorems~\ref{mainthm.properads} and~\ref{discrete thm.properad} an important general restriction on cochain-level homotopy Frobenius algebras in Conclusion~\ref{meta}: they are necessarily not strict.

In Theorem~\ref{mainthm.dioperads}, by ``homotopically-unique'' we mean that the space of such lifts is contractible; see Remark~\ref{remark.modelcategorystructure}.
According to~\cite{MR2320654}, a homotopy action in the sense of properads is the same as an action in the sense of PROPs, which are the most general way to control algebraic structures with many-to-many operations.  
Thus Theorem~\ref{mainthm.properads} shows that sometimes the best answer to question~(\ref{question2}) is the surprising ``No, a lift does not exist.''
On the other hand, Theorems~\ref{mainthm.dioperads} and~\ref{mainthm.properads} together illustrate that the choice of whether to work with properads or dioperads really does matter (see also~\cite[Theorem~47]{MR2560406}). 
 As remarked in~\cite{MR1960128}, dioperads seem closely related to the ``cyclic operads'' of~\cite{MR1358617},  another context in which such ``cochain-level Poincar\'e duality'' problems can be posed.
 The dioperadic notion of ``homotopy Frobenius algebra'' also seems closely related to notions in terms of \emph{operadic} homotopy algebras and their homotopy modules.  
 But this author is not aware of a paper making either relation precise.

\begin{remark}
Theorem~\ref{mainthm.dioperads} is also proved in~\cite{PoissonAKSZ} for $S^{1}$ replaced by an arbitrary oriented manifold.  The present paper includes the proof (just for $S^{1}$) in order to be self contained, and also so that we can directly compare the  result with Theorem~\ref{mainthm.properads}.  An indirect argument for Theorem~\ref{mainthm.properads} is also given in~\cite{PoissonAKSZ}, where the problem of quasilocal homotopy Frobenius algebra structures is related to the problem of wheel-free deformation quantization, which is obstructed \cite{Merkulov07,Dito13,MR3268756}.

Note that the version of $\qloc(X)$ given in~\cite{PoissonAKSZ} is less elegant than Definition~\ref{defn.quasilocal}; the two versions are not isomorphic, but with some work can be shown to be quasiisomorphic.  The reader wishing to generalize Definition~\ref{defn.quasilocal} to non-compact manifolds should also keep in mind that without compactness, one-parameter families of operations will generally fail to eventually live in arbitrarily narrow neighborhoods of $\diag(X)$; one must replace $\bR_{>0}$ with an infinite-dimensional parameter space. 
%
\end{remark}

\section{Dioperads and properads, including \texorpdfstring{$\Frob_{1}$}{Frob_1} and \texorpdfstring{$\qloc$}{qloc}}

\label{section.diproperads}

Dioperads were introduced in~\cite{MR1960128} and properads in~\cite{MR2320654}.  Both provide frameworks in which to axiomatize algebraic structures with many-to-many operations.  Our primary references for the theory of dioperads and properads, including their homotopy theory and Koszul duality, are~\cite{MR2320654,MR2560406,MR2572248}.  The main goal of this section is to recall  in Definition~\ref{defn.diproperad} one of many equivalent presentations of these (di/pr)operads.  
We will also formally introduce the (di/pr)operads $\Frob_{1}$ of one-shifted Frobenius algebras and $\qloc(S^{1})$ of quasilocal operations on $S^{1}$, and we will calculate the cohomology of the latter.
Koszul duality will be discussed Section~\ref{section.Koszul}.

\begin{definition}
Let $\cat{DGVect}$ denote the category of cochain complexes over $\bR$, with cohomological conventions (differentials increase degree).  Cochain complexes may be supported in both positive and negative degrees.  We will be inconsistent about whether to put a raised bullet on an object or not, and understand $V = V^{\bullet}$.

 We use the usual Koszul sign rules --- the canonical isomorphism $V \otimes W \cong W \otimes V$, for $V,W\in \DGVect$, sends $v \otimes w \mapsto (-1)^{(\deg v)(\deg w)}w\otimes v$ if $v$ and $w$ are homogeneous --- and the word ``commutative'' is always relative to the Koszul signs.  Let $[n]$ denote the one-dimensional graded vector space satisfying $\dim [n]^{{-n}} = 1$ and $\dim [n]^{{\bullet}} = 0$ if $\bullet\neq -n$, and  shift chain complexes by $V[n] = V \otimes [n]$, so that $V[n]^{\bullet} \cong V^{\bullet + n}$; as discussed further in Remark~\ref{remark.signs}, this introduces signs to formulas involving homogeneous elements.  

  Let $\bS$ denote the groupoid of finite sets and bijections.  An \define{$\bS$-bimodule} is a functor $P : \bS^{\op}\times \bS \to \DGVect$.  Thus, the data of an $\bS$-bimodule is a collection of cochain complexes $P(m,n)$ for $(m,n)\in \bN^{2}$, along with, for each $(m,n)$, an action on $P(m,n)$ of $\bS_{m}^{\op} \times \bS_{n}$, where $\bS_{n}$ denotes the symmetric group on $n$ letters.
\end{definition}

\begin{remark}\label{remark.signs}
  Sign conventions are annoying.  Since the forgetful functor $\cat{DGVect} \to \cat{Vect}$ is faithful and naturally monoidal, it is standard to work with homogeneous elements, which are precisely the elements in the
  image of a complex under  the forgetful functor.  But the forgetful functor is not symmetric, and this is the root cause of the fundamental fact that \emph{there is no perfect sign convention for homogeneous elements}.  There are many mediocre conventions, some better than others, and precise formulas depend on some choice.  Because one can never be sure another author is using precisely the same conventions, it is dangerous to quote other authors' formulas; for this reason, \cite{SerreHowToWrite} argues that one should distrust most of the literature in homological algebra.  On the other hand, the symmetric structure on $\cat{DGVect}$ is uniquely determined by the condition that the braiding $[1]\otimes[1] \to [1]\otimes[1]$ be $-\id$.  It follows that any categorical statement which itself is convention-independent, if carefully proven in one convention, really is universally true.
  
  This paper attempts to use conventions that are simultaneously standard and  clean.  The absolutely cleanest convention maintaining usual syntax is to change the semantics of formulas, working not with homogeneous elements but with ``generalized elements'' (see~\cite{DM1999}).  Unfortunately, most calculations in this paper really do involve particular homogeneous elements, and so we will use the more usual conventions, which we  record here:
  
  Fix a basis element $\iota \in [1]$ (so that $\deg \iota = -1$).  For a homogeneous vector $v\in V^{\bullet}$, the corresponding vector in $V[1]^{\bullet-1}$ will be denoted $v \otimes \iota = v\iota$.  (If we wrote the suspension functor $V \mapsto V[1]$ on the left instead as $V \mapsto \Sigma V$, we would name the suspended vector $\varsigma v$.)  Upon choosing an isomorphism $[1]\otimes [1] \cong [2]$, we can set the basis vector of $[2]$ to be $\iota^{2}$.  (Note that no such choice is compatible with the braiding $[1]\otimes[1] \to [1]\otimes [1]$.)  Given complexes $V$ and $W$, the isomorphism $(V \otimes W)[2] \cong V[1] \otimes W[1]$ on homogeneous elements is $(v\otimes w)\iota^{2} \mapsto (-1)^{\deg w}(v\iota \otimes w\iota)$, since an $\iota$ had to move past the $w$.  Since $[1]$ has trivial differential, if the complex $V$ has differential $\d_{V}$, then the complex $V[1]$ satisfies $\d_{V[1]}(v\iota) = (\d_{V} v) \iota + (-1)^{\deg v}v(\d_{[1]} \iota) = (\d_{V} v)\iota$.  The differential on $V\otimes W$ is $\d_{V\otimes W}(v\otimes w) = \d_{V} v \otimes w + (-1)^{\deg v}v \otimes \d_{W} w$.  This formula follows from the expression $\d_{V\otimes W} = \d_{V} \otimes \id_{W} + \id_{V} \otimes \d_{W}$ along with the following more general convention: if $f: V \to V'$ and $g : W \to W'$ are homogeneous linear maps, then $(f\otimes g)(v\otimes w) = (-1)^{(\deg g)(\deg v)}f(v) \otimes g(w)$, since the $g$ and $v$ had to switch spots.  Note in particular that $f(v\iota) = f(v)\iota$.  We let $\iota^{-1}$ denote the basis vector of $[-1]$ that satisfies $\iota  \iota^{-1} = 1\in [0] = \bR$.
  A manifestation of the nonexistence of a perfect sign convention is that, once $\iota  \iota^{-1} = 1$ is decided, both choices for $\iota^{-1}\iota = \pm 1$ lead to problems;  fortunately, we will not need that latter multiplication.
\end{remark}

\begin{definition} \label{defn.diproperad}
  A \define{nonunital properad} is an $\bS$-bimodule along with, for every tuple of finite sets $\mathbf{m}_{1},\mathbf{m}_{2},\mathbf{n}_{1},\mathbf{n}_{2},\mathbf{k}$ with $\mathbf{k}$ nonempty, a \define{composition} map:
  $$
  \begin{tikzpicture}[baseline=(basepoint),yscale=1.25,xscale=-1]
  \path(0,.25) coordinate (basepoint);
  \draw (-.5,.5) node[circle,draw,inner sep=3,fill=gray] (delta1) {};
  \draw (.5,0) node[circle,draw,inner sep=3,fill=gray] (delta2) {};
  \draw[onearrow]  (-.25,-.5) .. controls +(0,.25) and +(.25,-.5) .. (delta1);
  \draw[onearrow] (-.75,-.5) .. controls +(0,.25) and +(-.25,-.25) .. (delta1);
  \draw[onearrow]  (.25,-.5) .. controls +(0,.25) and +(-.25,-.25) .. (delta2);
  \draw[onearrow] (.75,-.5) .. controls +(0,.25) and +(.25,-.25) .. (delta2);
  \draw[onearrow] (delta1) .. controls +(-.25,.25) and +(0,-.25) .. (-.85,1.1);
  \draw[onearrow] (delta1) .. controls +(.25,.25) and +(0,-.25) .. (-.15,1.1);
  \draw[onearrow] (delta2) .. controls +(-.25,.5) and +(0,-.25) .. (.15,1.1);
  \draw[onearrow] (delta2) .. controls +(.25,.25) and +(0,-.25) .. (.85,1.1);
  \draw[] (-.5,1) node {$\scriptstyle \dots$} (.5,1) node {$\scriptstyle \dots$};
  \draw[] (-.5,-.35) node {$\scriptstyle \dots$} (.5,-.35) node {$\scriptstyle \dots$};
  \draw[] (delta2) ..controls +(-.5,.125) and +(.25,-.25) .. (delta1);
  \draw[] (delta2) ..controls +(-.25,.25) and +(.5,-.125) .. (delta1);
  \draw[] (-.05,.25) node[anchor=base] {$\scriptscriptstyle \cdots$};
    \draw[decorate,decoration={brace,mirror}] (-.9,1.15) -- node[auto,swap] {$\scriptstyle \mathbf m_{2}$} (-.1,1.15);
    \draw[decorate,decoration={brace,mirror}] (.1,1.15) -- node[auto,swap] {$\scriptstyle \mathbf m_{1}$} (.9,1.15);
    \draw[decorate,decoration={brace,mirror}] (-.2,-.55) -- node[auto,swap] {$\scriptstyle \mathbf n_{2}$} (-.8,-.55);
    \draw[decorate,decoration={brace,mirror}] (.8,-.55) -- node[auto,swap] {$\scriptstyle \mathbf n_{1}$} (.2,-.55);
    \draw[decorate,decoration={brace,mirror}] (.25,.35) -- node[anchor=north] {$\scriptstyle \mathbf k$} (-.2,.15);
\end{tikzpicture}
:
   P\bigl(\mathbf m_{1} \sqcup \mathbf k,\mathbf n_{1}\bigr) \otimes  P\bigl(\mathbf m_{2},\mathbf k \sqcup \mathbf n_{2} \bigr) \to P\bigl(\mathbf m_{1} \sqcup \mathbf m_{2}, \mathbf n_{1} \sqcup \mathbf n_{2} \bigr) $$
   The order is chosen to be compatible with the usual order of composition of functions.
   
   The composition maps should 
   intertwine the various $\bS$-actions
   in the obvious way from the picture.  Moreover, we demand an \define{associativity} condition, which is actually four conditions for the types of connected directed  graphs with three vertices and no directed cycles:
  $$
  \begin{tikzpicture}[baseline=(v2.base),xscale=-1]
    \path (0,0) node[draw,circle,inner sep = 1pt] (v1) {$v_{3}$};
    \path (.7,.7) node[draw,circle,inner sep = 1pt] (v2) {$v_{2}$};
    \path (-.7,1.4) node[draw,circle,inner sep = 1pt] (v3) {$v_{1}$};
    \draw[tensor,onearrow] (v1) -- (v2);
    \draw[tensor,onearrow] (v1) -- (v3);
    \draw[tensor,onearrow] (v2) -- (v3);
  \end{tikzpicture}\,,\quad
  \begin{tikzpicture}[baseline=(v2.base),xscale=-1]
    \path (0,0) node[draw,circle,inner sep = 1pt] (v1) {$v_{3}$};
    \path (.7,.7) node[draw,circle,inner sep = 1pt] (v2) {$v_{2}$};
    \path (-.7,1.4) node[draw,circle,inner sep = 1pt] (v3) {$v_{1}$};
    \draw[tensor,onearrow] (v1) -- (v2);
    \draw[tensor,onearrow] (v2) -- (v3);
  \end{tikzpicture}\,,\quad
  \begin{tikzpicture}[baseline=(v2.base),xscale=-1]
    \path (0,0) node[draw,circle,inner sep = 1pt] (v1) {$v_{3}$};
    \path (.7,.7) node[draw,circle,inner sep = 1pt] (v2) {$v_{2}$};
    \path (-.7,1.4) node[draw,circle,inner sep = 1pt] (v3) {$v_{1}$};
    \draw[tensor,onearrow] (v1) -- (v3);
    \draw[tensor,onearrow] (v2) -- (v3);
  \end{tikzpicture}\,,\quad
  \begin{tikzpicture}[baseline=(v2.base),xscale=-1]
    \path (0,0) node[draw,circle,inner sep = 1pt] (v1) {$v_{3}$};
    \path (.7,.7) node[draw,circle,inner sep = 1pt] (v2) {$v_{2}$};
    \path (-.7,1.4) node[draw,circle,inner sep = 1pt] (v3) {$v_{1}$};
    \draw[tensor,onearrow] (v1) -- (v2);
    \draw[tensor,onearrow] (v1) -- (v3);
  \end{tikzpicture}
  $$
  The reader is invited to spell out the details of the associativity equations; note that for the last two, one must reverse the order of two factors in a tensor product, and this introduces signs.
  
  A \define{nonunital dioperad} is as above, but the only compositions that are defined are when $\mathbf k$ is a set of size $1$:
  $$
  \begin{tikzpicture}[baseline=(basepoint),yscale=1.25,xscale=-1]
  \path(0,.25) coordinate (basepoint);
  \draw (-.5,.5) node[circle,draw,inner sep=3,fill=gray] (delta1) {};
  \draw (.5,0) node[circle,draw,inner sep=3,fill=gray] (delta2) {};
  \draw[onearrow]  (-.25,-.5) .. controls +(0,.25) and +(.25,-.5) .. (delta1);
  \draw[onearrow] (-.75,-.5) .. controls +(0,.25) and +(-.25,-.25) .. (delta1);
  \draw[onearrow]  (.25,-.5) .. controls +(0,.25) and +(-.25,-.25) .. (delta2);
  \draw[onearrow] (.75,-.5) .. controls +(0,.25) and +(.25,-.25) .. (delta2);
  \draw[onearrow] (delta1) .. controls +(-.25,.25) and +(0,-.25) .. (-.85,1.1);
  \draw[onearrow] (delta1) .. controls +(.25,.25) and +(0,-.25) .. (-.15,1.1);
  \draw[onearrow] (delta2) .. controls +(-.25,.5) and +(0,-.25) .. (.15,1.1);
  \draw[onearrow] (delta2) .. controls +(.25,.25) and +(0,-.25) .. (.85,1.1);
  \draw[] (-.5,1) node {$\scriptstyle \dots$} (.5,1) node {$\scriptstyle \dots$};
  \draw[] (-.5,-.35) node {$\scriptstyle \dots$} (.5,-.35) node {$\scriptstyle \dots$};
  \draw[onearrow] (delta2) -- (delta1);
    \draw[decorate,decoration={brace,mirror}] (-.9,1.15) -- node[auto,swap] {$\scriptstyle \mathbf m_{2}$} (-.1,1.15);
    \draw[decorate,decoration={brace,mirror}] (.1,1.15) -- node[auto,swap] {$\scriptstyle \mathbf m_{1}$} (.9,1.15);
    \draw[decorate,decoration={brace,mirror}] (-.2,-.55) -- node[auto,swap] {$\scriptstyle \mathbf n_{2}$} (-.8,-.55);
    \draw[decorate,decoration={brace,mirror}] (.8,-.55) -- node[auto,swap] {$\scriptstyle \mathbf n_{1}$} (.2,-.55);
\end{tikzpicture}
:
   P\bigl(\mathbf m_{1} \sqcup \{*\},\mathbf n_{1}\bigr) \otimes P\bigl(\mathbf m_{2}, \{*\} \sqcup\mathbf n_{2}\bigr)  \to P\bigl(\mathbf m_{1} \sqcup \mathbf m_{2}, \mathbf n_{1} \sqcup \mathbf n_{2} \bigr) $$
   There are associativity axioms for each of the following types of diagrams:
  $$
  \begin{tikzpicture}[baseline=(v2.base),xscale=-1]
    \path (0,0) node[draw,circle,inner sep = 1pt] (v1) {$v_{3}$};
    \path (.7,.7) node[draw,circle,inner sep = 1pt] (v2) {$v_{2}$};
    \path (-.7,1.4) node[draw,circle,inner sep = 1pt] (v3) {$v_{1}$};
    \draw[,onearrow] (v1) -- (v2);
    \draw[,onearrow] (v2) -- (v3);
  \end{tikzpicture}\,,\quad
  \begin{tikzpicture}[baseline=(v2.base),xscale=-1]
    \path (0,0) node[draw,circle,inner sep = 1pt] (v1) {$v_{3}$};
    \path (.7,.7) node[draw,circle,inner sep = 1pt] (v2) {$v_{2}$};
    \path (-.7,1.4) node[draw,circle,inner sep = 1pt] (v3) {$v_{1}$};
    \draw[,onearrow] (v1) -- (v3);
    \draw[,onearrow] (v2) -- (v3);
  \end{tikzpicture}\,,\quad
  \begin{tikzpicture}[baseline=(v2.base),xscale=-1]
    \path (0,0) node[draw,circle,inner sep = 1pt] (v1) {$v_{3}$};
    \path (.7,.7) node[draw,circle,inner sep = 1pt] (v2) {$v_{2}$};
    \path (-.7,1.4) node[draw,circle,inner sep = 1pt] (v3) {$v_{1}$};
    \draw[,onearrow] (v1) -- (v2);
    \draw[,onearrow] (v1) -- (v3);
  \end{tikzpicture}
  $$
  
  A \define{nonunital coproperad} has instead a \define{decomposition} map $\Delta: P\bigl(\mathbf m_{1} \sqcup \mathbf m_{2}, \mathbf n_{1} \sqcup \mathbf n_{2} \bigr) \to P\bigl(\mathbf m_{1},\mathbf k \sqcup \mathbf n_{1} \bigr) \otimes P\bigl(\mathbf m_{2} \sqcup \mathbf k,\mathbf n_{2}\bigr) $ for each tuple $(\mathbf m_{1},\mathbf m_{2},\mathbf n_{1},\mathbf n_{2},\mathbf k)$ with $\mathbf k \neq \emptyset$, 
  intertwining the $\bS$-actions and
  satisfying coassociativity axioms.  A \define{nonunital codioperad} similarly has decomposition maps whenever $\mathbf k = \{*\}$.

\end{definition}

  We will not use PROPs in this paper, but mention them for completeness: in addition to the properadic compositions, they allow compositions for disconnected graphs.  
  We henceforth drop the word ``non(co)unital,'' understanding in the sequel that (co)(di/pr)operads may be non(co)unital.

\begin{remark}\label{remark.modelcategorystructure}
  The category of (di/pr)operads has a model category structure in which the weak equivalences are the quasiisomorphisms, and the fibrations are the surjections~\cite[Appendix~A]{MR2572248}.  
  General cofibrations are a bit far to characterize (beyond their definition in terms of the left lifting property), but it is known~\cite[Corollary~40]{MR2572248} that \define{quasifree} (di/pr)operads --- the ones that would be free if one were to forget their differentials --- are cofibrant, and this class of cofibrants suffices for most purposes.
  
  Abstract nonsense of model categories guarantees that if $\h_{1}P \cover P$ and $\h_{2}P \cover P$ are any two cofibrant replacements of the same (di/pr)operad $P$, then the space of maps $\h_{1}P \to \h_{2}P$ covering the identity on $P$ is contractible, and in particular $\h_{1}P$ and $\h_{2}P$ are quasiisomorphic. 

By definition, if $P$ and $Q$ are (di/pr)operads, the \define{space of maps $P \to Q$} is the simplicial set whose $k$-simplices are homomorphisms $P \to Q \otimes \bR[\Delta^{k}]$, where $\bR[\Delta^{k}] = \bR[t_{0},\dots,t_{k},\d t_{0},\dots,\d t_{k}] / \bigl\<\sum t_{i} = 1,\, \sum \d t_{i} = 0\bigr\>$ is the commutative dg algebra of polynomial de Rham forms on the $k$-dimensional simplex.  If $P$ is cofibrant, then this simplicial set satisfies the Kan horn-filling condition.  By convention, a space is \define{contractible} if it has the homotopy type of $\{*\}$, which in particular includes that it is nonempty.  Some further remarks about the space of maps between two (di/pr)operads are in the discussion of the basic facts of obstruction theory at the start of Section~\ref{section.mainthm1}.

There is a forgetful functor from properads to dioperads, whose left adjoint defines the \define{universal enveloping properad} of a dioperad.  The reader should be warned that these functors are known  to be not exact~\cite[Theorem~47]{MR2560406}.
For comparison, in \cite{MR2320654} it is shown that the forgetful functor from PROPs to properads and its adjoint constructing the universal enveloping PROP of a properad are exact.
\end{remark}

\begin{definition} \label{defn.action}
  For any $V \in \DGVect$, the (di/pr)operad $\End(V)$ satisfies $\End(V)(\mathbf m,\mathbf n) = \hom(V^{\otimes \mathbf m},V^{\otimes \mathbf n})$.  An \define{action} of a (di/pr)operad $P$ on $V$ is a homomorphism $P \to \End(V)$.  If $V$ is equipped with an action of $P$, then we will call $V$ a \define{$P$-algebra}.
  
  A \define{homotopy $P$-action on $V$} is an action on $V$ by any cofibrant replacement $\h P \cover P$.  By  Remark~\ref{remark.modelcategorystructure}, the choice of cofibrant replacement is irrelevant up to contractible spaces of choices.  Since the universal enveloping and forgetful functors between dioperads and properads are not exact, the notion of ``homotopy $P$-action'' depends on whether $P$ is treated as a properad or a dioperad.  On the other hand, since the universal enveloping and forgetful functors between properads and PROPs are exact,  properadic homotopy actions always extend to PROPic homotopy actions.
\end{definition}

We will focus our attention on actions of the $1$-shifted Frobenius (di/pr)operad:

\begin{definition}\label{defn.frob1}
  The  (di/pr)operad $\Frob_{1}$ of \define{open and coopen $1$-shifted commutative Frobenius algebras} satisfies:
  $$ \dim \Frob_{1}(m,n)^{{\bullet}} = \begin{cases} 1, & m,n > 0,\ (m,n)\neq (1,1), \text{ and } \bullet = n-1, \\ 0, & mn \leq 1 \text{ or } \bullet \neq n-1.\end{cases} $$
  The $\bS_{m}$ action on $\Frob_{1}(m,n)$ is trivial, whereas $\bS_{n}$ acts via the sign representation.  
  Let $e_{m,n}$ denote a basis element of $\Frob_{1}(m,n)^{n-1}$.  For degree reasons, the only non-zero compositions correspond to trees and are of the form $e_{m_{1},n_{1}}\otimes e_{m_{2},n_{2}} \mapsto (\#)e_{m_{1} + m_{2} - 1, n_{1} + n_{2} - 1}$ for some coefficients $(\#)$.  To set those numbers, we declare:
  $$   
  \begin{tikzpicture}[baseline=(basepoint),yscale=1.25,xscale=-1]
  \path(0,.25) coordinate (basepoint);
  \draw (-.5,.5) node[circle,draw,inner sep=3,fill=gray] (delta1) {};
  \draw (.5,0) node[circle,draw,inner sep=3,fill=gray] (delta2) {};
  \draw[onearrow]  (-.25,-.5) .. controls +(0,.25) and +(.25,-.5) .. (delta1);
  \draw[onearrow] (-.75,-.5) .. controls +(0,.25) and +(-.25,-.25) .. (delta1);
  \draw[onearrow]  (.25,-.5) .. controls +(0,.25) and +(-.25,-.25) .. (delta2);
  \draw[onearrow] (.75,-.5) .. controls +(0,.25) and +(.25,-.25) .. (delta2);
  \draw[onearrow] (delta1) .. controls +(-.25,.25) and +(0,-.25) .. (-.85,1.1);
  \draw[onearrow] (delta1) .. controls +(.25,.25) and +(0,-.25) .. (-.15,1.1);
  \draw[onearrow] (delta2) .. controls +(-.25,.5) and +(0,-.25) .. (.15,1.1);
  \draw[onearrow] (delta2) .. controls +(.25,.25) and +(0,-.25) .. (.85,1.1);
  \draw[] (-.5,1) node {$\scriptstyle \dots$} (.5,1) node {$\scriptstyle \dots$};
  \draw[] (-.5,-.35) node {$\scriptstyle \dots$} (.5,-.35) node {$\scriptstyle \dots$};
  \draw[onearrow] (delta2) -- (delta1);
    \draw[decorate,decoration={brace,mirror}] (-.9,1.15) -- node[auto,swap] {$\scriptstyle  m_{2}$} (-.1,1.15);
    \draw[decorate,decoration={brace,mirror}] (.1,1.15) -- node[auto,swap] {$\scriptstyle  m_{1}$} (.9,1.15);
    \draw[decorate,decoration={brace,mirror}] (-.2,-.55) -- node[auto,swap] {$\scriptstyle  n_{2}$} (-.8,-.55);
    \draw[decorate,decoration={brace,mirror}] (.8,-.55) -- node[auto,swap] {$\scriptstyle  n_{1}$} (.2,-.55);
\end{tikzpicture}
:
  e_{m_{1}+1,n_{1}} \otimes e_{m_{2},n_{2}+1} \mapsto e_{m_{1}+m_{2},n_{1}+n_{2}}$$
\end{definition}  
  Rather than writing the basis vectors $e_{m,n}$, it will be convenient to adopt a more graphical notation.  We set
  $$ e_{m,n} = \tikz[baseline=(basepoint)] {
    \path (0,0) coordinate (basepoint) (0,4pt) node[dot] {}  
    (0,-7pt) node{$\scriptstyle \dots$} (0,15pt) node{$\scriptstyle \dots$};
    \draw[onearrow] (-8pt,-8pt) -- (0,4pt);
    \draw[onearrow] (8pt,-8pt) -- (0,4pt);
    \draw[onearrow] (0,4pt) -- (-8pt,16pt);
    \draw[onearrow] (0,4pt) -- (8pt,16pt);
    \draw[decorate,decoration=brace] (-9pt,18pt) -- node[auto] {$\scriptstyle m$} (9pt,18pt);
    \draw[decorate,decoration=brace] (9pt,-10pt) -- node[auto] {$\scriptstyle n$} (-9pt,-10pt);
   }, $$
   whence the above multiplication rule becomes:
$$  \begin{tikzpicture}[baseline=(basepoint),yscale=1.25]
  \path(0,.1) coordinate (basepoint);
  \draw (.5,.5) node[dot] (delta1) {};
  \draw (-.5,0) node[dot] (delta2) {};
  \draw[onearrow]  (.25,-.5) .. controls +(0,.25) and +(-.25,-.5) .. (delta1);
  \draw[onearrow] (.75,-.5) .. controls +(0,.25) and +(.25,-.25) .. (delta1);
  \draw[onearrow]  (-.25,-.5) .. controls +(0,.25) and +(.25,-.25) .. (delta2);
  \draw[onearrow] (-.75,-.5) .. controls +(0,.25) and +(-.25,-.25) .. (delta2);
  \draw[onearrow] (delta1) .. controls +(.25,.25) and +(0,-.25) .. (.85,1.1);
  \draw[onearrow] (delta1) .. controls +(-.25,.25) and +(0,-.25) .. (.15,1.1);
  \draw[onearrow] (delta2) .. controls +(.25,.5) and +(0,-.25) .. (-.15,1.1);
  \draw[onearrow] (delta2) .. controls +(-.25,.25) and +(0,-.25) .. (-.85,1.1);
  \draw[] (-.45,1) node {$\scriptstyle \dots$} (.55,1) node {$\scriptstyle \dots$};
  \draw[] (-.45,-.35) node {$\scriptstyle \dots$} (.55,-.35) node {$\scriptstyle \dots$};
  \draw[onearrow] (delta2) -- (delta1);
    \draw[decorate,decoration=brace] (-.9,1.15) -- node[auto] {$\scriptstyle m_{1}$} (-.1,1.15);
    \draw[decorate,decoration=brace]  (.1,1.15) -- node[auto] {$\scriptstyle m_{2}$} (.9,1.15);
    \draw[decorate,decoration=brace] (-.2,-.55) -- node[auto] {$\scriptstyle n_{1}$} (-.8,-.55);
    \draw[decorate,decoration=brace]  (.8,-.55) -- node[auto] {$\scriptstyle n_{2}$} (.2,-.55);
 \end{tikzpicture}
   = 
   \tikz[baseline=(basepoint)] {
    \path (0,0) coordinate (basepoint) (0,4pt) node[dot] {}  
    (0,-7pt) node{$\scriptstyle \dots$} (0,15pt) node{$\scriptstyle \dots$};
    \draw[onearrow] (-8pt,-8pt) -- (0,4pt);
    \draw[onearrow] (8pt,-8pt) -- (0,4pt);
    \draw[onearrow] (0,4pt) -- (-8pt,16pt);
    \draw[onearrow] (0,4pt) -- (8pt,16pt);
    \draw[decorate,decoration=brace] (-9pt,18pt) -- node[auto] {$\scriptstyle m_{1}+m_{2}$} (9pt,18pt);
    \draw[decorate,decoration=brace] (9pt,-10pt) -- node[auto] {$\scriptstyle n_{1}+n_{2}$} (-9pt,-10pt);
   } 
     , 
     \quad\quad
     \begin{tikzpicture}[baseline=(basepoint),yscale=1.25]
  \path(0,.1) coordinate (basepoint);
  \draw (.5,.5) node[dot] (delta1) {};
  \draw (-.5,0) node[dot] (delta2) {};
  \draw[onearrow]  (.25,-.5) .. controls +(0,.25) and +(-.25,-.5) .. (delta1);
  \draw[onearrow] (.75,-.5) .. controls +(0,.25) and +(.25,-.25) .. (delta1);
  \draw[onearrow]  (-.25,-.5) .. controls +(0,.25) and +(.25,-.25) .. (delta2);
  \draw[onearrow] (-.75,-.5) .. controls +(0,.25) and +(-.25,-.25) .. (delta2);
  \draw[onearrow] (delta1) .. controls +(.25,.25) and +(0,-.25) .. (.85,1.1);
  \draw[onearrow] (delta1) .. controls +(-.25,.25) and +(0,-.25) .. (.15,1.1);
  \draw[onearrow] (delta2) .. controls +(.25,.5) and +(0,-.25) .. (-.15,1.1);
  \draw[onearrow] (delta2) .. controls +(-.25,.25) and +(0,-.25) .. (-.85,1.1);
  \draw[] (-.45,1) node {$\scriptstyle \dots$} (.55,1) node {$\scriptstyle \dots$};
  \draw[] (-.45,-.35) node {$\scriptstyle \dots$} (.55,-.35) node {$\scriptstyle \dots$};
  \draw[] (delta2) ..controls +(.5,.125) and +(-.25,-.25) .. (delta1);
  \draw[] (delta2) ..controls +(.25,.25) and +(-.5,-.125) .. (delta1);
  \draw[] (.1,.25) node[anchor=base] {$\scriptscriptstyle \cdots$};
    \draw[decorate,decoration=brace] (-.9,1.15) -- node[auto] {$\scriptstyle m_{1}$} (-.1,1.15);
    \draw[decorate,decoration=brace]  (.1,1.15) -- node[auto] {$\scriptstyle m_{2}$} (.9,1.15);
    \draw[decorate,decoration=brace] (-.2,-.55) -- node[auto] {$\scriptstyle n_{1}$} (-.8,-.55);
    \draw[decorate,decoration=brace]  (.8,-.55) -- node[auto] {$\scriptstyle n_{2}$} (.2,-.55);
    \draw[decorate,decoration=brace] (.25,.15) -- node[anchor=north] {$\scriptstyle  k$} (-.2,.35);
 \end{tikzpicture}
 = 0 \text{ if } k > 1.
$$

   Other compositions can be computed using the symmetric group actions.  Permutations of the incoming strands act trivially, but permutations of the outgoing strands lead to signs.  For example, tracking the actions of both $\bS_{n_{1}+1}$ on the top vertex and $\bS_{n_{1}+n_{2}}$ on the resulting composition gives:
   $$
 \begin{tikzpicture}[baseline=(basepoint),yscale=1.25]
  \path(0,.1) coordinate (basepoint);
  \draw (-.5,.5) node[dot] (delta1) {};
  \draw (.5,0) node[dot] (delta2) {};
  \draw[onearrow]  (-.25,-.5) .. controls +(0,.25) and +(.25,-.5) .. (delta1);
  \draw[onearrow] (-.75,-.5) .. controls +(0,.25) and +(-.25,-.25) .. (delta1);
  \draw[onearrow]  (.25,-.5) .. controls +(0,.25) and +(-.25,-.25) .. (delta2);
  \draw[onearrow] (.75,-.5) .. controls +(0,.25) and +(.25,-.25) .. (delta2);
  \draw[onearrow] (delta1) .. controls +(-.25,.25) and +(0,-.25) .. (-.85,1.1);
  \draw[onearrow] (delta1) .. controls +(.25,.25) and +(0,-.25) .. (-.15,1.1);
  \draw[onearrow] (delta2) .. controls +(-.25,.5) and +(0,-.25) .. (.15,1.1);
  \draw[onearrow] (delta2) .. controls +(.25,.25) and +(0,-.25) .. (.85,1.1);
  \draw[] (-.45,1) node {$\scriptstyle \dots$} (.55,1) node {$\scriptstyle \dots$};
  \draw[] (-.45,-.35) node {$\scriptstyle \dots$} (.55,-.35) node {$\scriptstyle \dots$};
  \draw[onearrow] (delta2) -- (delta1);
    \draw[decorate,decoration=brace] (-.9,1.15) -- node[auto] {$\scriptstyle m_{1}$} (-.1,1.15);
    \draw[decorate,decoration=brace] (.1,1.15) -- node[auto] {$\scriptstyle m_{2}$} (.9,1.15);
    \draw[decorate,decoration=brace] (-.2,-.55) -- node[auto] {$\scriptstyle n_{1}$} (-.8,-.55);
    \draw[decorate,decoration=brace] (.8,-.55) -- node[auto] {$\scriptstyle n_{2}$} (.2,-.55);
 \end{tikzpicture}
   = 
   (-1)^{n_{1}(n_{2}-1)}
   \tikz[baseline=(basepoint)] {
    \path (0,0) coordinate (basepoint) (0,4pt) node[dot] {}  
    (0,-7pt) node{$\scriptstyle \dots$} (0,15pt) node{$\scriptstyle \dots$};
    \draw[onearrow] (-8pt,-8pt) -- (0,4pt);
    \draw[onearrow] (8pt,-8pt) -- (0,4pt);
    \draw[onearrow] (0,4pt) -- (-8pt,16pt);
    \draw[onearrow] (0,4pt) -- (8pt,16pt);
    \draw[decorate,decoration=brace] (-9pt,18pt) -- node[auto] {$\scriptstyle m_{1}+m_{2}$} (9pt,18pt);
    \draw[decorate,decoration=brace] (9pt,-10pt) -- node[auto] {$\scriptstyle n_{1}+n_{2}$} (-9pt,-10pt);
   }   $$
   
   The reader should check that these signs satisfy the appropriate associativity from Definition~\ref{defn.diproperad}.  In the conventions from Remark~\ref{remark.signs}, signs occur whenever two vertices both with an even number of outputs switch heights.
   
\begin{example}
  Our goal is to study the Frobenius algebra structure on $\H^{\bullet}(S^{1})$, and so it is worth checking that the signs in Definition~\ref{defn.frob1} reproduce the usual ones.  The wedge multiplication is commutative and associative, and so setting $\mult = \wedge$ does work:
  $$   \,\tikz[baseline=(basepoint)]{ 
    \path (0,3pt) coordinate (basepoint) (0,1pt) node[dot] {} (-8pt,11pt) node[dot] {};
    \draw[](0,-9pt) -- (0,1pt);
    \draw[](0,1pt) -- (-8pt,11pt);
    \draw[](-8pt,11pt) -- (-12pt,21pt);
    \draw[](-8pt,11pt) -- (0pt,21pt);
    \draw[](0,1pt) -- (12pt,21pt);
  }\,
  =
  \,\tikz[baseline=(basepoint)]{ 
    \path (0,3pt) coordinate (basepoint) (0,6pt) node[dot] {} ;
    \draw[](0,-9pt) -- (0,6pt);
    \draw[](0,6pt) -- (-12pt,21pt);
    \draw[](0,6pt) -- (0pt,21pt);
    \draw[](0,6pt) -- (12pt,21pt);
  }\,
  =
  \,\tikz[baseline=(basepoint)]{ 
    \path (0,3pt) coordinate (basepoint) (0,1pt) node[dot] {} (8pt,11pt) node[dot] {};
    \draw[](0,-9pt) -- (0,1pt);
    \draw[](0,1pt) -- (8pt,11pt);
    \draw[](0,1pt) -- (-12pt,21pt);
    \draw[](8pt,11pt) -- (0pt,21pt);
    \draw[](8pt,11pt) -- (12pt,21pt);
  }\, : \H^{\bullet}(S^{1})^{\otimes 3} \to \H^{\bullet}(S^{1})$$
  
  What about comultiplication?  Let $1\in \H^{0}(S^{1})$ denote the monoidal unit and $\omega \in \H^{1}(S^{1})$ the class of any volume form of total volume $1$.  Then the comultiplication should satisfy $\Delta(\omega) = \omega \otimes \omega \in \H^{\bullet}(S^{1}) \otimes \H^{\bullet}(S^{1})$, and so
  $$ \Delta(\omega) \otimes \omega = \omega \otimes \omega \otimes \omega = \omega \otimes \Delta(\omega). $$
  The left-hand side is nothing but $\bigl((\Delta \otimes \id)\circ \Delta\bigr) (\omega)$.  But the right-hand side is $-\bigl( (\id \otimes \Delta) \circ \Delta\bigr)(\omega) = -(\id \otimes \Delta)(\omega \otimes\omega)$ in the conventions from Remark~\ref{remark.signs}, since the $\Delta$ must pass an $\omega$, and both are of odd degree.  This explains the sign in
  $$  
  \,\tikz[baseline=(basepoint)]{ 
    \path (0,3pt) coordinate (basepoint) (-8pt,1pt) node[dot] {} (0,11pt) node[dot] {};
    \draw[](12pt,-9pt) -- (0,11pt);
    \draw[](-12pt,-9pt) -- (-8pt,1pt);
    \draw[](0pt,-9pt) -- (-8pt,1pt);
    \draw[](-8pt,1pt) -- (0,11pt);
    \draw[](0,11pt) -- (0,21pt);
  }\,  =
  \,\tikz[baseline=(basepoint)]{ 
    \path (0,3pt) coordinate (basepoint) (0,6pt) node[dot] {};
    \draw[](-12pt,-9pt) -- (0,6pt);
    \draw[](0pt,-9pt) -- (0,6pt);
    \draw[](12pt,-9pt) -- (0,6pt);
    \draw[](0,6pt) -- (0,21pt);
  }\,
  = -
  \,\tikz[baseline=(basepoint)]{ 
    \path (0,3pt) coordinate (basepoint) (8pt,1pt) node[dot] {} (0,11pt) node[dot] {};
    \draw[](-12pt,-9pt) -- (0,11pt);
    \draw[](0pt,-9pt) -- (8pt,1pt);
    \draw[](12pt,-9pt) -- (8pt,1pt);
    \draw[](8pt,1pt) -- (0,11pt);
    \draw[](0,11pt) -- (0,21pt);
  }\,
.
  $$
  
  What about $\Delta(1)$?  There is no universal way to communicate signs: the map sending $\omega \mapsto \omega \otimes \omega$ and $1 \mapsto 1\otimes\omega + \omega \otimes 1$ is the same data as a map that instead sends $1 \mapsto \pm(1\otimes \omega - \omega \otimes 1)$, since one may always compose such a map by some sign factors that depend on the degrees of the inputs.  To maintain the coassociativity above, we declare $\Delta(1) = -1 \otimes \omega + \omega \otimes 1$, as then
  \begin{align*}
    \bigl( (\id \otimes \Delta) \circ \Delta\bigr)(1) & = (\id \otimes \Delta)(-1 \otimes \omega + \omega \otimes 1) = -1 \otimes \omega \otimes \omega - \omega \otimes (-1 \otimes \omega  + \omega \otimes 1), \\
    \bigl( (\Delta \otimes \id) \circ \Delta\bigr)(1) & = (\Delta \otimes \id) (-1 \otimes \omega + \omega \otimes 1) = -(-1 \otimes \omega + \omega \otimes 1) \otimes \omega + \omega \otimes \omega \otimes 1.
  \end{align*}
  
  Finally, let us check the Frobenius axiom.  We have claimed:
  $$   \,\tikz[baseline=(basepoint)]{ 
    \path (0,3pt) coordinate (basepoint) (0,1pt) node[dot] {} (0,11pt) node[dot] {};
    \draw[](-8pt,-9pt) -- (0,1pt);
    \draw[](8pt,-9pt) -- (0,1pt);
    \draw[](0,1pt) -- (0,11pt);
    \draw[](0,11pt) -- (-8pt,21pt);
    \draw[](0,11pt) -- (8pt,21pt);
  }\,
  =
  \,\tikz[baseline=(basepoint)]{ 
    \path (0,3pt) coordinate (basepoint) (0,6pt) node[dot] {} ;
    \draw[](-8pt,-9pt) -- (0,6pt);
    \draw[](8pt,-9pt) -- (0,6pt);
    \draw[](0,6pt) -- (-8pt,21pt);
    \draw[](0,6pt) -- (8pt,21pt);
  }\,
  =
  \,\tikz[baseline=(basepoint)]{ 
    \path (0,3pt) coordinate (basepoint) (8pt,1pt) node[dot] {} (-8pt,11pt) node[dot] {};
    \draw[](-8pt,-9pt) -- (-8pt,11pt);
    \draw[](8pt,-9pt) -- (8pt,1pt);
    \draw[](8pt,1pt) -- (-8pt,11pt);
    \draw[](-8pt,11pt) -- (-8pt,21pt);
    \draw[](8pt,1pt) -- (8pt,21pt);
  }\,.
  $$
  Both sides vanish when evaluated at $\omega \otimes \omega$ for degree reasons.  For the other possible inputs, the left-hand side is:
   $$ \begin{array} {ccccl}
  1\otimes 1 & \overset\wedge\mapsto &1& \overset\Delta\mapsto & -1 \otimes \omega  + \omega \otimes 1, \\
   1\otimes \omega & \overset\wedge\mapsto &\omega& \overset\Delta\mapsto& \omega \otimes \omega, \\
   \omega \otimes 1 & \overset\wedge\mapsto & \omega& \overset\Delta\mapsto& \omega \otimes \omega .\end{array}$$
  The right-hand side is $-(\id \otimes \wedge) \circ (\Delta \otimes \id)$, which gives:
  $$ \begin{array}{ccccl}
    1\otimes 1 & \overset{\Delta \otimes \id}\mapsto & (-1 \otimes \omega  + \omega \otimes 1)\otimes 1 & \overset{\id \otimes \wedge} \mapsto & -1 \otimes \omega + \omega \otimes 1, \\
    1 \otimes \omega & \overset{\Delta \otimes \id}\mapsto & (-1 \otimes \omega  + \omega \otimes 1)\otimes \omega & \overset{\id \otimes \wedge} \mapsto & 0 + \omega \otimes \omega, \\
    \omega \otimes 1 & \overset{\Delta \otimes \id}\mapsto & \omega \otimes \omega \otimes 1 & \overset{\id \otimes \wedge} \mapsto & \omega \otimes \omega.
  \end{array} $$
  
\end{example}

\begin{remark}\label{remark.openunit}
  The words ``open'' and ``coopen'' in Definition~\ref{defn.frob1} denote that our Frobenius algebras need not have units or counits.  To restore these, one must modify Definition~\ref{defn.diproperad} to require unital (di/pr)operads, and then in Definition~\ref{defn.frob1} simply set $\Frob_{1}(m,n)^{n-1} = \bR$ for all $m,n$.  These changes require more complicated versions of Koszulity and the (co)bar construction than we present in Section~\ref{section.Koszul}; the interested reader may consult~\cite{MR2993002} for details.
\end{remark}

The other (di/pr)operad that will play a role in this paper is $\qloc$ from Definition~\ref{defn.quasilocal}:

\begin{lemma}
  The cochain complexes $\qloc(S^{1})(-,-)$ from Definition~\ref{defn.quasilocal} are naturally a subproperad (and hence a subdioperad) of $\Omega^{\bullet}\bigl(\bR_{>0}; \End(\Omega^{\bullet}(S^{1}))\bigr)$.
\end{lemma}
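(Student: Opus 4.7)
The strategy is to verify that the $\bS$-actions and properadic composition maps on the ambient complex $\Omega^\bullet(\bR_{>0}; \End(\Omega^\bullet(S^1)))$ preserve quasilocality. The ambient properad structure is the obvious one: for each $\epsilon > 0$, one composes operations in $\End(\Omega^\bullet(S^1))$ in the usual way, with the $\Omega^\bullet(\bR_{>0})$ component combined via wedge product, and with $\bS$-actions permuting the input and output factors (with Koszul signs from Remark~\ref{remark.signs}). That this composition is well-defined uses the description in Remark~\ref{remark.nuclearTVS}: operations have integral kernels that are smooth in the output directions, so pairing outputs of $g$ with inputs of $f$ amounts to integrating a smooth function against a distribution, which is unproblematic.

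The $\bS$-action preserves quasilocality trivially, since relabeling coordinates of $(S^1)^m \times (S^1)^n$ carries the diagonal to itself and neighborhoods of the diagonal to neighborhoods of the diagonal. For properadic composition, suppose $f \in \qloc(S^1)(m_1+k,n_1)$ and $g \in \qloc(S^1)(m_2, k+n_2)$. The support of the kernel of $f(\epsilon) \circ_k g(\epsilon)$ is contained in the set of tuples $(x, y) \in (S^1)^{m_1+m_2} \times (S^1)^{n_1+n_2}$ for which there exists $z \in (S^1)^k$ such that the appropriate sub-tuple lies in $\supp f(\epsilon)$ and the complementary one in $\supp g(\epsilon)$. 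Given any open neighborhood $U \supseteq \diag(S^1) \subset (S^1)^{m_1+m_2+n_1+n_2}$, compactness of $S^1$ gives a $\delta > 0$ so that any tuple of $S^1$-coordinates pairwise within distance $\delta$ lies in $U$. Take $U_f, U_g$ to be tubular neighborhoods of the smaller diagonals in which all coordinates are pairwise within distance $\delta/2$; quasilocality of $f$ and $g$ supplies thresholds $\epsilon_f, \epsilon_g$ below which $\supp f(\epsilon) \subseteq U_f$ and $\supp g(\epsilon) \subseteq U_g$.

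For $\epsilon < \min(\epsilon_f, \epsilon_g)$, the shared $z$-coordinates bridge the two partial diagonals: every pair of coordinates in $(x, y)$ can be connected by a two-step path through some $z_j$, each step of length at most $\delta/2$, so all $S^1$-coordinates in $(x,y)$ lie pairwise within distance $\delta$, forcing $(x, y) \in U$. This furnishes the $\epsilon_U$ required by Definition~\ref{defn.quasilocal}. The subdioperad statement is immediate from the subproperad one by restricting properadic composition to the case $\mathbf{k} = \{*\}$. I expect the main obstacle to be the support-of-composition analysis, where one must ensure that distributional pathologies do not enlarge the support beyond its naive set-theoretic description; this is controlled precisely by the smoothness in the output directions, so that composition of kernels reduces to integration of a smooth function against a distribution rather than a product of two distributions.
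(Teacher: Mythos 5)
Your proposal is correct and follows essentially the same route as the paper: reduce quasilocality of a composite to the triangle inequality, using compactness of $S^{1}$ to translate between arbitrary neighborhoods of the diagonal and metric ($\ell$-radius) neighborhoods. The paper states this more tersely but the substance — including the observation that the shared internal coordinates bridge the two partial diagonals — is identical.
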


\begin{proof}
   Note that for any dg algebraic object --- algebra, operad, properad --- $P$ and any manifold~$Y$, $\Omega^{\bullet}(Y;P)$ is a dg algebraic object of the same type as $P$: composition in $\Omega^{\bullet}(Y;P)$ is a combination of wedge product in $\Omega^{\bullet}(Y)$ and composition in $P$.  
   In particular, $\Omega^{\bullet}\bigl(\bR_{>0}; \End(\Omega^{\bullet}(S^{1}))\bigr)$ really is a properad.  Moreover,
   the complexes $\qloc(S^{1})(-,-)$ are clearly a sub-$\bS$-bimodule of $\Omega^{\bullet}\bigl(\bR_{>0}; \End(\Omega^{\bullet}(S^{1}))\bigr)$.  
   
   It thus suffices to check that $\qloc(S^{1})$ is closed under properadic composition.  This follows from the triangle inequality along with the following characterization of when an element $f\in \Omega^{\bullet}\bigl(\bR_{>0}; \End(\Omega^{\bullet}(S^{1}))\bigr)$ is in $\qloc(S^{1})(-,-)$: for each $\ell > 0$, there exists $\epsilon>0$ such that $f|_{(0,\epsilon)}$ takes values in those operations supported in an $\ell$-radius neighborhood of $\diag(S^{1})$.
\end{proof}

\begin{remark}
  It is important that we are using properadic composition, and not the more general ``disconnected'' compositions allowed in PROPs, as for those we would have no triangle to call on.  Indeed, $\End(\Omega^{\bullet}(S^{1}))$ is a PROP, but $\qloc(S^{1})$ is not a sub-PROP, only a sub-properad.
\end{remark}

Note that the inclusion $\End(\Omega^{\bullet}(S^{1})) \to \Omega^{\bullet}\bigl(\bR_{>0}; \End(\Omega^{\bullet}(S^{1}))\bigr)$ as the constant $0$-forms is a quasiisomorphism.  It has as a homotopy inverse the map $\Omega^{\bullet}\bigl(\bR_{>0}; \End(\Omega^{\bullet}(S^{1}))\bigr) \overset\sim\to \End(\Omega^{\bullet}(S^{1}))$ that ignores any $1$-form data and evaluates each $0$-form at $1\in \bR_{>0}$.  Thus up to homotopy of (di/pr)operads, we may identify $\End(\Omega^{\bullet}(S^{1}))$ and $\Omega^{\bullet}\bigl(\bR_{>0}; \End(\Omega^{\bullet}(S^{1}))\bigr)$.  We can then clarify Definition~\ref{defn.quasilocallift}:

\begin{definition}\label{defn.quasilocallift.redux}
  A \define{quasilocal action} of a cofibrant (di/pr)operad $P$ on $\Omega^{\bullet}(S^{1})$ is  
  an action of $P$ on $\Omega^{\bullet}(S^{1})$
  equipped with a factorization
  through a homomorphism $$P \to \qloc(S^{1}) \mono \Omega^{\bullet}\bigl(\bR_{>0}; \End(\Omega^{\bullet}(S^{1}))\bigr) \overset\sim\to \End(\Omega^{\bullet}(S^{1})).$$
\end{definition}

\begin{proposition} \label{prop.Hqloc}
  The space of quasilocal operations has the following cohomology:
  $$ \dim \H^{\bullet}\qloc(m,n) = \begin{cases} 1 & \bullet = n-1 \text{ or } n \\ 0 & \text{otherwise.} \end{cases} $$
\end{proposition}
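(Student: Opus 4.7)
My plan is to identify $\qloc(S^{1})(m,n)$ up to quasiisomorphism with a simple model whose cohomology is computed by K\"unneth. Using Remark~\ref{remark.nuclearTVS}, I first view $\qloc(m,n)$ as the subcomplex of $\Omega^{\bullet}\bigl(\bR_{>0} \times (S^{1})^{m+n}\bigr)[m]$ consisting of forms that are smooth in the $\bR_{>0}$- and $y_{j}$-directions, distributional in the $x_{i}$-directions, and satisfy the support-concentration condition of Definition~\ref{defn.quasilocal}.

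The key step is to construct a quasiisomorphism
$$\mathrm{ev}_{0} : \qloc(S^{1})(m,n) \overset{\sim}{\longrightarrow} \mathcal{D}^{\bullet}_{\diag(S^{1})}\bigl((S^{1})^{m+n}\bigr)[m]$$
whose target is the complex of distributional differential forms on $(S^{1})^{m+n}$ supported on the diagonal $\diag(S^{1})$. In one direction, quasilocality plus smoothness in $\epsilon$ forces the $\epsilon \to 0$ limit of a quasilocal family to exist (after possibly replacing the family by a cohomologous one) as a distribution supported on the diagonal, producing $\mathrm{ev}_{0}$. In the other direction, any distribution supported on the diagonal can be ``resolved'' back to a quasilocal family by convolving in the $y_{j}$-directions with a bump function of width $\epsilon$. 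The delicate point --- and the main obstacle --- is checking that contracting homotopies exist on both sides, using the contractibility of $\bR_{>0}$ together with the support condition to control the small-$\epsilon$ behavior.

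Finally, the cohomology of $\mathcal{D}^{\bullet}_{\diag(S^{1})}\bigl((S^{1})^{m+n}\bigr)$ is computed by K\"unneth: fix a tubular neighborhood identification $\diag(S^{1}) \subset S^{1} \times \bR^{m+n-1}$, so that distributional forms supported on $\diag(S^{1})$ factor as smooth forms on $S^{1}$ tensored with distributional forms at the origin of $\bR^{m+n-1}$. The first factor has cohomology $\H^{\bullet}(S^{1})$, of dimension $1$ in degrees $0$ and $1$. The second factor is the standard ``distributions at a point'' complex, whose cohomology is one-dimensional in top degree $m+n-1$ (essentially dual to the Poincar\'e lemma for $\bR^{m+n-1}$). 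Taking the tensor product and applying the shift $[m]$ yields dimension $1$ in degrees $n-1$ and $n$, as claimed.
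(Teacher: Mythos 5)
The core difficulty with your approach is that the map $\mathrm{ev}_{0}$ is not a well-defined chain map: an element $f\in\qloc(m,n)$ is merely a smooth form on $\bR_{>0}$ valued in operations, and nothing in Definition~\ref{defn.quasilocal} forces $\lim_{\epsilon\to 0}f(\epsilon)$ to exist, even distributionally. (Multiply the $\epsilon$-component of any element of $\qloc(m,n)$ by $\sin(1/\epsilon)$; the result is still quasilocal but has no limit as $\epsilon\to 0$.) Your remedy --- replacing $f$ by a cohomologous representative --- would at best produce a correspondence on cohomology classes, not a map of cochain complexes, so it does not give you access to the contracting homotopies or the two-out-of-three property you invoke next. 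You flag this as ``the delicate point,'' but as stated the forward map does not exist, and everything downstream rests on it.

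The paper's proof avoids the $\epsilon\to 0$ limit entirely. It filters $\qloc(m,n)$ by profile functions $\ell$, writing it as a filtered colimit of subcomplexes $\qloc_\ell(m,n)$ supported in a wedge-shaped region $U_\ell\subset\bR_{>0}\times(S^{1})^{\times(m+n)}$. Outside a tubular neighborhood of $\diag(S^{1})$, membership in $U_\ell$ forces a form to vanish for all small $\epsilon$, and the complex of forms on $\bR_{>0}$ vanishing below a cutoff is contractible by an explicit homotopy. Applying that homotopy fiberwise gives a genuine cochain-level deformation retraction of $\qloc_\ell(m,n)$ onto the complex of forms on a tubular neighborhood of $\bR_{>0}\times\diag(S^{1})$ compactly supported in the fibers, which is $\Omega^{\bullet}(S^{1})[1-m-n]$ by the Thom isomorphism; the shift by $[m]$ then gives the answer. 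Your target complex of diagonal-supported distributional forms does have the right cohomology (your K\"unneth/Thom computation at the end is correct, and that complex is quasiisomorphic to the paper's strip model), but a comparison with $\qloc$ has to be built by some route other than evaluation at $\epsilon=0$ --- for instance by only constructing the ``resolution'' map in the direction you describe second, and checking it is a quasiisomorphism via the paper's retraction rather than via an inverse $\mathrm{ev}_{0}$.
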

\begin{proof}
Recall (Definition~\ref{defn.quasilocal}) that $\diag: S^{1} \to (S^{1})^{\times(m+n)}$ denotes the diagonal embedding.

  Let $\ell : \bR_{>0} \to \bR_{>0}$ be a smooth monotonic function with $\lim_{\epsilon \to 0}\ell(\epsilon) = 0$.  Let $\qloc_{\ell}(m,n) \subseteq \Omega^{\bullet}\bigl(\bR_{>0}; \End(\Omega^{\bullet}(S^{1}))(m,n)\bigr) \subseteq \Omega^{\bullet}_{\mathrm{dist}}\bigl(\bR_{>0} \times (S^{1})^{\times (m+n)} \bigr)[m]$ denote the subcomplex whose integral kernels are supported in the set 
  $$ U_{\ell} = \bigl\{ (\epsilon,\vec x) \in \bR \times (S^{1})^{\times (m+n)} : \text{$\vec x$ is within  distance $\ell(\epsilon)$ of $\diag(S^{1})$} \bigr\}. $$
 For a fixed function $\ell$, the $\bS$-bimodules $\qloc_{\ell}(m,n)$ do not package together into a properad.  But the set of such functions $\ell$ is partially ordered by $\leq$, and $\qloc(m,n)$ is the filtered colimit 
  $$ \qloc(m,n) = \bigcup_{\ell} \qloc_{\ell}(m,n). $$
  Since for $\ell' \leq \ell$ the map $\qloc_{\ell'}(m,n) \to \qloc_{\ell}(m,n)$ is an inclusion, and since filtered colimits of cochain complexes along inclusions are homotopy filtered colimits, we see that
  $$ \H^{\bullet}\qloc(m,n) = \operatorname{hocolim}_{\ell} \limits \H^{\bullet}\qloc_{\ell}(m,n). $$
  It thus suffices to show that $\H^{\bullet}\qloc_{\ell}(m,n) \cong \H^{\bullet}(S^{1})[1-n]$ for each $\ell$ and that the inclusions $\qloc_{\ell'}(m,n) \hookrightarrow \qloc_{\ell}(m,n)$ are quasiisomorphisms.
  
  Fix the function $\ell$.  Then the set $U_{\ell} \subseteq \bR_{>0} \times (S^{1})^{\times(m+n)}$ looks like
  $$ U_{\ell} \quad = \quad
  \begin{tikzpicture}[baseline=(diag)]
    \path (0,0) node[dot] (diag) {} node[anchor=east] {$\scriptstyle \diag(S^{1})$} ;
    \draw[thick] (0,.1) -- (0,1.5) node[anchor=south] {$\scriptstyle (S^{1})^{\times(m+n)}$} ;
    \draw[thick] (0,-.1) -- (0,-1.5);
    \draw[thick,->] (0,-2) node[anchor=north] {$\scriptstyle \epsilon = 0$} (.1,-2) -- node[auto,swap] {$\scriptstyle \bR_{>0}$} (4,-2) node[anchor=north] {$\scriptstyle \epsilon = +\infty$};
    \draw[ultra thick,dashed] (diag) .. controls +(1,.1) and +(-.5,-.5) .. (2.5,1.5);
    \draw[ultra thick,dashed] (2.5,-1.5) .. controls  +(-.5,.5) and +(1,-.1) .. (diag);
    \path[fill=black!25] (diag) .. controls +(1,.1) and +(-.5,-.5) .. (2.5,1.5) -- (4,1.5) -- (4,-1.5) -- (2.5,-1.5) .. controls  +(-.5,.5) and +(1,-.1) .. (diag);
  \end{tikzpicture}
  $$
  where the dashed lines are the boundary of the neighborhood of $\diag(S^{1})$ of radius $\ell(\epsilon)$.  Recall from Remark~\ref{remark.nuclearTVS} that $\End(\Omega^{\bullet}\bigr(S^{1})\bigl)(m,n) \cong \Omega^{\bullet}\bigl((S^{1})^{\times(m+n)}\bigr)[m]$ with some regularity conditions that don't affect cohomology calculations.  It follows that
  \begin{equation}
  \label{eqn.a shift by m for qloc-l}
   \qloc_{\ell}(m,n)  \cong \Omega^{\bullet}\left(
  \begin{tikzpicture}[baseline=(diag)]
    \path (0,0) node[dot] (diag) {} node[anchor=east] {$\scriptstyle \diag(S^{1})$} ;
    \draw[thick] (0,.1) -- (0,1.5) node[anchor=south] {$\scriptstyle (S^{1})^{\times(m+n)}$} ;
    \draw[thick] (0,-.1) -- (0,-1.5);
    \draw[thick,->] (0,-2) node[anchor=north] {$\scriptstyle \epsilon = 0$} (.1,-2) -- node[auto,swap] {$\scriptstyle \bR_{>0}$} (4,-2) node[anchor=north] {$\scriptstyle \epsilon = +\infty$};
    \draw[ultra thick,dashed] (diag) .. controls +(1,.1) and +(-.5,-.5) .. (2.5,1.5);
    \draw[ultra thick,dashed] (2.5,-1.5) .. controls  +(-.5,.5) and +(1,-.1) .. (diag);
    \path[fill=black!25] (diag) .. controls +(1,.1) and +(-.5,-.5) .. (2.5,1.5) -- (4,1.5) -- (4,-1.5) -- (2.5,-1.5) .. controls  +(-.5,.5) and +(1,-.1) .. (diag);
  \end{tikzpicture}
  \right)[m],\end{equation}
  where the picture means the complex of de Rham forms on the entire $\bR_{>0} \times (S^{1})^{\times(m+n)}$ that vanish in the white regions.  
  
  The complex of forms on $\bR_{>0}$ that vanish below some cutoff is exact:
  $$ \Omega^{\bullet}\left( 
  \begin{tikzpicture}[baseline=(basing)]
    \draw[thick,dotted] (0,-3pt) coordinate (basing) (0,0) node[anchor=north] {$\scriptstyle \epsilon = 0$} (.1,0) -- 
     node[auto,swap,pos=.75] {$\scriptstyle \bR_{>0}$}
     (2.55,0);
    \draw[thick,->] (2.55,0) --
      (4,0) node[anchor=north] {$\scriptstyle \epsilon = +\infty$};  
    \draw[very thick,dashed] (2.5,-.25) -- (2.6,.25);
  \end{tikzpicture}
  \right) \simeq 0.$$
  We may witness this exactness by an explicit homotopy, depending smoothly on the location of the cutoff.  Applying this outside of a tubular neighborhood of $\diag(S^{1})$ provides an explicit deformation retraction of complexes witnessing the following quasiisomorphism:
  $$ \Omega^{\bullet}\left(
  \begin{tikzpicture}[baseline=(diag)]
    \path (0,0) node[dot] (diag) {} node[anchor=east] {$\scriptstyle \diag(S^{1})$} ;
    \draw[thick] (0,.1) -- (0,1.5) node[anchor=south] {$\scriptstyle (S^{1})^{\times(m+n)}$} ;
    \draw[thick] (0,-.1) -- (0,-1.5);
    \draw[thick,->] (0,-2) node[anchor=north] {$\scriptstyle \epsilon = 0$} (.1,-2) -- node[auto,swap] {$\scriptstyle \bR_{>0}$} (4,-2) node[anchor=north] {$\scriptstyle \epsilon = +\infty$};
    \draw[ultra thick,dashed] (diag) .. controls +(1,.1) and +(-.5,-.5) .. (2.5,1.5);
    \draw[ultra thick,dashed] (2.5,-1.5) .. controls  +(-.5,.5) and +(1,-.1) .. (diag);
    \path[fill=black!25] (diag) .. controls +(1,.1) and +(-.5,-.5) .. (2.5,1.5) -- (4,1.5) -- (4,-1.5) -- (2.5,-1.5) .. controls  +(-.5,.5) and +(1,-.1) .. (diag);
  \end{tikzpicture}
  \right)
  \quad \simeq \quad
  \Omega^{\bullet}\left(
  \begin{tikzpicture}[baseline=(diag)]
    \path (0,0) node[dot] (diag) {} node[anchor=east] {$\scriptstyle \diag(S^{1})$} ;
    \draw[thick] (0,.1) -- (0,1.5) node[anchor=south] {$\scriptstyle (S^{1})^{\times(m+n)}$} ;
    \draw[thick] (0,-.1) -- (0,-1.5);
    \draw[thick,->] (0,-2) node[anchor=north] {$\scriptstyle \epsilon = 0$} (.1,-2) -- node[auto,swap] {$\scriptstyle \bR_{>0}$} (4,-2) node[anchor=north] {$\scriptstyle \epsilon = +\infty$};
    \draw[ultra thick,dashed] (diag) .. controls +(1,.1) and +(-2,-0) .. (4,.75);
    \draw[ultra thick,dashed] (4,-.75) .. controls  +(-2,0) and +(1,-.1) .. (diag);
    \path[fill=black!25] (diag) .. controls +(1,.1) and +(-2,-0) .. (4,.75) -- (4,-.75) .. controls  +(-2,0) and +(1,-.1) .. (diag);
  \end{tikzpicture}
  \right).
  $$
  But the latter is just the complex of forms on a tubular neighborhood of $\bR_{>0} \times \diag(S^{1}) \hookrightarrow \bR_{>0} \times (S^{1})^{\times(m+n)}$ that have compact support in the fibers of the tubular neighborhood.  Such a tubular neighborhood has codimension $m+n-1$, and so
  $$
  \Omega^{\bullet}\left(
  \begin{tikzpicture}[baseline=(diag)]
    \path (0,0) node[dot] (diag) {} node[anchor=east] {$\scriptstyle \diag(S^{1})$} ;
    \draw[thick] (0,.1) -- (0,1.5) node[anchor=south] {$\scriptstyle (S^{1})^{\times(m+n)}$} ;
    \draw[thick] (0,-.1) -- (0,-1.5);
    \draw[thick,->] (0,-2) node[anchor=north] {$\scriptstyle \epsilon = 0$} (.1,-2) -- node[auto,swap] {$\scriptstyle \bR_{>0}$} (4,-2) node[anchor=north] {$\scriptstyle \epsilon = +\infty$};
    \draw[ultra thick,dashed] (diag) .. controls +(1,.1) and +(-2,-0) .. (4,.75);
    \draw[ultra thick,dashed] (4,-.75) .. controls  +(-2,0) and +(1,-.1) .. (diag);
    \path[fill=black!25] (diag) .. controls +(1,.1) and +(-2,-0) .. (4,.75) -- (4,-.75) .. controls  +(-2,0) and +(1,-.1) .. (diag);
  \end{tikzpicture}
  \right)
  \simeq
  \Omega^{\bullet}\bigl(\bR_{>0} \times \diag(S^{1})\bigr)[1-m-n]
  \simeq
  \Omega^{\bullet}(S^{1})[1-m-n]
  .
  $$
  Combining with the shift by $[m]$ from equation~(\ref{eqn.a shift by m for qloc-l}) gives the desired answer.
  
  Finally, note that if $\ell' \leq \ell$, then applying the homotopy above for both $U_{\ell'}$ and $U_{\ell}$ produces an inclusion of the form
  $$ 
  \Omega^{\bullet}\left(
  \begin{tikzpicture}[baseline=(diag)]
    \path (0,0) node[dot] (diag) {} node[anchor=east] {$\scriptstyle \diag(S^{1})$} ;
    \draw[thick] (0,.1) -- (0,1.5) node[anchor=south] {$\scriptstyle (S^{1})^{\times(m+n)}$} ;
    \draw[thick] (0,-.1) -- (0,-1.5);
    \draw[thick,->] (0,-2) node[anchor=north] {$\scriptstyle \epsilon = 0$} (.1,-2) -- node[auto,swap] {$\scriptstyle \bR_{>0}$} (4,-2) node[anchor=north] {$\scriptstyle \epsilon = +\infty$};
    \draw[ultra thick,dashed] (diag) .. controls +(2,.1) and +(-2,-0) .. (4,.25);
    \draw[ultra thick,dashed] (4,-.25) .. controls  +(-2,0) and +(2,-.1) .. (diag);
    \path[fill=black!25] (diag) .. controls +(2,.1) and +(-2,-0) .. (4,.25) -- (4,-.25) .. controls  +(-2,0) and +(2,-.1) .. (diag);
  \end{tikzpicture}
  \right)
  \quad \hookrightarrow \quad
  \Omega^{\bullet}\left(
  \begin{tikzpicture}[baseline=(diag)]
    \path (0,0) node[dot] (diag) {} node[anchor=east] {$\scriptstyle \diag(S^{1})$} ;
    \draw[thick] (0,.1) -- (0,1.5) node[anchor=south] {$\scriptstyle (S^{1})^{\times(m+n)}$} ;
    \draw[thick] (0,-.1) -- (0,-1.5);
    \draw[thick,->] (0,-2) node[anchor=north] {$\scriptstyle \epsilon = 0$} (.1,-2) -- node[auto,swap] {$\scriptstyle \bR_{>0}$} (4,-2) node[anchor=north] {$\scriptstyle \epsilon = +\infty$};
    \draw[ultra thick,dashed] (diag) .. controls +(1,.1) and +(-2,-0) .. (4,.75);
    \draw[ultra thick,dashed] (4,-.75) .. controls  +(-2,0) and +(1,-.1) .. (diag);
    \path[fill=black!25] (diag) .. controls +(1,.1) and +(-2,-0) .. (4,.75) -- (4,-.75) .. controls  +(-2,0) and +(1,-.1) .. (diag);
  \end{tikzpicture}
  \right).
  $$
But this is just an inclusion of one tubular neighborhood around $\bR_{>0} \times \diag(S^{1})$ into another, and hence a quasiisomorphism.
\end{proof}

\section{Koszulity of \texorpdfstring{$\Frob_{1}$}{Frob_1}} \label{section.Koszul}

As far as this paper is concerned, the \emph{raison d'\^etre} of Koszul duality theory is to provide small cofibrant replacements of objects of interest.
We will briefly recall enough of the theory for our purposes.

  For any $\bS$-bimodule $T$,  denote by $\Ff(T)$ the free (di/pr)operad generated by $T$.  (This is not much of an abuse of notation, as the universal enveloping properad of the free dioperad generated by $T$ is the free properad generated by $T$.)  Note that $\Ff(T)$ is also a co(di/pr)operad.  We let $\Ff^{(k)}(T)$ denote the sub-$\bS$-bimodule of $\Ff(T)$ that transforms with weight $k$ under the canonical $\bR^{\times}$-action on $T$. In particular, $\Ff^{(1)}(T) = T$, and $\Ff(T) = \bigoplus_{k\geq 1}\Ff^{(k)}(T)$.  
    
\begin{definition} \label{defn.quaddual}
  A \define{quadratic} (di/pr)operad is a (di/pr)operad presented as $P = \Ff(T) / \<R\>$, where $T$ is an $\bS$-bimodule and $R \subseteq \Ff^{(2)}(T)$ is a sub-$\bS$-bimodule generating the ideal $\<R\>$.
  
  The \define{quadratic dual} $P^{\shriek}$ of a quadratic (di/pr)operad $P$ is the maximal graded sub-co(di/pr)operad of $\Ff(T[1])$ whose intersection with $\Ff^{(2)}(T[1])$ is precisely $R[2]\subseteq \Ff^{(2)}(T[1])$.
\end{definition}

Note that Remark~\ref{remark.signs} introduces some signs when comparing homogeneous elements of~$R$ and~$R[2]$.  Indeed, suppose that for $t_{1},t_{2} \in T$, some particular composition $  \begin{tikzpicture}[baseline=(basepoint),yscale=.5,xscale=-.5]
  \path(0,.25) coordinate (basepoint);
  \draw (-.5,.5) node[circle,draw,inner sep=2,fill=gray] (delta1) {};
  \draw (.5,.1) node[circle,draw,inner sep=2,fill=gray] (delta2) {};
  \draw[onearrow]  (-.25,-.5) -- (delta1);
  \draw[onearrow] (-.75,-.5) -- (delta1);
  \draw[onearrow]  (.25,-.5) -- (delta2);
  \draw[onearrow] (.75,-.5) -- (delta2);
  \draw[onearrow] (delta1) -- (-.85,1.1);
  \draw[onearrow] (delta1) -- (-.15,1.1);
  \draw[onearrow] (delta2) -- (.15,1.1);
  \draw[onearrow] (delta2) -- (.85,1.1);
  \draw[]  (.5,1) node {$\scriptstyle \dots$} (-.5,-.35) node {$\scriptstyle \dots$} ;
  \draw[] (delta2) ..controls +(-.5,.125) and +(.25,-.25) .. (delta1);
  \draw[] (delta2) ..controls +(-.25,.25) and +(.5,-.125) .. (delta1);
  \draw[] (-.05,.25) node[anchor=base] {$\scriptscriptstyle \cdots$};
\end{tikzpicture}
(t_{1},t_{2}) \in R \vspace{-6pt}$.  Then $R[2]$ contains $  \begin{tikzpicture}[baseline=(basepoint),yscale=.5,xscale=-.5]
  \path(0,.25) coordinate (basepoint);
  \draw (-.5,.5) node[circle,draw,inner sep=2,fill=gray] (delta1) {};
  \draw (.5,.1) node[circle,draw,inner sep=2,fill=gray] (delta2) {};
  \draw[onearrow]  (-.25,-.5) -- (delta1);
  \draw[onearrow] (-.75,-.5) -- (delta1);
  \draw[onearrow]  (.25,-.5) -- (delta2);
  \draw[onearrow] (.75,-.5) -- (delta2);
  \draw[onearrow] (delta1) -- (-.85,1.1);
  \draw[onearrow] (delta1) -- (-.15,1.1);
  \draw[onearrow] (delta2) -- (.15,1.1);
  \draw[onearrow] (delta2) -- (.85,1.1);
  \draw[]  (.5,1) node {$\scriptstyle \dots$} (-.5,-.35) node {$\scriptstyle \dots$} ;
  \draw[] (delta2) ..controls +(-.5,.125) and +(.25,-.25) .. (delta1);
  \draw[] (delta2) ..controls +(-.25,.25) and +(.5,-.125) .. (delta1);
  \draw[] (-.05,.25) node[anchor=base] {$\scriptscriptstyle \cdots$};
\end{tikzpicture}
(t_{1},t_{2})\iota^{2} = (-1)^{\deg t_{2}}  \begin{tikzpicture}[baseline=(basepoint),yscale=.5,xscale=-.5]
  \path(0,.25) coordinate (basepoint);
  \draw (-.5,.5) node[circle,draw,inner sep=2,fill=gray] (delta1) {};
  \draw (.5,.1) node[circle,draw,inner sep=2,fill=gray] (delta2) {};
  \draw[onearrow]  (-.25,-.5) -- (delta1);
  \draw[onearrow] (-.75,-.5) -- (delta1);
  \draw[onearrow]  (.25,-.5) -- (delta2);
  \draw[onearrow] (.75,-.5) -- (delta2);
  \draw[onearrow] (delta1) -- (-.85,1.1);
  \draw[onearrow] (delta1) -- (-.15,1.1);
  \draw[onearrow] (delta2) -- (.15,1.1);
  \draw[onearrow] (delta2) -- (.85,1.1);
  \draw[]  (.5,1) node {$\scriptstyle \dots$} (-.5,-.35) node {$\scriptstyle \dots$} ;
  \draw[] (delta2) ..controls +(-.5,.125) and +(.25,-.25) .. (delta1);
  \draw[] (delta2) ..controls +(-.25,.25) and +(.5,-.125) .. (delta1);
  \draw[] (-.05,.25) node[anchor=base] {$\scriptscriptstyle \cdots$};
\end{tikzpicture}
(t_{1}\iota,t_{2}\iota)$.  Note also that we have decided to draw $t_{2}$ at the top.  This explains some signs that will arise later on.

\begin{definition}
  Let $Q$ be any co(di/pr)operad, such that for each $m_{1},m_{2},n_{1},n_{2}$, there are only finitely many $k\in \bN$ for which the decomposition map $ Q\bigl( m_{1} \sqcup  m_{2},  n_{1} \sqcup  n_{2} \bigr) \to Q\bigl( m_{1}, k \sqcup  n_{1} \bigr) \otimes Q\bigl( m_{2} \sqcup  k, n_{2}\bigr) $ is nonzero.
  The \define{cobar construction} applied to $Q$ produces the (di/pr)operad $\B Q = \Ff(Q[-1])$, with the differential extending the degree-$1$ map $\sum (\text{decompositions}) : Q[-1] \to \Ff^{(2)}(Q[-1])$.  Coassiciativity is equivalent to the differential squaring to $0$.
\end{definition}

\begin{remark}\label{remark.signs2}
Given $q\in Q$, we will draw the corresponding generator $q\iota^{-1} \in Q[-1] \subseteq \B Q$ with a box, like $\tikz[baseline=(q.base)] \node[draw] (q) {$q$};$.  A clean convention is to declare that
 if the decomposition $\Delta(q)$ contains $  \begin{tikzpicture}[baseline=(basepoint),yscale=.5,xscale=-.5]
  \path(0,.25) coordinate (basepoint);
  \draw (-.5,.5) node[circle,draw,inner sep=2,fill=gray] (delta1) {};
  \draw (.5,.1) node[circle,draw,inner sep=2,fill=gray] (delta2) {};
  \draw[onearrow]  (-.25,-.5) -- (delta1);
  \draw[onearrow] (-.75,-.5) -- (delta1);
  \draw[onearrow]  (.25,-.5) -- (delta2);
  \draw[onearrow] (.75,-.5) -- (delta2);
  \draw[onearrow] (delta1) -- (-.85,1.1);
  \draw[onearrow] (delta1) -- (-.15,1.1);
  \draw[onearrow] (delta2) -- (.15,1.1);
  \draw[onearrow] (delta2) -- (.85,1.1);
  \draw[]  (.5,1) node {$\scriptstyle \dots$} (-.5,-.35) node {$\scriptstyle \dots$} ;
  \draw[] (delta2) ..controls +(-.5,.125) and +(.25,-.25) .. (delta1);
  \draw[] (delta2) ..controls +(-.25,.25) and +(.5,-.125) .. (delta1);
  \draw[] (-.05,.25) node[anchor=base] {$\scriptscriptstyle \cdots$};
\end{tikzpicture}
(q_{1},q_{2})$, then $\partial (q\iota^{-1})$ contains $(-1)^{\deg q_{1}}  \begin{tikzpicture}[baseline=(basepoint),yscale=.5,xscale=-.5]
  \path(0,.25) coordinate (basepoint);
  \draw (-.5,.5) node[circle,draw,inner sep=2,fill=gray] (delta1) {};
  \draw (.5,.1) node[circle,draw,inner sep=2,fill=gray] (delta2) {};
  \draw[onearrow]  (-.25,-.5) -- (delta1);
  \draw[onearrow] (-.75,-.5) -- (delta1);
  \draw[onearrow]  (.25,-.5) -- (delta2);
  \draw[onearrow] (.75,-.5) -- (delta2);
  \draw[onearrow] (delta1) -- (-.85,1.1);
  \draw[onearrow] (delta1) -- (-.15,1.1);
  \draw[onearrow] (delta2) -- (.15,1.1);
  \draw[onearrow] (delta2) -- (.85,1.1);
  \draw[]  (.5,1) node {$\scriptstyle \dots$} (-.5,-.35) node {$\scriptstyle \dots$} ;
  \draw[] (delta2) ..controls +(-.5,.125) and +(.25,-.25) .. (delta1);
  \draw[] (delta2) ..controls +(-.25,.25) and +(.5,-.125) .. (delta1);
  \draw[] (-.05,.25) node[anchor=base] {$\scriptscriptstyle \cdots$};
\end{tikzpicture}
(q_{1}\iota^{-1},q_{2}\iota^{-1})$, or:\vspace{-12pt}
$$
 \partial\tikz[baseline=(q.base)] \node[draw] (q) {$q$}; \ni 
 (-1)^{\deg q_{1}}
   \begin{tikzpicture}[baseline=(basepoint),yscale=1.25,xscale=-1]
  \path(0,.25) coordinate (basepoint);
  \draw (-.5,.5) node[draw] (delta1) {$q_{2}$};
  \draw (.5,0) node[draw] (delta2) {$q_{1}$};
  \draw[onearrow]  (-.25,-.5) .. controls +(0,.25) and +(.25,-.5) .. (delta1);
  \draw[onearrow] (-.75,-.5) .. controls +(0,.25) and +(-.25,-.25) .. (delta1);
  \draw[onearrow]  (.25,-.5) .. controls +(0,.25) and +(-.25,-.25) .. (delta2);
  \draw[onearrow] (.75,-.5) .. controls +(0,.25) and +(.25,-.25) .. (delta2);
  \draw[onearrow] (delta1) .. controls +(-.25,.25) and +(0,-.25) .. (-.85,1.1);
  \draw[onearrow] (delta1) .. controls +(.25,.25) and +(0,-.25) .. (-.15,1.1);
  \draw[onearrow] (delta2) .. controls +(-.25,.5) and +(0,-.25) .. (.15,1.1);
  \draw[onearrow] (delta2) .. controls +(.25,.25) and +(0,-.25) .. (.85,1.1);
  \draw[] (-.5,1) node {$\scriptstyle \dots$} (.5,1) node {$\scriptstyle \dots$};
  \draw[] (-.5,-.35) node {$\scriptstyle \dots$} (.5,-.35) node {$\scriptstyle \dots$};
  \draw[] (delta2) ..controls +(-.5,.125) and +(.25,-.25) .. (delta1);
  \draw[] (delta2) ..controls +(-.25,.25) and +(.5,-.125) .. (delta1);
  \draw[] (-.05,.25) node[anchor=base] {$\scriptscriptstyle \cdots$};
\end{tikzpicture}
$$
Note that the sign corresponds to the unboxed degree of the \emph{bottom} vertex.

To confirm this choice, we can check that $\partial^2 = 0$.  Suppose that  $\Delta^2(q)$ has a summand of the following form:
$$ \Delta^2(q) \ni   
  \;\begin{tikzpicture}[baseline=(q2.base),xscale=-1,yscale=1.5]
    \path (0,0) node (q1) {$q_{1}$};
    \path (.7,.7) node (q2) {$q_{2}$};
    \path (-.7,1.4) node (q3) {$q_{3}$};
    \draw[tensor,onearrow] (q1) -- (q2);
    \draw[tensor,onearrow] (q1) -- (q3);
    \draw[tensor,onearrow] (q2) -- (q3);
  \end{tikzpicture}\; $$
  Then
  $$ \partial\tikz[baseline=(q.base)] \node[draw] (q) {$q$}; \ni
   (-1)^{\deg q_1}
  \;\begin{tikzpicture}[baseline=(q2.base),xscale=-1,yscale=1.5]
    \path (0,0) node[draw] (q1) {$q_{1}$};
    \path (.7,.7) node (q2) {$q_{2}$};
    \path (-.7,1.4) node (q3) {$q_{3}$};
    \draw[tensor,onearrow] (q2) -- (q3);
    \draw (q2.south west) --  coordinate (q23)  (q3.north east |- q2.south west) -- (q3.north east) --  (q3.north east -| q2.south west) -- cycle;
    \draw[tensor,onearrow] (q1) -- (q23);
  \end{tikzpicture}\;
  + (-1)^{\deg q_1+ \deg q_2}
  \;\begin{tikzpicture}[baseline=(q2.base),xscale=-1,yscale=1.5]
    \path (0,0) node (q1) {$q_{1}$};
    \path (.7,.7) node (q2) {$q_{2}$};
    \path (-.7,1.4) node[draw] (q3) {$q_{3}$};
    \draw[tensor,onearrow] (q1) -- (q2);
    \draw (q2.north west) --  coordinate (q12)  (q1.south east |- q2.north west) -- (q1.south east) --  (q1.south east -| q2.north west) -- cycle;
    \draw[tensor,onearrow] (q12) -- (q3);
  \end{tikzpicture}\;
  $$
  and so
  \begin{multline*}
  \partial^2\tikz[baseline=(q.base)] \node[draw] (q) {$q$}; \ni
  (-1)^{\deg \tikz[baseline=(q.base)] \node[draw,inner sep=1pt] (q) {$\scriptstyle q_1$};}
   (-1)^{\deg q_1}
  \;\begin{tikzpicture}[baseline=(q2.base),xscale=-1,yscale=1.5]
    \path (0,0) node[draw] (q1) {$q_{1}$};
    \path (.7,.7) node (q2) {$q_{2}$};
    \path (-.7,1.4) node (q3) {$q_{3}$};
    \draw[tensor,onearrow] (q2) -- (q3);
    \draw (q2.south west) --  coordinate (q23)  (q3.north east |- q2.south west) -- (q3.north east) --  (q3.north east -| q2.south west) -- node[anchor=east] {$\partial$} (q2.south west) ;
    \draw[tensor,onearrow] (q1) -- (q23);
  \end{tikzpicture}\;
  + (-1)^{\deg q_1+ \deg q_2}
  \;\begin{tikzpicture}[baseline=(q2.base),xscale=-1,yscale=1.5]
    \path (0,0) node (q1) {$q_{1}$};
    \path (.7,.7) node (q2) {$q_{2}$};
    \path (-.7,1.4) node[draw] (q3) {$q_{3}$};
    \draw[tensor,onearrow] (q1) -- (q2);
    \draw (q2.north west) --  coordinate (q12)  (q1.south east |- q2.north west) -- (q1.south east) --  (q1.south east -| q2.north west) -- node[anchor=east] {$\partial$} (q2.north west);
    \draw[tensor,onearrow] (q12) -- (q3);
  \end{tikzpicture}\;
  \\ \ni
  (-1)^{\deg q_1 + 1}
   (-1)^{\deg q_1}
  (-1)^{\deg q_2}
  \;\begin{tikzpicture}[baseline=(q2.base),xscale=-1,yscale=1.5]
    \path (0,0) node[draw] (q1) {$q_{1}$};
    \path (.7,.7) node[draw] (q2) {$q_{2}$};
    \path (-.7,1.4) node[draw] (q3) {$q_{3}$};
    \draw[tensor,onearrow] (q1) -- (q2);
    \draw[tensor,onearrow] (q1) -- (q3);
    \draw[tensor,onearrow] (q2) -- (q3);
  \end{tikzpicture}\;
  +(-1)^{\deg q_1+ \deg q_2}(-1)^{\deg q_1}
  \;\begin{tikzpicture}[baseline=(q2.base),xscale=-1,yscale=1.5]
    \path (0,0) node[draw] (q1) {$q_{1}$};
    \path (.7,.7) node[draw] (q2) {$q_{2}$};
    \path (-.7,1.4) node[draw] (q3) {$q_{3}$};
    \draw[tensor,onearrow] (q1) -- (q2);
    \draw[tensor,onearrow] (q1) -- (q3);
    \draw[tensor,onearrow] (q2) -- (q3);
  \end{tikzpicture}\;
  = 0.
  \end{multline*}
  Similar calculations apply to the other types of associativity from Definition~\ref{defn.diproperad}.
\end{remark}

One of the many uses of the cobar construction is to organize the inductive computation of quadratic duals:

\begin{lemma} \label{lemma.cobarinductive}
  Let $P = \Ff(T)/\<R\>$ be a quadratic (di/pr)operad and $P^{\shriek}$ its quadratic dual.  The canonical $\bR^{\times}$ action on $T$ extends to a grading by positive integers of both $P$ and $P^{\shriek}$.  For $x\in \Ff^{(k)}(T[1])$, let $x\iota^{-1} \in \Ff^{(k)}(T[1])[-1]$ denote the corresponding generator of $\B\Ff(T[1])$, and $\partial(x\iota^{-1})$ its differential therein.  Suppose that $k\geq 3$.  Then $x\in P^{\shriek}$ if and only if  $\partial(x\iota^{-1}) \in \B P^{\shriek} \subseteq \B\Ff(T[1])$.
\end{lemma}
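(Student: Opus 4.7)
The plan is to unwind the condition $\partial(x\iota^{-1})\in \B P^{\shriek}$ concretely and then verify each implication, with the backward direction resting on the maximality clause of Definition~\ref{defn.quaddual}. By construction, $\partial(x\iota^{-1})$ lies in the weight-$2$ piece $\Ff^{(2)}(\Ff(T[1])[-1])$ of $\B\Ff(T[1])$ and, up to the signs recorded in Remark~\ref{remark.signs2}, is a sum of terms $(x_{1}\iota^{-1})\circ(x_{2}\iota^{-1})$ indexed by the coproperadic decompositions $\Delta(x)=\sum x_{1}\otimes x_{2}$ performed in $\Ff(T[1])$. Since $\B P^{\shriek}\cap \Ff^{(2)}(\Ff(T[1])[-1])=\Ff^{(2)}(P^{\shriek}[-1])$, the condition $\partial(x\iota^{-1})\in \B P^{\shriek}$ is equivalent to the per-term statement that every $x_{1}$ and $x_{2}$ appearing in any decomposition of $x$ belongs to $P^{\shriek}$.

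The forward direction is then immediate: because $P^{\shriek}$ is by definition a sub-co(di/pr)operad of $\Ff(T[1])$, one has $\Delta(P^{\shriek})\subseteq P^{\shriek}\otimes P^{\shriek}$, and in particular the factors of $\Delta(x)$ all lie in $P^{\shriek}$.

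For the backward direction, I let $Q\subseteq \Ff(T[1])$ be the sub-$\bS$-bimodule spanned by $P^{\shriek}$ together with the $\bS$-orbit of $x$. The cofree decomposition on $\Ff(T[1])$ is $\bS$-equivariant and $P^{\shriek}\otimes P^{\shriek}$ is $\bS$-stable, so every $\bS$-translate of $x$ also decomposes into $P^{\shriek}\otimes P^{\shriek}$; combined with $\Delta(P^{\shriek})\subseteq P^{\shriek}\otimes P^{\shriek}$, this yields $\Delta(Q)\subseteq P^{\shriek}\otimes P^{\shriek}\subseteq Q\otimes Q$. Coassociativity of $\Delta|_{Q}$ is inherited from the ambient cofree coproperad, so $Q$ is a sub-co(di/pr)operad of $\Ff(T[1])$. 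Because $x$ and all its $\bS$-translates are homogeneous of weight $k\geq 3$, they contribute nothing in weight $2$, so $Q\cap\Ff^{(2)}(T[1])=P^{\shriek}\cap\Ff^{(2)}(T[1])=R[2]$. The maximality in Definition~\ref{defn.quaddual} then forces $Q\subseteq P^{\shriek}$, hence $x\in P^{\shriek}$. The one delicate step is the initial bookkeeping to show that the signs of Remark~\ref{remark.signs2} do not impede reading off a per-term subspace-membership assertion from $\partial(x\iota^{-1})\in \B P^{\shriek}$; after that, the argument is just the universal property of $P^{\shriek}$.
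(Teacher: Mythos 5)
Your argument is correct and follows essentially the same route as the paper's: both hinge on the observation (citing that a co(di/pr)operad is determined by its cobar construction together with the inclusion of generators) that the summands of $\partial(x\iota^{-1})$ record, without cancellation, the binary decompositions of $x$, combined with the maximality clause in Definition~\ref{defn.quaddual}. Your explicit construction of the sub-co(di/pr)operad $Q=P^{\shriek}+\bR\bS\cdot x$ to which maximality is applied fills in a step that the paper states tersely, and is a clean way to make the backward implication precise.
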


\begin{proof}
As in more familiar cases, a co(di/pr)operad $Q$ is entirely determined by its cobar construction $\B Q$ along with the inclusion $Q[-1] \mono \B Q$ of (non-dg) $\bS$-bimodules~\cite{MR2320654}.  In particular, the different binary decompositions of an element $x\in Q$ correspond (without cancelations) to the summands appearing in $\partial(x\iota^{-1})$, where $x\iota^{-1} \in Q[-1]$ is the corresponding element of the shifted $\bS$-bimodule.

Suppose that $x\in \Ff^{(k)}(T[1])$ with $k\geq 3$.  Definition~\ref{defn.quaddual} then says that $x\in P^{\shriek}$ if and only if all binary decompositions of $x$ are elements of $\Ff^{(2)}(P^{\shriek})$.  (The binary decompositions of $x$ consist of elements of individual weights, for the $\bR^{\times}$ action on $T$, each less than $k$, and so whether or not they are in $P^{\shriek}$ has been determined by induction; in weights $1$ and $2$ one has $T[1]$ and $R[2]$ respectively.)   
By unpacking the definition of the cobar construction, one sees that
this happens if and only if $\partial(x \iota^{-1}) \in \B(P^{\shriek})$.
\end{proof}

The (di/pr)operad $\B Q$ is an example of a \define{quasifree} (di/pr)operad, which more generally is any dg (di/pr)operad which would be free if one were to forget its differential.

\begin{lemma}\label{lemma.kos}
  Let $P$ be a  quadratic (di/pr)operad, with generators $T$ and relations $R$. Then $\B P^{\shriek}$ is cofibrant and fibers over $P$.
\end{lemma}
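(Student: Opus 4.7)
The plan is to prove the two claims separately: cofibrancy is immediate from the general theory, while constructing the fibration reduces to a short derivation-check at generators.

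For cofibrancy, observe that by construction $\B P^{\shriek} = \Ff(P^{\shriek}[-1])$ as a graded (di/pr)operad, equipped with the cobar differential extending the decomposition on $P^{\shriek}$. In other words, $\B P^{\shriek}$ is quasifree, so by Remark~\ref{remark.modelcategorystructure} (citing \cite[Corollary~40]{MR2572248}) it is cofibrant.

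For the map to $P$, I will exploit that a (di/pr)operad morphism out of a quasifree object is determined by its restriction to the generating $\bS$-bimodule. The $\bR^{\times}$-grading on $T$ extends to a weight grading on $P^{\shriek} \subseteq \Ff(T[1])$ and hence on $P^{\shriek}[-1]$. In weight $1$ one has $P^{\shriek}[-1]^{(1)} = T[1][-1] = T$; define $f : \B P^{\shriek} \to P$ on this summand to be the canonical composition $T \hookrightarrow \Ff(T) \twoheadrightarrow P$, and define $f$ to be zero on all generators of weight $\geq 2$. Extend freely to a homomorphism of (di/pr)operads. Surjectivity is then automatic, since the image contains $T$ and $T$ generates $P$; and surjections are exactly the fibrations in the model structure from Remark~\ref{remark.modelcategorystructure}.

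The only thing to verify is that $f$ respects the differential. Since $P$ carries the zero differential and $\partial$ is a derivation on $\B P^{\shriek}$, this reduces to checking $f(\partial(x\iota^{-1})) = 0$ for $x \in P^{\shriek,(k)}$. For $k=1$ the decomposition of $x \in T[1]$ is empty so $\partial(x\iota^{-1}) = 0$. For $k \geq 3$, every binary decomposition of $x$ has both factors of weight $\geq 1$ and at least one factor of weight $\geq 2$, which $f$ sends to zero. The substantive case is $k=2$: by Definition~\ref{defn.quaddual} we have $P^{\shriek,(2)} = R[2]$, and by the description in Lemma~\ref{lemma.cobarinductive} and Remark~\ref{remark.signs2} the cobar differential applied to $r[2]\iota^{-1}$ (for $r \in R$) recovers precisely $r$ itself, viewed inside $\Ff^{(2)}(T[1][-1]) = \Ff^{(2)}(T) \subseteq \Ff^{(2)}(P^{\shriek}[-1])$. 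Under $f$ this image in $\Ff^{(2)}(T)$ is sent to the image of $r$ in $P = \Ff(T)/\langle R\rangle$, which is zero by definition of $P$.

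The only genuine subtlety is bookkeeping around signs: one must check that the sign convention of Remark~\ref{remark.signs2} is consistent enough that the weight-$2$ output of $\partial$ on a generator in $R[2]$ is, up to an overall consistent rescaling, the same element $r \in R$ that one started with. This is essentially forced by the compatibility between the isomorphism $V[1] \otimes W[1] \cong (V \otimes W)[2]$ from Remark~\ref{remark.signs} and the decomposition convention of Remark~\ref{remark.signs2}, so there is nothing deep to overcome, but it is where care is required.
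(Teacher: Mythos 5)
Your proof is correct and follows essentially the same route as the paper's: cite cofibrancy of quasifree (di/pr)operads, define the map on generators by identity on $T$ and zero elsewhere, and verify well-definedness by observing that the weight-$2$ generators are $R[1]$ whose derivatives land in the image of $R$ in $P$. You are somewhat more explicit than the paper about why weight $\geq 3$ poses no obstruction and why the resulting map is surjective (hence a fibration), but these are the details the paper leaves implicit rather than a different approach.
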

\begin{proof}
  Cofibrancy follows from~\cite[Corollary~40]{MR2572248}.  To define the map $\B P^{\shriek} \to P$, it suffices to define the action on generators $P^{\shriek}[-1] \subseteq \Ff(T[1])[-1]$.  We declare that $T[1][-1] \subseteq P^{\shriek}[-1]$ maps by the identity to $T \subseteq P$, and that all other generators map to $0$.  To check that this is well-defined, it suffices to check that the derivatives of the generators in $\Ff^{(2)}(T[1])[-1]$ get mapped to $0$.  But these generators are precisely a copy of $R[1]$, and differentiating and mapping gives the (vanishing) image of $R$ in $P = \Ff(T)/\<R\>$.
\end{proof}

\begin{definition}
  A quadratic (di/pr)operad is \define{Koszul} if the canonical fibration $\B P^{\shriek} \epi P$ from Lemma~\ref{lemma.kos} is acyclic, in which case $\B P^{\shriek}$ is a cofibrant replacement of $P$. 
  When $P$ is Koszul, we let $\sh P = \B P^{\shriek}$,  and call $\sh P$-algebras \define{strong homotopy $P$-algebras}.
\end{definition}

For any (di/pr)operad $P$ satisfying some mild finite-dimensionality assumptions, the (di/pr)operad $\B((\B P^{*})^{*})$ is always a cofibrant replacement of $P$.  (The second dual should be taken relative to the grading induced by the $\bR^{\times}$ action on the $\bS$-bimodule $P$.)  The point is that $\B P^{\shriek}$ is generally much smaller than $\B((\B P^{*})^{*})$, and hence more manageable.

The main result of this section says that $\Frob_{1}$ is Koszul, as both a dioperad and as a properad:

\begin{proposition}
  The (di/pr)operad $\Frob_{1}$ of open and coopen commutative Frobenius algebras has the following quadratic presentation, with respect to which it is Koszul.  The generating $\bS$-bimodule $T$ is spanned by:
    $$\underbrace{
  \,\tikz[baseline=(basepoint)]{ 
    \path (0,0) coordinate (basepoint) (0,4pt) node[dot] {};
    \draw[onearrow](0,-6pt) -- (0,4pt);
    \draw[onearrow](0,4pt) -- (-8pt,14pt);
    \draw[onearrow](0,4pt) -- (8pt,14pt);
  }\,
  = 
  \,\tikz[baseline=(basepoint)]{ 
    \path (0,0) coordinate (basepoint) (0,4pt) node[dot] {};
    \draw[onearrow](0,-6pt) -- (0,4pt);
    \draw[](0,4pt) .. controls +(8pt,4pt) and +(8pt,-4pt) .. (-8pt,14pt);
    \draw[](0,4pt) .. controls +(-8pt,4pt) and +(-8pt,-4pt) .. (8pt,14pt);
  }\,
  ,}_{\text{cohomological degree $0$}} \quad 
  \underbrace{\,\tikz[baseline=(basepoint)]{ 
    \path (0,0) coordinate (basepoint) (0,4pt) node[dot] {};
    \draw[onearrow](-8pt,-6pt) -- (0,4pt);
    \draw[onearrow](8pt,-6pt) -- (0,4pt);
    \draw[onearrow](0,4pt) -- (0,14pt);
  }\,
  = -
  \,\tikz[baseline=(basepoint)]{ 
    \path (0,0) coordinate (basepoint) (0,4pt) node[dot] {};
    \draw[] (-8pt,-6pt) .. controls +(8pt,4pt) and +(8pt,-4pt) .. (0,4pt);
    \draw[] (8pt,-6pt) .. controls +(-8pt,4pt) and +(-8pt,-4pt) .. (0,4pt);
    \draw[onearrow](0,4pt) -- (0,14pt);
  }\,}_{\text{cohomological degree $1$}}
  $$
  A basis for the relations $R$ is:
  \begin{gather*}
  \,\tikz[baseline=(basepoint)]{ 
    \path (0,3pt) coordinate (basepoint) (0,1pt) node[dot] {} (-8pt,11pt) node[dot] {};
    \draw[](0,-9pt) -- (0,1pt);
    \draw[](0,1pt) -- (-8pt,11pt);
    \draw[](-8pt,11pt) -- (-12pt,21pt);
    \draw[](-8pt,11pt) -- (0pt,21pt);
    \draw[](0,1pt) -- (12pt,21pt);
  }\,
  -
  \,\tikz[baseline=(basepoint)]{ 
    \path (0,3pt) coordinate (basepoint) (0,1pt) node[dot] {} (8pt,11pt) node[dot] {};
    \draw[](0,-9pt) -- (0,1pt);
    \draw[](0,1pt) -- (8pt,11pt);
    \draw[](0,1pt) -- (-12pt,21pt);
    \draw[](8pt,11pt) -- (0pt,21pt);
    \draw[](8pt,11pt) -- (12pt,21pt);
  }\,
  ,\; 
  \,\tikz[baseline=(basepoint)]{ 
    \path (0,3pt) coordinate (basepoint) (0,1pt) node[dot] {} (-8pt,11pt) node[dot] {};
    \draw[](0,-9pt) -- (0,1pt);
    \draw[](0,1pt) -- (-8pt,11pt);
    \draw[](-8pt,11pt) -- (-12pt,21pt);
    \draw[](-8pt,11pt) -- (0pt,21pt);
    \draw[](0,1pt) -- (12pt,21pt);
  }\,
  -
  \,\tikz[baseline=(basepoint)]{ 
    \path (0,3pt) coordinate (basepoint) (0,1pt) node[dot] {} (8pt,11pt) node[dot] {};
    \draw[](0,-9pt) -- (0,1pt);
    \draw[](0,1pt) -- (8pt,11pt);
    \draw[](0,1pt) -- (0pt,21pt);
    \draw[](8pt,11pt) -- (-12pt,21pt);
    \draw[](8pt,11pt) -- (12pt,21pt);
  }\,
  ,\;
  \,\tikz[baseline=(basepoint)]{ 
    \path (0,3pt) coordinate (basepoint) (8pt,1pt) node[dot] {} (0,11pt) node[dot] {};
    \draw[](-12pt,-9pt) -- (0,11pt);
    \draw[](0pt,-9pt) -- (8pt,1pt);
    \draw[](12pt,-9pt) -- (8pt,1pt);
    \draw[](8pt,1pt) -- (0,11pt);
    \draw[](0,11pt) -- (0,21pt);
  }\,
  +
  \,\tikz[baseline=(basepoint)]{ 
    \path (0,3pt) coordinate (basepoint) (-8pt,1pt) node[dot] {} (0,11pt) node[dot] {};
    \draw[](12pt,-9pt) -- (0,11pt);
    \draw[](-12pt,-9pt) -- (-8pt,1pt);
    \draw[](0pt,-9pt) -- (-8pt,1pt);
    \draw[](-8pt,1pt) -- (0,11pt);
    \draw[](0,11pt) -- (0,21pt);
  }\,
  ,\; 
  \,\tikz[baseline=(basepoint)]{ 
    \path (0,3pt) coordinate (basepoint) (8pt,1pt) node[dot] {} (0,11pt) node[dot] {};
    \draw[](-12pt,-9pt) -- (0,11pt);
    \draw[](0pt,-9pt) -- (8pt,1pt);
    \draw[](12pt,-9pt) -- (8pt,1pt);
    \draw[](8pt,1pt) -- (0,11pt);
    \draw[](0,11pt) -- (0,21pt);
  }\,
  +
  \,\tikz[baseline=(basepoint)]{ 
    \path (0,3pt) coordinate (basepoint) (8pt,1pt) node[dot] {} (0,11pt) node[dot] {};
    \draw[](0pt,-9pt) -- (0,11pt);
    \draw[](12pt,-9pt) -- (8pt,1pt);
    \draw[](-12pt,-9pt) -- (8pt,1pt);
    \draw[](8pt,1pt) -- (0,11pt);
    \draw[](0,11pt) -- (0,21pt);
  }\,
  ,\; 
  \\
  \,\tikz[baseline=(basepoint)]{ 
    \path (0,3pt) coordinate (basepoint) (0,1pt) node[dot] {} (0,11pt) node[dot] {};
    \draw[](-8pt,-9pt) -- (0,1pt);
    \draw[](8pt,-9pt) -- (0,1pt);
    \draw[](0,1pt) -- (0,11pt);
    \draw[](0,11pt) -- (-8pt,21pt);
    \draw[](0,11pt) -- (8pt,21pt);
  }\,
  -
  \,\tikz[baseline=(basepoint)]{ 
    \path (0,3pt) coordinate (basepoint) (8pt,1pt) node[dot] {} (-8pt,11pt) node[dot] {};
    \draw[](-8pt,-9pt) -- (-8pt,11pt);
    \draw[](8pt,-9pt) -- (8pt,1pt);
    \draw[](8pt,1pt) -- (-8pt,11pt);
    \draw[](-8pt,11pt) -- (-8pt,21pt);
    \draw[](8pt,1pt) -- (8pt,21pt);
  }\,
  ,\;
  \,\tikz[baseline=(basepoint)]{ 
    \path (0,3pt) coordinate (basepoint) (0,1pt) node[dot] {} (0,11pt) node[dot] {};
    \draw[](-8pt,-9pt) -- (0,1pt);
    \draw[](8pt,-9pt) -- (0,1pt);
    \draw[](0,1pt) -- (0,11pt);
    \draw[](0,11pt) -- (-8pt,21pt);
    \draw[](0,11pt) -- (8pt,21pt);
  }\,
  -
  \,\tikz[baseline=(basepoint)]{ 
    \path (0,3pt) coordinate (basepoint) (-8pt,1pt) node[dot] {} (8pt,11pt) node[dot] {};
    \draw[](-8pt,-9pt) -- (-8pt,1pt);
    \draw[](8pt,-9pt) -- (8pt,11pt);
    \draw[](-8pt,1pt) -- (8pt,11pt);
    \draw[](-8pt,1pt) -- (-8pt,21pt);
    \draw[](8pt,11pt) -- (8pt,21pt);
  }\,
  ,\;
  \,\tikz[baseline=(basepoint)]{ 
    \path (0,3pt) coordinate (basepoint) (0,1pt) node[dot] {} (0,11pt) node[dot] {};
    \draw[](-8pt,-9pt) -- (0,1pt);
    \draw[](8pt,-9pt) -- (0,1pt);
    \draw[](0,1pt) -- (0,11pt);
    \draw[](0,11pt) -- (-8pt,21pt);
    \draw[](0,11pt) -- (8pt,21pt);
  }\,
  -
  \,\tikz[baseline=(basepoint)]{ 
    \path (0,3pt) coordinate (basepoint) (0,1pt) node[dot] {} (0,11pt) node[dot] {};
    \draw[](0,-9pt) -- (0,1pt);
    \draw[](0,1pt) -- (0,11pt);
    \draw[](0,11pt) -- (0,21pt);
    \draw[](-16pt,-9pt) -- (0,11pt);
    \draw[](0pt,1pt) -- (-16pt,21pt);
  }\,
  ,\;
  \,\tikz[baseline=(basepoint)]{ 
    \path (0,3pt) coordinate (basepoint) (0,1pt) node[dot] {} (0,11pt) node[dot] {};
    \draw[](-8pt,-9pt) -- (0,1pt);
    \draw[](8pt,-9pt) -- (0,1pt);
    \draw[](0,1pt) -- (0,11pt);
    \draw[](0,11pt) -- (-8pt,21pt);
    \draw[](0,11pt) -- (8pt,21pt);
  }\,
  -
  \,\tikz[baseline=(basepoint)]{ 
    \path (0,3pt) coordinate (basepoint) (0,1pt) node[dot] {} (0,11pt) node[dot] {};
    \draw[](0,-9pt) -- (0,1pt);
    \draw[](0,1pt) -- (0,11pt);
    \draw[](0,11pt) -- (0,21pt);
    \draw[](16pt,-9pt) -- (0,11pt);
    \draw[](0pt,1pt) -- (16pt,21pt);
  }\,
  . \end{gather*}
\end{proposition}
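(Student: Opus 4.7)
The plan is to verify the proposition in two stages: first establishing the quadratic presentation, and then establishing Koszulity (treating the dioperadic and properadic versions in parallel, with a separate check where they differ).

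For the presentation, one direction is routine: the eight listed relations obviously hold in $\Frob_1$, with the signs arising via Remark~\ref{remark.signs} from the Frobenius multiplication table spelled out in Definition~\ref{defn.frob1}. For the converse, I would exhibit a normal form. Every element of $\Ff(T)$ is (up to $\bS$-action) a directed, connected graph decorated with multiplications or comultiplications at each vertex. The top row of relations, applied repeatedly, reduces any pure-multiplication subtree and any pure-comultiplication subtree to its canonical ``corolla'' shape. The three Frobenius relations of the bottom row then allow one to migrate every comultiplication vertex above every multiplication vertex, so that each connected graph reduces to a unique expression: a single multiplication corolla stacked beneath a single comultiplication corolla. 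Comparing with the $1$-dimensional graded basis $\{e_{m,n}\}$ of $\Frob_1$, this shows the quotient map from $\Ff(T)/\<R\>$ to $\Frob_1$ is a bijection in each arity, dimension-by-dimension. The same argument works in the dioperadic and properadic settings, because the normal forms are trees in either case (both sides happen to be concentrated on connected directed \emph{trees}, with no genus showing up, thanks again to the bubble-killing Frobenius relation).

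For Koszulity, the cleanest route is the distributive law technique of \cite{MR1960128} for dioperads and its properad analogue in \cite{MR2320654}. Decompose $T = T_\mu \oplus T_\Delta$, where $T_\mu$ generates the (di/pr)operad $\Com$ of commutative associative multiplication and $T_\Delta$ generates the shifted cocommutative coassociative co(di/pr)operad; both are classically Koszul, and one knows the Koszul dual of each. The Frobenius relations constitute a distributive law $T_\mu \circ T_\Delta \to T_\Delta \circ T_\mu$ rewriting a multiplication-below-comultiplication into a sum of comultiplications-below-multiplications. I would verify that this distributive law satisfies the diamond (coherence) condition, which amounts to checking that the two different ways of reducing each three-vertex graph built from two multiplications and one comultiplication (or vice versa) agree modulo the quadratic relations. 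Given the diamond condition, the distributive law theorem produces a PBW-type basis of $\Frob_1$ matching the normal form established above, and Koszulity follows from the Koszulity of $\Com$ and its shifted co-version, together with acyclicity of the associated Koszul complex.

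The main obstacle will be the properad version of the distributive-law verification: unlike the dioperadic setting where only single-edge compositions must be checked, the properadic diamond condition involves compositions along multi-edge subgraphs, and one must confirm that no new obstruction emerges from gluing two strands between the same pair of vertices. Fortunately, the bubble-killing Frobenius relation precisely prevents such multi-edge subgraphs from appearing in the normal form, so the properadic diamond condition reduces (perhaps after a small bookkeeping argument) to the dioperadic one. With Koszulity established, Lemma~\ref{lemma.kos} provides the cofibrant replacement $\sh \Frob_1 = \B\Frob_1^\shriek$ that will be used in the subsequent sections.
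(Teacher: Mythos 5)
Your Koszulity argument is in essence the paper's: decompose the generators into $\mult$ and $\comult$, observe that each generates a Koszul (di/pr)operad ($\mathrm{Com}$ and a shear of $\mathrm{Cocom}$), and treat the Frobenius-type relations as a distributive law; the paper calls this a ``replacement rule'' and simply cites Vallette's Proposition~8.4, whereas you propose to spell out the diamond/coherence check. Either way the strategy is the same, and the one subtlety you flag --- that the properadic version requires knowing that no multi-edge compositions sneak in --- is exactly the point the paper isolates.

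However, there is a genuine gap in your treatment of that point. You attribute the vanishing of the bubble $\graphI$ (one $\comult$ feeding both of its outputs into one $\mult$, the unique genus-$1$ graph with $m=n=1$) to ``the bubble-killing Frobenius relation,'' but no such relation appears in $R$: all of the listed relations are tree-shaped (genus $0$), and the properadic ideal they generate will not produce the genus-$1$ element $\graphI$ by composing with generators. The correct reason $\graphI=0$ has nothing to do with $R$ at all: it already holds in the free properad $\Ff(T)$, because the automorphism of $\graphI$ swapping the two internal edges acts trivially on the inputs of $\mult$ (trivial $\bS_2$-representation) but by sign on the outputs of $\comult$ (sign $\bS_2$-representation), forcing $\graphI=-\graphI$. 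This is the one piece of content the paper's proof of the presentation actually supplies, and it is what makes the universal enveloping properad of the presented dioperad no larger than $\Frob_1$, and what lets the replacement-rule machinery apply without multi-edge obstructions. Your normal-form sketch for the dioperadic presentation is otherwise fine (if somewhat more laborious than the paper's ``it is clear''), but without the correct $\bS_2$-equivariance argument for $\graphI$, the passage from the dioperadic presentation to the properadic one --- and hence the properadic Koszulity --- is not established.
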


\begin{proof}
It is clear that this presentation defines the dioperad $\Frob_{1}$ and hence its universal enveloping properad.  But that universal enveloping properad is no larger, because the composition $\graphI$ vanishes, as it must transform both trivially and by the sign representation under the $\bS_{2}$-action interchanging the two interior edges.

We now check Koszulity.
  The suboperad of $\Frob_{1}$ generated by just the multiplication $\mult$ is nothing but the nonunital commutative operad $\mathrm{Com}$.  The sub(di/pr)operad generated by $\comult$ is a \define{shear} of a (di/pr)operad for cocommutative coalgebras, meaning its representations on $V$ are representations of $\mathrm{Cocom}$ on $V[1]$.  Both $\mathrm{Com}$ and $\mathrm{Cocom}$, as well as their shears, are  known to be Koszul~\cite[Theorem~8.5.7]{MR2954392}.  The second line of relations, along with the relation $
  \graphI=0$, are together a ``replacement rule'' in the sense of~\cite[8.1]{MR2320654}, and hence $\Frob_{1}$ is Koszul by~\cite[Proposition 8.4]{MR2320654}.
\end{proof}

\begin{corollary}\label{cor.shFrob}
  $\Frob_{1}$ has a cofibrant replacement $\sh\Frob_{1}$, which is quasifree with generating $\bS$-bimodule $(\Frob_{1})^{\shriek}[-1]$.  The generating $\bS$-bimodule $T$ of $\Frob_{1}$ has a bigrading by $\bigl(\#\mult,\#\comult\bigr)$, and the relations $R$ are homogeneous for this bigrading, hence this bigrading extends to $(\Frob_{1})^{\shriek}[-1]$, and the differential on $\sh\Frob_{1}$ preserves this bigrading.
  
  If we are working with dioperads, the piece $(\Frob_{1})^{\shriek}[-1](m,n)$ with $m$ inputs and $n$ outputs is homogeneous for this bigrading, with $\#\mult = m-1$ and $\#\comult = n-1$, and is entirely in cohomological degree $1-\#\mult  = 2-m$. 
  
  If we are working with properads, the piece $(\Frob_{1})^{\shriek}[-1](m,n)$ with $m$ inputs and $n$ outputs has a grading by \define{genus} $\beta$ satisfying
  $ \#\mult = \beta + m-1,$ $ \#\comult = \beta + n-1,$ and $ \text{cohomological degree} = 1- \#\mult  = 2-(\beta + m).$ 
\end{corollary}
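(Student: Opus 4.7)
The plan is to verify the corollary's four assertions in order, all leveraging the Koszulity of $\Frob_{1}$ just established. First, the existence of the cofibrant replacement is immediate: by Koszulity, the canonical surjection $\B\Frob_{1}^{\shriek} \epi \Frob_{1}$ from Lemma~\ref{lemma.kos} is acyclic, and by construction $\sh\Frob_{1} = \B\Frob_{1}^{\shriek} = \Ff\bigl(\Frob_{1}^{\shriek}[-1]\bigr)$ is quasifree with the asserted generating $\bS$-bimodule.

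Next I check that the listed relations $R$ are homogeneous for the bigrading $(\#\mult,\#\comult)$ on $T$. This is direct inspection of the displayed relations: each associativity relation for $\mult$ sits in bidegree $(2,0)$, each coassociativity relation for $\comult$ sits in bidegree $(0,2)$, and each Frobenius-compatibility relation is a linear combination of trees of bidegree $(1,1)$. Hence $R$ is bigraded, so the quotient $\Frob_{1} = \Ff(T)/\<R\>$ inherits the bigrading. Since by Definition~\ref{defn.quaddual} the quadratic dual $\Frob_{1}^{\shriek}$ is the maximal sub-co(di/pr)operad of $\Ff(T[1])$ whose weight-$2$ piece is $R[2]$, and $R[2]$ is itself bigraded, the bigrading transfers to $\Frob_{1}^{\shriek}$ and to its shift $\Frob_{1}^{\shriek}[-1]$. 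The differential on $\sh\Frob_{1}$ extends the decomposition on $\Frob_{1}^{\shriek}$; any decomposition expresses an element of $\Frob_{1}^{\shriek}$ as a two-vertex composition whose $\mult$- and $\comult$-counts sum to those of the original, so the differential preserves the bigrading.

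To derive the explicit values of $\#\mult$ and $\#\comult$ in each setting, I use a standard edge-counting argument. Let a generator correspond to a connected directed graph with $a$ vertices of type $\mult$ (each having $2$ incoming and $1$ outgoing half-edge), $b$ vertices of type $\comult$ (each having $1$ incoming and $2$ outgoing half-edges), $m$ external inputs, $n$ external outputs, and $E$ internal edges. Matching half-edge counts separately at inputs and outputs gives
\[
2a + b = m + E, \qquad a + 2b = n + E.
\]
In the dioperadic case the graph is a tree, so $E = (a+b) - 1$, and solving yields $a = m-1$ and $b = n-1$. In the properadic case the graph has genus $\beta$ (its first Betti number), so $E = (a+b) + \beta - 1$, and solving yields $a = \beta+m-1$ and $b = \beta+n-1$.

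Finally, for the cohomological degree: the generator $\mult$ of $T$ has degree $0$ and $\comult$ has degree $1$, so in $T[1]$ they have degrees $-1$ and $0$ respectively, and any element of $\Ff(T[1])$ shaped on a graph with $a$ copies of $\mult$ and $b$ copies of $\comult$ has total cohomological degree $-a$. This degree is inherited by $\Frob_{1}^{\shriek} \subseteq \Ff(T[1])$, and the further shift $[-1]$ raises it by $1$, so elements of the generating bimodule $\Frob_{1}^{\shriek}[-1]$ of $\sh\Frob_{1}$ have degree $1 - \#\mult$. Substituting $\#\mult = m-1$ or $\#\mult = \beta+m-1$ recovers the formulas $2-m$ and $2-(\beta+m)$. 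No step in this corollary is delicate; all the real work was done in establishing Koszulity of $\Frob_{1}$, and the remaining claims are bookkeeping downstream of it.
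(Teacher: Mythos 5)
Your proposal is correct and follows the same route the paper intends: the paper's proof is literally ``definition-unpacking and elementary combinatorics,'' and your half-edge count $2a+b=m+E$, $a+2b=n+E$ together with $E=a+b-1+\beta$ is exactly that combinatorics made explicit. The degree bookkeeping ($\mult$ in degree $-1$ after the shift into $T[1]$, then $[-1]$ raising degree by one) also matches the paper's conventions and checks out against the listed generators such as the genus-one and genus-two graphs.
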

\begin{proof}
  The formulas follow from definition-unpacking and elementary combinatorics.
\end{proof}

We conclude this section by describing the generators $(\Frob_{1})^{\shriek}[-1]$ of $\sh\Frob_{1}$ for small $\#\mult + \#\comult$.  We will record the representations of the symmetric group using the usual Young diagrams, placed under a generator of that representation. 
  The $\#\mult + \#\comult = 1$ piece of $(\Frob_{1})^{\shriek}[-1]$ is a copy of the generators $T$, and decomposes as:
$$
  \underset{ \tikz \draw (0,0) rectangle (4pt,4pt) (4pt,0pt) rectangle (8pt,4pt);  \,\otimes\, \tikz \draw (0,0) rectangle (4pt,4pt);}{
  \,\tikz[baseline=(main.base)]{
    \node[draw,rectangle,inner sep=2pt] (main) {\mult};
    \draw (main.south) -- ++(0,-8pt);
    \draw (main.north) ++(-6pt,0) -- ++(0,8pt);
    \draw (main.north) ++(6pt,0) -- ++(0,8pt);
  }\,
  }
  \;\oplus\;
\underset{\tikz \draw (0,0) rectangle (4pt,4pt); \,\otimes\, \tikz \draw (0,0) rectangle (4pt,4pt) (0pt,4pt) rectangle (4pt,8pt); }{
  \,\tikz[baseline=(main.base)]{
    \node[draw,rectangle,inner sep=2pt] (main) {\comult};
    \draw (main.north) -- ++(0,8pt);
    \draw (main.south) ++(-6pt,0) -- ++(0,-8pt);
    \draw (main.south) ++(6pt,0) -- ++(0,-8pt);
  }\,
  }
  $$
  
The $\#\mult + \#\comult = 2$ piece is a copy of $R[2][-1]$, decomposing as:
$$
\underset{\tikz \draw (0,0) rectangle (4pt,4pt) (4pt,0pt) rectangle (8pt,4pt) (0pt,-4pt)  rectangle  (4pt,0pt) ;\,\otimes\,   \tikz \draw (0,0) rectangle (4pt,4pt);  }{
   \,\tikz[baseline=(main.base)]{
    \node[draw,rectangle,inner sep=2pt] (main) {$
       \,\tikz[baseline=(basepoint)]{ 
    \path (0,3pt) coordinate (basepoint) (0,1pt) node[dot] {} (-8pt,11pt) node[dot] {};
    \draw[](0,-9pt) -- (0,1pt);
    \draw[](0,1pt) -- (-8pt,11pt);
    \draw[](-8pt,11pt) -- (-12pt,21pt);
    \draw[](-8pt,11pt) -- (0pt,21pt);
    \draw[](0,1pt) -- (12pt,21pt);
  }\,
  -
  \,\tikz[baseline=(basepoint)]{ 
    \path (0,3pt) coordinate (basepoint) (0,1pt) node[dot] {} (8pt,11pt) node[dot] {};
    \draw[](0,-9pt) -- (0,1pt);
    \draw[](0,1pt) -- (8pt,11pt);
    \draw[](0,1pt) -- (-12pt,21pt);
    \draw[](8pt,11pt) -- (0pt,21pt);
    \draw[](8pt,11pt) -- (12pt,21pt);
  }\,
    $};
    \draw (main.south) -- ++(0,-8pt);
    \draw (main.north) ++(-12pt,0) -- ++(0,8pt);
    \draw (main.north) ++(0pt,0) -- ++(0,8pt);
    \draw (main.north) ++(12pt,0) -- ++(0,8pt);
  }\,
  }
  \;\oplus\;
\underset{\mathbf 2 \,\otimes\,  \mathbf 2  }{
   \,\tikz[baseline=(main.base)]{
    \node[draw,rectangle,inner sep=2pt] (main) {$
         \,\tikz[baseline=(basepoint)]{ 
    \path (0,3pt) coordinate (basepoint) (0,1pt) node[dot] {} (0,11pt) node[dot] {};
    \draw[](-8pt,-9pt) -- (0,1pt);
    \draw[](8pt,-9pt) -- (0,1pt);
    \draw[](0,1pt) -- (0,11pt);
    \draw[](0,11pt) -- (-8pt,21pt);
    \draw[](0,11pt) -- (8pt,21pt);
  }\,
  +
  \,\tikz[baseline=(basepoint)]{ 
    \path (0,3pt) coordinate (basepoint) (8pt,1pt) node[dot] {} (-8pt,11pt) node[dot] {};
    \draw[](-8pt,-9pt) -- (-8pt,11pt);
    \draw[](8pt,-9pt) -- (8pt,1pt);
    \draw[](8pt,1pt) -- (-8pt,11pt);
    \draw[](-8pt,11pt) -- (-8pt,21pt);
    \draw[](8pt,1pt) -- (8pt,21pt);
  }\,
    $};
    \draw (main.south) ++(-6pt,0) -- ++(0pt,-8pt);
    \draw (main.south) ++(6pt,0) -- ++(0pt,-8pt);
    \draw (main.north) ++(-6pt,0) -- ++(0,8pt);
    \draw (main.north) ++(6pt,0) -- ++(0,8pt);
  }\,
  }
  \;\oplus\;
  \underset{\tikz \draw (0,0) rectangle (4pt,4pt);  \,\otimes\, \tikz \draw (0,0) rectangle (4pt,4pt) (4pt,0pt) rectangle (8pt,4pt) (0pt,-4pt)  rectangle  (4pt,0pt) ;  }{
  \,\tikz[baseline=(main.base)]{
    \node[draw,rectangle,inner sep=2pt] (main) {$
   -   \,\tikz[baseline=(basepoint)]{ 
    \path (0,3pt) coordinate (basepoint) (8pt,1pt) node[dot] {} (0,11pt) node[dot] {};
    \draw[](-12pt,-9pt) -- (0,11pt);
    \draw[](0pt,-9pt) -- (8pt,1pt);
    \draw[](12pt,-9pt) -- (8pt,1pt);
    \draw[](8pt,1pt) -- (0,11pt);
    \draw[](0,11pt) -- (0,21pt);
  }\,
  -
  \,\tikz[baseline=(basepoint)]{ 
    \path (0,3pt) coordinate (basepoint) (-8pt,1pt) node[dot] {} (0,11pt) node[dot] {};
    \draw[](12pt,-9pt) -- (0,11pt);
    \draw[](-12pt,-9pt) -- (-8pt,1pt);
    \draw[](0pt,-9pt) -- (-8pt,1pt);
    \draw[](-8pt,1pt) -- (0,11pt);
    \draw[](0,11pt) -- (0,21pt);
  }\,
    $};
    \draw (main.north) -- ++(0,8pt);
    \draw (main.south) ++(-12pt,0) -- ++(0,-8pt);
    \draw (main.south) ++(0pt,0) -- ++(0,-8pt);
    \draw (main.south) ++(12pt,0) -- ++(0,-8pt);
  }\,
  }
$$
In the middle summand, $\mathbf 2 = \tikz \draw (0,0) rectangle (4pt,4pt) (4pt,0pt) rectangle (8pt,4pt); \oplus \tikz \draw (0,0) rectangle (4pt,4pt) (0pt,4pt) rectangle (4pt,8pt);$ denotes the two-dimensional permutation representation of $\bS_{2}$.  
The signs stem from the conventions discussed after Definition~\ref{defn.quaddual}.  They are essentially notational.  

We will refer to these three generators as ``the homotopies controlling'' associativity, the Frobenius relation, and coassociativity, respectively.  Tracking signs, their derivatives are:
\begin{gather} \label{eqn.associator}
  \partial\left(
     \,\tikz[baseline=(main.base)]{
    \node[draw,rectangle,inner sep=2pt] (main) {$
       \,\tikz[baseline=(basepoint)]{ 
    \path (0,3pt) coordinate (basepoint) (0,1pt) node[dot] {} (-8pt,11pt) node[dot] {};
    \draw[](0,-9pt) -- (0,1pt);
    \draw[](0,1pt) -- (-8pt,11pt);
    \draw[](-8pt,11pt) -- (-12pt,21pt);
    \draw[](-8pt,11pt) -- (0pt,21pt);
    \draw[](0,1pt) -- (12pt,21pt);
  }\,
  -
  \,\tikz[baseline=(basepoint)]{ 
    \path (0,3pt) coordinate (basepoint) (0,1pt) node[dot] {} (8pt,11pt) node[dot] {};
    \draw[](0,-9pt) -- (0,1pt);
    \draw[](0,1pt) -- (8pt,11pt);
    \draw[](0,1pt) -- (-12pt,21pt);
    \draw[](8pt,11pt) -- (0pt,21pt);
    \draw[](8pt,11pt) -- (12pt,21pt);
  }\,
    $};
    \draw (main.south) -- ++(0,-8pt);
    \draw (main.north) ++(-12pt,0) -- ++(0,8pt);
    \draw (main.north) ++(0pt,0) -- ++(0,8pt);
    \draw (main.north) ++(12pt,0) -- ++(0,8pt);
  }\,
  \right) = 
  \;-\;
  \,\tikz[baseline=(somewhere)]{ 
    \path (0,10pt) coordinate (somewhere);
    \path (0pt,0pt) node[draw,inner sep=2pt,fill=white] (a) {\mult} (-10pt,25pt) node[draw,inner sep=2pt,fill=white] (b) {\mult};
    \draw[](b) -- (a);
    \draw[](a) -- (0,-20pt);
    \draw[](a) -- (20pt,45pt);
    \draw[](b) -- (-20pt,45pt);
    \draw[](b) -- (0pt,45pt);
  }\,
  \;+\;
  \,\tikz[baseline=(somewhere)]{ 
    \path (0,10pt) coordinate (somewhere);
    \path (0pt,0pt) node[draw,inner sep=2pt,fill=white] (a) {\mult} (10pt,25pt) node[draw,inner sep=2pt,fill=white] (b) {\mult};
    \draw[](b) -- (a);
    \draw[](a) -- (0,-20pt);
    \draw[](a) -- (-20pt,45pt);
    \draw[](b) -- (20pt,45pt);
    \draw[](b) -- (0pt,45pt);
  }\,
  \\ \label{eqn.frobeniator}
  \partial \left(
    \,\tikz[baseline=(main.base)]{
    \node[draw,rectangle,inner sep=2pt] (main) {$
         \,\tikz[baseline=(basepoint)]{ 
    \path (0,3pt) coordinate (basepoint) (0,1pt) node[dot] {} (0,11pt) node[dot] {};
    \draw[](-8pt,-9pt) -- (0,1pt);
    \draw[](8pt,-9pt) -- (0,1pt);
    \draw[](0,1pt) -- (0,11pt);
    \draw[](0,11pt) -- (-8pt,21pt);
    \draw[](0,11pt) -- (8pt,21pt);
  }\,
  +
  \,\tikz[baseline=(basepoint)]{ 
    \path (0,3pt) coordinate (basepoint) (8pt,1pt) node[dot] {} (-8pt,11pt) node[dot] {};
    \draw[](-8pt,-9pt) -- (-8pt,11pt);
    \draw[](8pt,-9pt) -- (8pt,1pt);
    \draw[](8pt,1pt) -- (-8pt,11pt);
    \draw[](-8pt,11pt) -- (-8pt,21pt);
    \draw[](8pt,1pt) -- (8pt,21pt);
  }\,
    $};
    \draw (main.south) ++(-6pt,0) -- ++(0pt,-8pt);
    \draw (main.south) ++(6pt,0) -- ++(0pt,-8pt);
    \draw (main.north) ++(-6pt,0) -- ++(0,8pt);
    \draw (main.north) ++(6pt,0) -- ++(0,8pt);
  }\,
  \right) = 
  \,\tikz[baseline=(somewhere)]{ 
    \path (0,10pt) coordinate (somewhere);
    \path (0pt,0pt) node[draw,inner sep=2pt,fill=white] (a) {\comult} (0pt,25pt) node[draw,inner sep=2pt,fill=white] (b) {\mult};
    \draw[](b) -- (a);
    \draw[](b) -- (-10pt,45pt);
    \draw[](b) -- (10pt,45pt);
    \draw[](a) -- (10pt,-20pt);
    \draw[](a) -- (-10pt,-20pt);
  }\,
  \;-\; 
  \,\tikz[baseline=(somewhere)]{ 
    \path (0,10pt) coordinate (somewhere);
    \path (20pt,0pt) node[draw,inner sep=2pt,fill=white] (a) {\mult} (0pt,25pt) node[draw,inner sep=2pt,fill=white] (b) {\comult};
    \draw[](b) -- (a);
    \draw[](b) -- (0pt,45pt);
    \draw[](a) -- (20pt,45pt);
    \draw[](a) -- (20pt,-20pt);
    \draw[](b) -- (0pt,-20pt);
  }\,
  \\ \label{eqn.coassociator}
  \partial \left(
    \,\tikz[baseline=(main.base)]{
    \node[draw,rectangle,inner sep=2pt] (main) {$
   -   \,\tikz[baseline=(basepoint)]{ 
    \path (0,3pt) coordinate (basepoint) (8pt,1pt) node[dot] {} (0,11pt) node[dot] {};
    \draw[](-12pt,-9pt) -- (0,11pt);
    \draw[](0pt,-9pt) -- (8pt,1pt);
    \draw[](12pt,-9pt) -- (8pt,1pt);
    \draw[](8pt,1pt) -- (0,11pt);
    \draw[](0,11pt) -- (0,21pt);
  }\,
  -
  \,\tikz[baseline=(basepoint)]{ 
    \path (0,3pt) coordinate (basepoint) (-8pt,1pt) node[dot] {} (0,11pt) node[dot] {};
    \draw[](12pt,-9pt) -- (0,11pt);
    \draw[](-12pt,-9pt) -- (-8pt,1pt);
    \draw[](0pt,-9pt) -- (-8pt,1pt);
    \draw[](-8pt,1pt) -- (0,11pt);
    \draw[](0,11pt) -- (0,21pt);
  }\,
    $};
    \draw (main.north) -- ++(0,8pt);
    \draw (main.south) ++(-12pt,0) -- ++(0,-8pt);
    \draw (main.south) ++(0pt,0) -- ++(0,-8pt);
    \draw (main.south) ++(12pt,0) -- ++(0,-8pt);
  }\,
  \right) = 
  \;-\;
  \,\tikz[baseline=(somewhere)]{ 
    \path (0,10pt) coordinate (somewhere);
    \path (10pt,0pt) node[draw,inner sep=2pt,fill=white] (a) {\comult} (0pt,25pt) node[draw,inner sep=2pt,fill=white] (b) {\comult};
    \draw[](b) -- (a);
    \draw[](b) -- (0,45pt);
    \draw[](a) -- (20pt,-20pt);
    \draw[](b) -- (-20pt,-20pt);
    \draw[](a) -- (0pt,-20pt);
  }\,
  \;-\;
  \,\tikz[baseline=(somewhere)]{ 
    \path (0,10pt) coordinate (somewhere);
    \path (-10pt,0pt) node[draw,inner sep=2pt,fill=white] (a) {\comult} (0pt,25pt) node[draw,inner sep=2pt,fill=white] (b) {\comult};
    \draw[](b) -- (a);
    \draw[](b) -- (0,45pt);
    \draw[](b) -- (20pt,-20pt);
    \draw[](a) -- (-20pt,-20pt);
    \draw[](a) -- (0pt,-20pt);
  }\,
\end{gather}
The signs are because, in $(\Frob_{1})^{\shriek}$, we have $\deg\bigl(\mult\bigr) = -1$ and $\deg\bigl(\comult\bigr) = 0$, and we always collect a sign for the unboxed degree of the bottom vertex.

When $\#\mult + \#\comult \geq 3$, the dioperadic and properadic versions of $(\Frob_{1})^{\shriek}[-1]$ diverge.  The dioperadic version appears as the genus $\beta = 0$ direct summand of the properadic version.  
 Of the  $\#\mult + \#\comult = 3$ part of $(\Frob_{1})^{\shriek}[-1]$, we will compute just the summand with genus $\beta = 1$.  Any composition of $\mult$ and $\comult$ with genus $\beta \geq 1$ has $\#\mult \geq 1$ and $\#\comult \geq 1$.  Since $
  \graphI=0$, the only graphs with $\beta = 1$ and $\bigl(\#\mult,\#\comult\bigr) = (1,2)$ are 
{\graphD}
and
\reflectbox{\graphD}.  Each of these is in $(\Frob_{1})^{\shriek}[-1]$, so that the $\#\mult + \#\comult = 3$ part of the properadic $(\Frob_{1})^{\shriek}[-1]$ is:
$$ \bigl(\beta = 0 \text{ part}\bigr) \;\oplus\;
\underset{ \mathbf 2 \,\otimes\, \tikz \draw (0,0) rectangle (4pt,4pt); }{
  \graphDbox
}
\;\oplus\;
\underset{\tikz \draw (0,0) rectangle (4pt,4pt);\,\otimes\,  \mathbf 2 }{
  \graphAbox  }
$$
 
To verify this claim, one must compute 
  $\partial\left(
  \graphDbox
  \right)$ and $\partial\left(
  \graphAbox
  \right)$ in $\B(\Ff(T[1]))$, and see that in fact each derivative lands in the subproperad $\sh\Frob_{1}= \B\bigl((\Frob_{1})^{\shriek}\bigr)$.  As we will also need these derivatives later, we will work out the derivatives carefully, keeping track of signs.  
  
  We first claim:
  \begin{equation}\label{Dderivative}
  \partial\left(
  \graphDbox
  \right) = 
  -
  \;\begin{tikzpicture}[baseline=(outerbasepoint)]
  \node[draw,inner sep=2pt] (box) {
    $
    \,\tikz[baseline=(basepoint)]{ 
      \path (0,3pt) coordinate (basepoint) (0,1pt) node[dot] {} (-8pt,11pt) node[dot] {};
      \draw[](0,-9pt) -- (0,1pt);
      \draw[](0,1pt) -- (-8pt,11pt);
      \draw[](-8pt,11pt) -- (-12pt,21pt);
      \draw[](-8pt,11pt) -- (0pt,21pt);
      \draw[](0,1pt) -- (12pt,21pt);
    }\,
    -
    \,\tikz[baseline=(basepoint)]{ 
      \path (0,3pt) coordinate (basepoint) (0,1pt) node[dot] {} (8pt,11pt) node[dot] {};
      \draw[](0,-9pt) -- (0,1pt);
      \draw[](0,1pt) -- (8pt,11pt);
      \draw[](0,1pt) -- (-12pt,21pt);
      \draw[](8pt,11pt) -- (0pt,21pt);
      \draw[](8pt,11pt) -- (12pt,21pt);
    }\,
    $};
  \draw (box.south) -- ++(0,-10pt) coordinate (bottom);
  \draw (box.north) -- ++(0pt,10pt);
  \draw (box.north) ++(-12pt,0) -- ++(0pt,10pt);
  \draw (box.north) ++(-6pt,18pt) node[draw,inner sep=2pt,fill=white] (upperbox) {\comult};
  \draw (upperbox.north) -- ++(0,10pt) ++(18pt,0) coordinate (top);
  \draw (box.north) ++(12pt,0) -- (top);
  \path (bottom) ++(0,40pt) coordinate (outerbasepoint);  
  \end{tikzpicture}\;
  \;-\;
  \;\begin{tikzpicture}[baseline=(outerbasepoint)]
  \node[draw,inner sep=2pt] (box) {
    $
    \,\tikz[baseline=(basepoint)]{ 
      \path (0,3pt) coordinate (basepoint) (0,1pt) node[dot] {} (0,11pt) node[dot] {};
      \draw[](-8pt,-9pt) -- (0,1pt);
      \draw[](8pt,-9pt) -- (0,1pt);
      \draw[](0,1pt) -- (0,11pt);
      \draw[](0,11pt) -- (-8pt,21pt);
      \draw[](0,11pt) -- (8pt,21pt);
    }\,
    +
    \,\tikz[baseline=(basepoint)]{ 
      \path (0,3pt) coordinate (basepoint) (8pt,1pt) node[dot] {} (-8pt,11pt) node[dot] {};
      \draw[](-8pt,-9pt) -- (-8pt,11pt);
      \draw[](8pt,-9pt) -- (8pt,1pt);
      \draw[](8pt,1pt) -- (-8pt,11pt);
      \draw[](-8pt,11pt) -- (-8pt,21pt);
      \draw[](8pt,1pt) -- (8pt,21pt);
    }\,
    $};
  \draw (box.north) ++(6pt,0) -- ++(0,10pt) coordinate(top);
  \draw (box.north) ++(-6pt,0) -- ++(0,10pt);
  \draw (box.south) ++(6pt,0) -- ++(0,-10pt);
  \draw (box.south) ++(-6pt,0) -- ++(0,-10pt);
  \draw (box.south) ++(0,-18pt) node[draw,inner sep=2pt,fill=white] (lowerbox) {\mult};
  \draw (lowerbox.south) -- ++(0,-10pt) coordinate (bottom);
  \path (bottom) ++(0,40pt) coordinate (outerbasepoint);  
  \end{tikzpicture}\;.
\end{equation}
In both summands, we use the fact that $\graphI = 0$.
The sign on the first summand does not come from Remark~\ref{remark.signs2}, since the inside of the bottom box has cohomological degree $-2$ in $(\Frob_{1})^{\shriek}$; the sign there outside the diagram cancels the sign inside.  The sign on the second summand does stem from Remark~\ref{remark.signs2}: in $(\Frob_{1})^{\shriek}$, $\deg\bigl(\mult\bigr) = -1$.

Second, we claim:
\begin{equation}
  \label{Aderivative}
  \partial\left(
  \graphAbox
  \right) = 
  -
  \;\begin{tikzpicture}[baseline=(outerbasepoint)]
  \node[draw,inner sep=2pt] (box) {
    $
    \,\tikz[baseline=(basepoint)]{ 
      \path (0,3pt) coordinate (basepoint) (0,1pt) node[dot] {} (0,11pt) node[dot] {};
      \draw[](-8pt,-9pt) -- (0,1pt);
      \draw[](8pt,-9pt) -- (0,1pt);
      \draw[](0,1pt) -- (0,11pt);
      \draw[](0,11pt) -- (-8pt,21pt);
      \draw[](0,11pt) -- (8pt,21pt);
    }\,
    +
    \,\tikz[baseline=(basepoint)]{ 
      \path (0,3pt) coordinate (basepoint) (8pt,1pt) node[dot] {} (-8pt,11pt) node[dot] {};
      \draw[](-8pt,-9pt) -- (-8pt,11pt);
      \draw[](8pt,-9pt) -- (8pt,1pt);
      \draw[](8pt,1pt) -- (-8pt,11pt);
      \draw[](-8pt,11pt) -- (-8pt,21pt);
      \draw[](8pt,1pt) -- (8pt,21pt);
    }\,
    $
    };
  \draw (box.south) ++(6pt,0) -- ++(0,-10pt) coordinate (bottom);
  \draw (box.south) ++(-6pt,0) -- ++(0,-10pt) coordinate (bottom);
  \draw (box.north) ++(6pt,0) -- ++(0pt,10pt);
  \draw (box.north) ++(-6pt,0) -- ++(0pt,10pt);
  \draw (box.north) ++(-0pt,18pt) node[draw,inner sep=2pt,fill=white] (upperbox) {\comult};
  \draw (upperbox.north) -- ++(0,10pt) ++(18pt,0) coordinate (top);
  \path (bottom) ++(0,40pt) coordinate (outerbasepoint);  
  \end{tikzpicture}\;
  \;+\;
  \;\begin{tikzpicture}[baseline=(outerbasepoint)]
  \node[draw,inner sep=2pt] (box) {
    $
    -  \,\tikz[baseline=(basepoint)]{ 
    \path (0,3pt) coordinate (basepoint) (8pt,1pt) node[dot] {} (0,11pt) node[dot] {};
    \draw[](-12pt,-9pt) -- (0,11pt);
    \draw[](0pt,-9pt) -- (8pt,1pt);
    \draw[](12pt,-9pt) -- (8pt,1pt);
    \draw[](8pt,1pt) -- (0,11pt);
    \draw[](0,11pt) -- (0,21pt);
  }\,
  -
  \,\tikz[baseline=(basepoint)]{ 
    \path (0,3pt) coordinate (basepoint) (-8pt,1pt) node[dot] {} (0,11pt) node[dot] {};
    \draw[](12pt,-9pt) -- (0,11pt);
    \draw[](-12pt,-9pt) -- (-8pt,1pt);
    \draw[](0pt,-9pt) -- (-8pt,1pt);
    \draw[](-8pt,1pt) -- (0,11pt);
    \draw[](0,11pt) -- (0,21pt);
  }\,
    $
    };
  \draw (box.north) ++(0pt,0) -- ++(0,10pt) coordinate(top);
  \draw (box.south) ++(0pt,0) -- ++(0,-10pt);
  \draw (box.south) ++(12pt,0) -- ++(0,-10pt);
  \draw (box.south) ++(+6pt,-18pt) node[draw,inner sep=2pt,fill=white] (lowerbox) {\mult};
  \draw (lowerbox.south) -- ++(0,-10pt) ++(-18pt,0pt) coordinate (bottom);
  \draw (box.south) ++(-12pt,0) -- (bottom);
  \path (bottom) ++(0,40pt) coordinate (outerbasepoint);  
  \end{tikzpicture}\;.
 \end{equation}
 This time both summands collect a sign from Remark~\ref{remark.signs2}, since the insides of both bottom boxes have degree $-1$ in $(\Frob_1)^\shriek$.
As a check-sum, the reader may want to verify that in both equations~(\ref{Dderivative}) and~(\ref{Aderivative}) we have $\partial^{2} = 0$.

Finally, we will need later to know the piece of $(\Frob_{1})^{\shriek}[-1]$ with $\#\mult + \#\comult = 4$ and genus $\beta = 2$.  Since $\#\mult = \beta + m-1 \geq \beta$ and $\#\comult = \beta + n-1 \geq \beta$, the only way to have a composition of $\mult$ and $\comult$ with $\bigl(\#\mult + \#\comult,\beta\bigr) = (4,2)$ is if $\bigl(\#\mult,\#\comult\bigr) = (2,2)$ and $(m,n) = (1,1)$.  Recalling that $
  \graphI=0$, the only graph with  $(m,n,\beta) = (1,1,2)$ is 
  \graphB.
    In particular, the (anti)symmetry of the $\mult$ and $\comult$ implies that this graph is equal up to sign to any of its permutations. 
    (Given these signs, it is worth checking that $\graphB \neq 0$ in $\Ff(T[1])$.  The $(\bS_{1}^{\op}\times\bS_{3})$-module $\Ff^{(2)}(\comult)$ splits as $\tikz \draw (0,0) rectangle (4pt,4pt);\otimes\bigl(\tikz \draw (0,0) rectangle (4pt,4pt) (4pt,0pt) rectangle (8pt,4pt) (0pt,-4pt)  rectangle  (4pt,0pt) ; \oplus \tikz \draw (0,0) rectangle (4pt,4pt) (4pt,0pt) rectangle (8pt,4pt) (8pt,0pt)  rectangle  (12pt,4pt) ;\bigr)$, whereas $\Ff^{(2)}(\mult) \cong \bigl(\tikz \draw (0,0) rectangle (4pt,4pt) (4pt,0pt) rectangle (8pt,4pt) (0pt,-4pt)  rectangle  (4pt,0pt) ; \oplus \tikz \draw (0,0) rectangle (4pt,4pt) (0pt,4pt) rectangle (4pt,8pt) (0pt,-4pt)  rectangle  (4pt,0pt) ;\bigr)\otimes \tikz \draw (0,0) rectangle (4pt,4pt);$.  The $(\bS_{1}^{\op}\times \bS_{1})$-module $\Ff^{(2)}(\mult) \otimes_{\bS_{3}} \Ff^{2}(\comult)$ is thus $1$-dimensional, and $\graphB$ is its generator.)
    
   Since $\graphB$ spans a homogeneous piece of $\Ff(T[1])$, it either is or is not in $(\Frob_{1})^{\shriek}$, depending on whether its derivative in $\B(\Ff(T[1]))$ is in $\sh\Frob_{1} = \B((\Frob_{1})^{\shriek})$.  In fact:
  \begin{equation} \label{Bderivative}
  \partial\left(
  \graphBbox  
  \right) = 
  \,\tikz[baseline=(outerbasepoint)]{
    \node[draw,rectangle,inner sep=2pt] (main) {\graphD};
    \draw (main.south) -- ++(0,-8pt) coordinate(bottom);
    \draw (main.north) ++(-6pt,0) -- ++(0,10pt);
    \draw (main.north) ++(6pt,0) -- ++(0,10pt);
    \draw (main.north) ++(0,18pt) node[draw,inner sep=2pt,fill=white] (upperbox) {\comult};
    \draw (upperbox.north) -- ++(0,10pt) coordinate (top);
    \path (bottom) -- coordinate (outerbasepoint) (top);
  }\,
  \;-\;
  \,\tikz[baseline=(outerbasepoint)]{
    \node[draw,rectangle,inner sep=2pt] (main) {\graphA};
    \draw (main.north) -- ++(0,8pt) coordinate (top);
    \draw (main.south) ++(-6pt,0) -- ++(0,-10pt);
    \draw (main.south) ++(6pt,0) -- ++(0,-10pt);
    \draw (main.south) ++(0,-18pt) node[draw,inner sep=2pt,fill=white] (lowerbox) {\mult};
    \draw (lowerbox.south) -- ++(0,-10pt) coordinate (bottom);
    \path (bottom) -- coordinate (outerbasepoint) (top);
  }\,
  \;+\; \frac13\;\,
  \begin{tikzpicture}[baseline=(outerbasepoint)]
    \path coordinate (outerbasepoint) ++(0,0)
      +(0,-25pt) node[draw,inner sep = 2pt] (lowerbox) {
        $
        \,\tikz[baseline=(basepoint)]{ 
          \path (0,3pt) coordinate (basepoint) (0,1pt) node[dot] {} (-8pt,11pt) node[dot] {};
          \draw[](0,-9pt) -- (0,1pt);
          \draw[](0,1pt) -- (-8pt,11pt);
          \draw[](-8pt,11pt) -- (-12pt,21pt);
          \draw[](-8pt,11pt) -- (0pt,21pt);
          \draw[](0,1pt) -- (12pt,21pt);
        }\,
        -
        \,\tikz[baseline=(basepoint)]{ 
          \path (0,3pt) coordinate (basepoint) (0,1pt) node[dot] {} (8pt,11pt) node[dot] {};
          \draw[](0,-9pt) -- (0,1pt);
          \draw[](0,1pt) -- (8pt,11pt);
          \draw[](0,1pt) -- (-12pt,21pt);
          \draw[](8pt,11pt) -- (0pt,21pt);
          \draw[](8pt,11pt) -- (12pt,21pt);
        }\,
        $}
      +(0,25pt) node[draw,inner sep = 2pt] (upperbox) {
        $
    -  \,\tikz[baseline=(basepoint)]{ 
    \path (0,3pt) coordinate (basepoint) (8pt,1pt) node[dot] {} (0,11pt) node[dot] {};
    \draw[](-12pt,-9pt) -- (0,11pt);
    \draw[](0pt,-9pt) -- (8pt,1pt);
    \draw[](12pt,-9pt) -- (8pt,1pt);
    \draw[](8pt,1pt) -- (0,11pt);
    \draw[](0,11pt) -- (0,21pt);
  }\,
  -
  \,\tikz[baseline=(basepoint)]{ 
    \path (0,3pt) coordinate (basepoint) (-8pt,1pt) node[dot] {} (0,11pt) node[dot] {};
    \draw[](12pt,-9pt) -- (0,11pt);
    \draw[](-12pt,-9pt) -- (-8pt,1pt);
    \draw[](0pt,-9pt) -- (-8pt,1pt);
    \draw[](-8pt,1pt) -- (0,11pt);
    \draw[](0,11pt) -- (0,21pt);
  }\,
    $
      };
    \draw (lowerbox.south) -- ++(0,-10pt);
    \draw (upperbox.north) -- ++(0,10pt);
    \path (lowerbox.north) +(12pt,0) coordinate (la) +(-12pt,0) coordinate (lb) (upperbox.south)  +(12pt,0) coordinate (ua) +(-12pt,0) coordinate (ub);
    \draw (lowerbox.north) -- (ub);
    \draw (la) -- (upperbox.south); \draw (lb) -- (ua);
  \end{tikzpicture}
  \;\in\; \sh\Frob_{1}.
  \end{equation}
  The sign on the second summand occurs because $\mult \in (\Frob_{1})^{\shriek}$ has cohomological degree $-1$.
Lemma~\ref{lemma.cobarinductive} implies therefore that in weight $\#\mult+ \#\comult = 4$, $(\Frob_{1})^{\shriek}[-1]$ looks like:
$$ (\beta = 0 \text{ part}) \;\oplus\; (\beta = 1 \text{ part}) \;\oplus \;
\underset{\tikz \draw (0,0) rectangle (4pt,4pt);\,\otimes\,\tikz \draw (0,0) rectangle (4pt,4pt);}{
  \graphBbox  
}$$

\section{Some obstruction theory and a proof of Theorem \texorpdfstring{\ref{mainthm.dioperads}}{1}} \label{section.mainthm1}

The goal of this  section is to prove Theorem~\ref{mainthm.dioperads}.  In the proof, and also in the proof of Theorem~\ref{mainthm.properads}, 
will use the following reasonably well known \define{basic facts of obstruction theory}.  Let $P$ be a quasifree (di/pr)operad with a well-ordered set of generators $f$ (each of which is  homogeneous  of homological degree $\deg(f)$), and such that for each generator $f$ of $P$, $\partial(f)$ is a composition of generators of $P$ that are strictly earlier than $f$ in the well-ordering.  For each generator $f$, let $P_{< f}$ denote the sub(di/pr)operad of $P$ generated by the generators that are strictly earlier than $f$, and $P_{\leq f}$ the sub(di/pr)operad generated $P_{<f}$ and $f$; the condition then guarantees that $P_{<f}$ and $P_{\leq f}$ are dg (di/pr)operads and that $\partial(f) \in P_{<f}$.  The basic facts that we will use provide ways to study, for any (di/pr)operad $Q$, the space of homomorphisms $\eta : P \to Q$.

\begin{enumerate}
  \item Homomorphisms $\eta: P \to Q$ may be built and studied inductively.  Suppose we have defined a homomorphism $\eta_{<f} : P_{<f} \to Q$, which we want to extend to $P_{\leq f}$.  Then $\eta(\partial f)\in Q$ is closed of cohomological degree $\deg(f)+1$.  The \define{obstruction to defining $f$} is the class of $\eta(\partial f)$ in $\H^{{\bullet}}(Q)$.  The first basic fact of obstruction theory is that $\eta(f)$ can be defined, and therefore the induction can be continued, if and only if the obstruction $[\eta(\partial f)]\in \H^{\bullet}(Q)$ vanishes.  In particular, maps $P \to Q$ are easy to construct whenever the cohomology groups of $Q$ vanish in degrees $\deg(f)+1$ for generators $f$ of $P$.
  \item There are, of course, many choices for $\eta(f)$ --- as many as there are closed elements of $Q$ (with the appropriate number of inputs and outputs and transforming appropriately under the $\bS$-actions) of degree $\deg (f)$.  Different choices might lead to later steps of the induction succeeding or failing.  The second basic fact says that whether a later step succeeds is not affected by changing $\eta(f)$ by something exact, so that the ``true'' space of choices for $\eta(f)$ (provided the obstruction is exact) is a torsor for $\H^{{\deg(f)}}(Q)$.
  
  To prove this, note first that two different extensions of $\eta_{<f} : P_{<f}\to Q$ whose values on $f$ differ by something exact can be connected by a ``linear'' path $P_{\geq f} \to Q\otimes \bR[\Delta^{1}] = Q\otimes\bR[t]$ (and conversely one may check that the endpoints of any path $P_{\leq f} \to Q\otimes \bR[\Delta^{1}]$ which is constant on $P_{<f}$ assigns to $f$ values that differ by something exact).  Suppose then by induction that we have a ``curve'' $\eta_{<f'} : P_{< f'} \to Q\otimes \bR[\Delta^{1}]$, and that we choose an extension of its left endpoint to $\eta_{\leq f'}|_{t=0} : P_{\leq f'} \to Q$.  It suffices to define $\eta_{\leq f'} : P_{\leq f'} \to Q\otimes \bR[\Delta^{1}]$ extending $\eta_{<f'}$ with the given boundary condition.  
  To find such an extension $\eta_{\leq f'}$, one may  use~\cite[Proposition~37]{MR2572248}, which implies that the inclusion $P_{<f'}\mono P_{\leq f'}$ is a cofibration; the projection $Q\otimes \bR[\Delta^{1}] \to Q$ that sets $t = 0$ is a surjective quasiisomorphism, i.e.\ an acyclic fibration; the extension then exists by the left lifting property.
  
  \item More generally, a similar analysis shows that $\pi_{j}\{$space of choices for $\eta_{\leq f}$ extending $\eta_{<f}\} = \H^{{\deg(f)-j}}(Q)$, provided $\pi_{i} = 0$ for $0<i<j$ and $\{$space of choices$\}$ is nonempty and connected.  
   In particular, we may immediately conclude that the space of choices for $\eta(f)$ is contractible whenever $\H^{{\deg(f) - j}}(Q) = 0$ for all $j \geq -1$.
\end{enumerate}

We will henceforth denote by $\sh^{\di}\Frob_{1}$  the cofibrant replacement \emph{as a dioperad} of $\Frob_{1}$ coming from Corollary~\ref{cor.shFrob}, and  $\sh^{\pr}\Frob_{1}$ will denote the cofibrant replacement \define{as a properad}.  There is a canonical inclusion $\sh^{\di}\Frob_{1} \mono \sh^{\pr}\Frob_{1}$, given by including the generators of $\sh^{\di}\Frob_{1}$ as the genus $\beta = 0$ generators of $\sh^{\pr}\Frob_{1}$.  
We will often write the wedge product of de Rham forms; we will omit the ``$\wedge$'' sign, writing ``$\alpha \beta$'' for $\alpha\wedge \beta$.

\begin{proof}[Proof of Theorem~\ref{mainthm.dioperads}]
  In the language of Section~\ref{section.diproperads}, Theorem~\ref{mainthm.dioperads} asserts that the space of homomorphisms $\sh^{\di}\Frob_{1} \to \qloc$ inducing the standard $1$-shifted Frobenius algebra structure $\Frob_{1} \to \End(\H^{\bullet}(S^{1}))$ is contractible.  According to the basic facts of obstruction theory discussed at the beginning of this section, to study the space of such homomorphisms it is enough to look at generators of $\sh^{\di}\Frob_{1}$ with certain cohomological degrees depending on the cohomology of $\qloc$.  In Proposition~\ref{prop.Hqloc} we computed that $\H^{\bullet}\qloc(m,n)$ has homology only in degrees $n-1$ and $n$.  On the other hand, in Corollary~\ref{cor.shFrob} we saw that a generator of $\sh^{\di}\Frob_{1}$ with $m$ inputs has cohomological degree $2-m$.  
  
  Thus the only generators for which there might be an obstruction are those for which $2-m +1 = n-1$ or $n$, i.e.\ $m+n = 3$ or $4$.  The only way there could be inequivalent choices are when $m+n = 2$ or $3$, and contributions to the $j$th homotopy group occur when $m+n = 2-j$ or $3-j$.  But $m+n\geq 3$, so we already conclude that the space of homomorphisms has no higher homotopy groups.
  
  We have therefore reduced the problem to analyzing those generators of $\sh^{\di}\Frob_{1}$ with $m+n = 3$ or $4$.  The only generators with $m+n = 3$ are the multiplication and comultiplication
  $$\,\tikz[baseline=(main.base)]{
    \node[draw,rectangle,inner sep=2pt] (main) {\mult};
    \draw (main.south) -- ++(0,-8pt);
    \draw (main.north) ++(-6pt,0) -- ++(0,8pt);
    \draw (main.north) ++(6pt,0) -- ++(0,8pt);
  }\, \quad \text{and} \quad 
  \,\tikz[baseline=(main.base)]{
    \node[draw,rectangle,inner sep=2pt] (main) {\comult};
    \draw (main.north) -- ++(0,8pt);
    \draw (main.south) ++(-6pt,0) -- ++(0,-8pt);
    \draw (main.south) ++(6pt,0) -- ++(0,-8pt);
  }\,,
  $$
  and the only generators with $m+n = 4$ are the homotopies imposing associativity, coassociativity, and the Frobenius axiom
  $$
  \,\tikz[baseline=(main.base)]{
    \node[draw,rectangle,inner sep=2pt] (main) {$
        \,\tikz[baseline=(basepoint)]{ 
    \path (0,3pt) coordinate (basepoint) (0,1pt) node[dot] {} (-8pt,11pt) node[dot] {};
    \draw[](0,-9pt) -- (0,1pt);
    \draw[](0,1pt) -- (-8pt,11pt);
    \draw[](-8pt,11pt) -- (-12pt,21pt);
    \draw[](-8pt,11pt) -- (0pt,21pt);
    \draw[](0,1pt) -- (12pt,21pt);
  }\,
  -
  \,\tikz[baseline=(basepoint)]{ 
    \path (0,3pt) coordinate (basepoint) (0,1pt) node[dot] {} (8pt,11pt) node[dot] {};
    \draw[](0,-9pt) -- (0,1pt);
    \draw[](0,1pt) -- (8pt,11pt);
    \draw[](0,1pt) -- (-12pt,21pt);
    \draw[](8pt,11pt) -- (0pt,21pt);
    \draw[](8pt,11pt) -- (12pt,21pt);
  }\,
    $};
    \draw (main.south) -- ++(0,-8pt);
    \draw (main.north) ++(-12pt,0) -- ++(0,8pt);
    \draw (main.north) ++(0pt,0) -- ++(0,8pt);
    \draw (main.north) ++(12pt,0) -- ++(0,8pt);
  }\,
  ,\quad
  \,\tikz[baseline=(main.base)]{
    \node[draw,rectangle,inner sep=2pt] (main) {$
    -  \,\tikz[baseline=(basepoint)]{ 
    \path (0,3pt) coordinate (basepoint) (8pt,1pt) node[dot] {} (0,11pt) node[dot] {};
    \draw[](-12pt,-9pt) -- (0,11pt);
    \draw[](0pt,-9pt) -- (8pt,1pt);
    \draw[](12pt,-9pt) -- (8pt,1pt);
    \draw[](8pt,1pt) -- (0,11pt);
    \draw[](0,11pt) -- (0,21pt);
  }\,
  -
  \,\tikz[baseline=(basepoint)]{ 
    \path (0,3pt) coordinate (basepoint) (-8pt,1pt) node[dot] {} (0,11pt) node[dot] {};
    \draw[](12pt,-9pt) -- (0,11pt);
    \draw[](-12pt,-9pt) -- (-8pt,1pt);
    \draw[](0pt,-9pt) -- (-8pt,1pt);
    \draw[](-8pt,1pt) -- (0,11pt);
    \draw[](0,11pt) -- (0,21pt);
  }\,
    $};
    \draw (main.north) -- ++(0,8pt);
    \draw (main.south) ++(-12pt,0) -- ++(0,-8pt);
    \draw (main.south) ++(0pt,0) -- ++(0,-8pt);
    \draw (main.south) ++(12pt,0) -- ++(0,-8pt);
  }\,
  ,\quad
  \text{and}\quad
  \,\tikz[baseline=(main.base)]{
    \node[draw,rectangle,inner sep=2pt] (main) {$
          \,\tikz[baseline=(basepoint)]{ 
    \path (0,3pt) coordinate (basepoint) (0,1pt) node[dot] {} (0,11pt) node[dot] {};
    \draw[](-8pt,-9pt) -- (0,1pt);
    \draw[](8pt,-9pt) -- (0,1pt);
    \draw[](0,1pt) -- (0,11pt);
    \draw[](0,11pt) -- (-8pt,21pt);
    \draw[](0,11pt) -- (8pt,21pt);
  }\,
  +
  \,\tikz[baseline=(basepoint)]{ 
    \path (0,3pt) coordinate (basepoint) (8pt,1pt) node[dot] {} (-8pt,11pt) node[dot] {};
    \draw[](-8pt,-9pt) -- (-8pt,11pt);
    \draw[](8pt,-9pt) -- (8pt,1pt);
    \draw[](8pt,1pt) -- (-8pt,11pt);
    \draw[](-8pt,11pt) -- (-8pt,21pt);
    \draw[](8pt,1pt) -- (8pt,21pt);
  }\,
    $};
    \draw (main.south) ++(-6pt,0) -- ++(0pt,-8pt);
    \draw (main.south) ++(6pt,0) -- ++(0pt,-8pt);
    \draw (main.north) ++(-6pt,0) -- ++(0,8pt);
    \draw (main.north) ++(6pt,0) -- ++(0,8pt);
  }\,
  .
  $$

  Since $\mult$ and $\comult$ act on $\H^{\bullet}(S^{1})$ with the given relations, these five generators can certainly be lifted to $\End(\Omega^{\bullet}(S^{1}))$.  The  question is to see that they can be lifted quasilocally.
  The multiplication $\mult$, of course, can be lifted to wedge multiplication, which is a local operation in $\qloc(2,1)^{0}$ and for which
  $$ \partial \left(   \,\tikz[baseline=(main.base)]{
    \node[draw,rectangle,inner sep=2pt] (main) {$
        \,\tikz[baseline=(basepoint)]{ 
    \path (0,3pt) coordinate (basepoint) (0,1pt) node[dot] {} (-8pt,11pt) node[dot] {};
    \draw[](0,-9pt) -- (0,1pt);
    \draw[](0,1pt) -- (-8pt,11pt);
    \draw[](-8pt,11pt) -- (-12pt,21pt);
    \draw[](-8pt,11pt) -- (0pt,21pt);
    \draw[](0,1pt) -- (12pt,21pt);
  }\,
  -
  \,\tikz[baseline=(basepoint)]{ 
    \path (0,3pt) coordinate (basepoint) (0,1pt) node[dot] {} (8pt,11pt) node[dot] {};
    \draw[](0,-9pt) -- (0,1pt);
    \draw[](0,1pt) -- (8pt,11pt);
    \draw[](0,1pt) -- (-12pt,21pt);
    \draw[](8pt,11pt) -- (0pt,21pt);
    \draw[](8pt,11pt) -- (12pt,21pt);
  }\,
    $};
    \draw (main.south) -- ++(0,-8pt);
    \draw (main.north) ++(-12pt,0) -- ++(0,8pt);
    \draw (main.north) ++(0pt,0) -- ++(0,8pt);
    \draw (main.north) ++(12pt,0) -- ++(0,8pt);
  }\,
 \right) = 0 \text{ in } \qloc(3,1), $$
  since wedge multiplication of smooth forms is strictly associative.
  
  By Proposition~\ref{prop.Hqloc}, $\H^{0}(\qloc(2,1))$ is one-dimensional; since wedge multiplication represents a non-zero class in $\End(\H^{\bullet}(S^{1}))$, the map $\H^{\bullet}\qloc \to \End(\H^{\bullet}(S^{1}))$ is non-zero in cohomological degree~$0$, and hence an inclusion.  In particular, any two lifts of $\mult$ to $\qloc$ represent the same class in $\H^{0}(\qloc)$.  Said another way, the space of choices for $\,\tikz[baseline=(main.base)]{
    \node[draw,rectangle,inner sep=2pt] (main) {\mult};
    \draw (main.south) -- ++(0,-8pt);
    \draw (main.north) ++(-6pt,0) -- ++(0,8pt);
    \draw (main.north) ++(6pt,0) -- ++(0,8pt);
  }\,$ is connected, and hence contractible since we saw already that there is no higher homotopy.  The same argument shows that the space of choices for $\,\tikz[baseline=(main.base)]{
    \node[draw,rectangle,inner sep=2pt] (main) {\comult};
    \draw (main.north) -- ++(0,8pt);
    \draw (main.south) ++(-6pt,0) -- ++(0,-8pt);
    \draw (main.south) ++(6pt,0) -- ++(0,-8pt);
  }\,$ is contractible if nonempty.
  
  Wedge multiplication is local but has distributional integral kernel.  We could have made a different choice for $\,\tikz[baseline=(main.base)]{
    \node[draw,rectangle,inner sep=2pt] (main) {\mult};
    \draw (main.south) -- ++(0,-8pt);
    \draw (main.north) ++(-6pt,0) -- ++(0,8pt);
    \draw (main.north) ++(6pt,0) -- ++(0,8pt);
  }\,$: we could have chosen it to have integral kernel that is smooth.  By the second basic fact of obstruction theory, making such a choice won't spoil the exactness 
  \begin{equation} \label{eqn.assoc}
   \partial \left(   \,\tikz[baseline=(main.base)]{
    \node[draw,rectangle,inner sep=2pt] (main) {$
        \,\tikz[baseline=(basepoint)]{ 
    \path (0,3pt) coordinate (basepoint) (0,1pt) node[dot] {} (-8pt,11pt) node[dot] {};
    \draw[](0,-9pt) -- (0,1pt);
    \draw[](0,1pt) -- (-8pt,11pt);
    \draw[](-8pt,11pt) -- (-12pt,21pt);
    \draw[](-8pt,11pt) -- (0pt,21pt);
    \draw[](0,1pt) -- (12pt,21pt);
  }\,
  -
  \,\tikz[baseline=(basepoint)]{ 
    \path (0,3pt) coordinate (basepoint) (0,1pt) node[dot] {} (8pt,11pt) node[dot] {};
    \draw[](0,-9pt) -- (0,1pt);
    \draw[](0,1pt) -- (8pt,11pt);
    \draw[](0,1pt) -- (-12pt,21pt);
    \draw[](8pt,11pt) -- (0pt,21pt);
    \draw[](8pt,11pt) -- (12pt,21pt);
  }\,
    $};
    \draw (main.south) -- ++(0,-8pt);
    \draw (main.north) ++(-12pt,0) -- ++(0,8pt);
    \draw (main.north) ++(0pt,0) -- ++(0,8pt);
    \draw (main.north) ++(12pt,0) -- ++(0,8pt);
  }\,
 \right) 
 \want
   \;-\;
  \,\tikz[baseline=(somewhere)]{ 
    \path (0,10pt) coordinate (somewhere);
    \path (0pt,0pt) node[draw,inner sep=2pt,fill=white] (a) {\mult} (-10pt,25pt) node[draw,inner sep=2pt,fill=white] (b) {\mult};
    \draw[](b) -- (a);
    \draw[](a) -- (0,-20pt);
    \draw[](a) -- (20pt,45pt);
    \draw[](b) -- (-20pt,45pt);
    \draw[](b) -- (0pt,45pt);
  }\,
  \;+\;
  \,\tikz[baseline=(somewhere)]{ 
    \path (0,10pt) coordinate (somewhere);
    \path (0pt,0pt) node[draw,inner sep=2pt,fill=white] (a) {\mult} (10pt,25pt) node[draw,inner sep=2pt,fill=white] (b) {\mult};
    \draw[](b) -- (a);
    \draw[](a) -- (0,-20pt);
    \draw[](a) -- (-20pt,45pt);
    \draw[](b) -- (20pt,45pt);
    \draw[](b) -- (0pt,45pt);
  }\,
 = 0 \text{ in } \H^{\bullet}\qloc(3,1), \end{equation}
 although it may no longer vanish identically at the chain level.
 
 Let us choose therefore a smooth kernel $\psi_{\epsilon}(x,y,z) \in \Omega^{2}\bigl(\bR_{>0}\times (S^{1})^{\times 3}\bigr)$, where $$\,\tikz[baseline=(main.base)]{
    \node[draw,rectangle,inner sep=2pt] (main) {\mult};
    \draw (main.south) -- ++(0,-8pt);
    \draw (main.north) ++(-6pt,0) -- ++(0,8pt);
    \draw (main.north) ++(6pt,0) -- ++(0,8pt);
  }\,: \alpha \mapsto \int_{x,y\in S^{1}} \psi_{\epsilon}(x,y,-)\, \alpha(x,y), \quad\quad \alpha \in \Omega^{\bullet}(S^{1})\otimes \Omega^{\bullet}(S^{1}) = \Omega^{\bullet}(S^{1}\times S^{1})$$ is a closed quasilocal operation with the same homology class as wedge multiplication.  By averaging, we can assume $\psi_{\epsilon}$ is invariant under permutations of all three $S^{1}$-variables.  We can choose to lift the comultiplication to
  $$ \,\tikz[baseline=(main.base)]{
    \node[draw,rectangle,inner sep=2pt] (main) {\comult};
    \draw (main.north) -- ++(0,8pt);
    \draw (main.south) ++(-6pt,0) -- ++(0,-8pt);
    \draw (main.south) ++(6pt,0) -- ++(0,-8pt);
  }\, : \alpha \mapsto \int_{z\in S^{1}} \psi_{\epsilon}(-,-,z)\, \alpha(z). $$
  With this choice, the vanishing of the obstructions corresponding to coassociativity and Frobenius  axioms
  $$ \partial \left(  \,\tikz[baseline=(main.base)]{
    \node[draw,rectangle,inner sep=2pt] (main) {$
     - \,\tikz[baseline=(basepoint)]{ 
    \path (0,3pt) coordinate (basepoint) (8pt,1pt) node[dot] {} (0,11pt) node[dot] {};
    \draw[](-12pt,-9pt) -- (0,11pt);
    \draw[](0pt,-9pt) -- (8pt,1pt);
    \draw[](12pt,-9pt) -- (8pt,1pt);
    \draw[](8pt,1pt) -- (0,11pt);
    \draw[](0,11pt) -- (0,21pt);
  }\,
  -
  \,\tikz[baseline=(basepoint)]{ 
    \path (0,3pt) coordinate (basepoint) (-8pt,1pt) node[dot] {} (0,11pt) node[dot] {};
    \draw[](12pt,-9pt) -- (0,11pt);
    \draw[](-12pt,-9pt) -- (-8pt,1pt);
    \draw[](0pt,-9pt) -- (-8pt,1pt);
    \draw[](-8pt,1pt) -- (0,11pt);
    \draw[](0,11pt) -- (0,21pt);
  }\,
    $};
    \draw (main.north) -- ++(0,8pt);
    \draw (main.south) ++(-12pt,0) -- ++(0,-8pt);
    \draw (main.south) ++(0pt,0) -- ++(0,-8pt);
    \draw (main.south) ++(12pt,0) -- ++(0,-8pt);
  }\,
  \right) = 0 \text{ in }\H^{\bullet}\qloc(1,3)
  \quad
  \text{and}\quad
  \partial\left(
  \,\tikz[baseline=(main.base)]{
    \node[draw,rectangle,inner sep=2pt] (main) {$
          \,\tikz[baseline=(basepoint)]{ 
    \path (0,3pt) coordinate (basepoint) (0,1pt) node[dot] {} (0,11pt) node[dot] {};
    \draw[](-8pt,-9pt) -- (0,1pt);
    \draw[](8pt,-9pt) -- (0,1pt);
    \draw[](0,1pt) -- (0,11pt);
    \draw[](0,11pt) -- (-8pt,21pt);
    \draw[](0,11pt) -- (8pt,21pt);
  }\,
  +
  \,\tikz[baseline=(basepoint)]{ 
    \path (0,3pt) coordinate (basepoint) (8pt,1pt) node[dot] {} (-8pt,11pt) node[dot] {};
    \draw[](-8pt,-9pt) -- (-8pt,11pt);
    \draw[](8pt,-9pt) -- (8pt,1pt);
    \draw[](8pt,1pt) -- (-8pt,11pt);
    \draw[](-8pt,11pt) -- (-8pt,21pt);
    \draw[](8pt,1pt) -- (8pt,21pt);
  }\,
    $};
    \draw (main.south) ++(-6pt,0) -- ++(0pt,-8pt);
    \draw (main.south) ++(6pt,0) -- ++(0pt,-8pt);
    \draw (main.north) ++(-6pt,0) -- ++(0,8pt);
    \draw (main.north) ++(6pt,0) -- ++(0,8pt);
  }\,
  \right) = 0 \text{ in }\H^{\bullet}\qloc(2,2)
  $$
  follow from equation~(\ref{eqn.assoc}), by interpreting all three as asserting the quasilocal exactness of forms on $(S^1)^{\times 4}$ which are related by the $\bS_4$ action.
\end{proof}

The proof of Theorem~\ref{mainthm.properads} (in particular, equations \ref{eqn.answer for frobeniator}, \ref{diagrammaticcoassocdefn}, and \ref{half of the coassociator}) will contain more explicit formulas also verifying that these obstructions vanish.

\section{Some calculus and a proof of Theorem \texorpdfstring{\ref{mainthm.properads}}{2}} \label{section.mainthm2}

We turn now to Theorem~\ref{mainthm.properads}, which asserts that the space of homomorphisms $\sh^{\pr}\Frob_{1}\to \qloc$ inducing the appropriate homomorphism to $\End(\H^{\bullet}(S^{1}))$ is empty.
Our argument involves a fair amount of explicit calculation, and so we  record first a few conventions.  As in Section~\ref{section.mainthm1}, we write the wedge product of forms simply by concatenation.  The total de Rham differential is~$\d$.  We will write essentially all operations in terms of their integral kernels, and identify $\Omega^{\bullet}\bigl(S^{1}\bigr)^{\otimes m}$ with $\Omega^{\bullet}\bigl((S^{1})^{\times m}\bigr)$.  Thus a typical operation $\Psi \in \Omega^{\bullet}\bigl((S^{1})^{\times 2}\bigr) \to \Omega^{\bullet}\bigl((S^{1})^{\times 2}\bigr)$ with kernel $\psi$ might be
$$ \Psi : \alpha \mapsto \left( (x,y) \mapsto \int_{v\in S^{1}}\int_{w\in S^{1}} \psi(x,y,v,w)\,\alpha(v,w)\right). $$
It will be convenient not to write all the ``$\mapsto$''s.  We can safely call by ``$\alpha$'' the input of any operation, but we will need to track the coordinates at which the output is evaluated.  
  In diagrams, we will write the above formula as
$$ \tikz[baseline=(Gamma.base)] \draw node[draw,circle,inner sep=1pt] (Gamma) {$\psi$} (Gamma) -- ++(-.25,.5)  (Gamma) -- ++(.25,.5) (Gamma) -- ++(-.25,-.5) node[anchor=north] {$\scriptstyle x$}  (Gamma) -- ++(.25,-.5) node[anchor=north] {$\scriptstyle y$}; = \int_{x\in S^{1}}\int_{y\in S^{1}} \psi(x,y,v,w)\,\alpha(v,w). $$

The one operation that we will use without writing its integral kernel is wedge multiplication.  As a map $\Omega^{\bullet}\bigl((S^{1})^{\times 2}\bigr) \to \Omega^{\bullet}(S^{1})$, it just pulls back forms along the diagonal:
$$ \tikz[baseline=(Gamma.base)] \draw node[draw,circle,inner sep=1pt] (Gamma) {$\wedge$} (Gamma) -- ++(-.25,.5)  (Gamma) -- ++(.25,.5) (Gamma) -- ++(0,-.5) node[anchor=north] {$\scriptstyle x$} ; = \alpha(x,x) = \tikz[baseline=(Gamma.base)] \draw node[text=white] (Gamma) {$\wedge$} (Gamma)  ++(-.25,.5) -- ++(0,-.5) node[anchor=north] {$\scriptstyle x$}  (Gamma)  ++(.25,.5) -- ++(0,-.5) node[anchor=north] {$\scriptstyle x$}  ;. $$

If $\psi \in \Omega^{\bullet}\bigl((S^{1})^{\times 4}\bigr)$ has degree $d$, then the above operation $\tikz[baseline=(Gamma.base)] \draw node[draw,circle,inner sep=.5pt] (Gamma) {$\psi$} (Gamma) -- ++(-.25,.25)  (Gamma) -- ++(.25,.25) (Gamma) -- ++(-.25,-.25) (Gamma) -- ++(.25,-.25);$ has degree $d-2$.  We understand this degree shift by assigning the symbol ``$\int_{x}$'' degree $-1$.  This affects (indeed, effects) signs in formulas after enforcing Remark~\ref{remark.signs}:
$$ \int_{x}\int_{y} = -\int_{y}\int_{x}; \quad\quad \d \int = - \int \d; \quad\quad \int_{x} \varphi(y,\dots) = (-1)^{\deg(\varphi)}\varphi(y,\dots)\int_{x} $$
if $\varphi$ is homogeneous  and independent of $x$.

Fix once and for all an identification $S^{1} = \bR / \bZ$, so that all coordinates $x,y,\dots$ on $S^{1}$ are periodic with period $1$.  Given a compactly-supported form $\varphi \in \Omega^{\bullet}_{\mathrm{cpt}}\bigl((-\frac12,\frac12)\bigr)$, we will abuse notation and write $\varphi(x),\,x\in S^{1}$ for the pushforward of $\varphi$ along the inclusion $(-\frac12,\frac12) \to S^{1}$.  We always denote by $\epsilon \in \bR_{>0}$ the variable controlling quasilocality, and generally write a form $\varphi \in \Omega^{\bullet}\bigl(\bR_{>0}\times S^{1}\bigr)$ as~$\varphi_{\epsilon}(x)$.
We also denote by $\varphi_{\epsilon}(x), \, \epsilon \in \bR_{>0},\, x\in S^{1}$ the pushforward of a form  $\varphi \in \Omega^{\bullet}\bigl(\bR_{>0} \times (-\frac12,\frac12)\bigr)$ with compact support in the $(-\frac12,\frac12)$-direction.

We always let $\delta$ refer to the $\delta$-distribution \emph{thought of as a one-form} (so our $\delta(x)$ is what many would write as $\delta(x)\,\d x$).  With this convention, $\delta(ax) = \delta(x)$ for $a \in \bR_{>0}$, and $\delta(-x) = -\delta(x)$.  Signs are normalized by:
$$ \int_{x}\delta(x)\,\alpha(x,y,\dots) = \alpha(0,y,\dots). $$
Note that since $\delta$ is a one-form, if $\varphi$ is homogeneous then $\delta(x)\,\varphi(x,\dots) = (-1)^{\deg \varphi}\varphi(x,\dots)\,\delta(x)$.

The antiderivative of $\delta$ is the non-compactly-supported discontinuous function
$$ \Theta(x) = \begin{cases} 1, & x > 0, \\
0, & x < 0, \end{cases} $$
which when convolved against a smooth form gives:
$$ \int_{x}\Theta(x) \, \alpha(x,y,\dots) = \int_{x=0}^{\infty} \alpha(x,y,\dots). $$
Although the exterior product $\Theta(x)\,\delta(y)$ makes sense as a distributional form in two variables, the product $\Theta(x)\,\delta(x)$ is undefined.  As such, we will need to be quite careful when working with these distributional forms.

Note finally that
\begin{multline*}\left[ \d, \alpha \mapsto \int \psi \alpha \right] = \d \int \psi \,\alpha - (-1)^{\deg(\int\psi)} \int \psi \,\d \alpha  \\ = (-1)^{\deg \int}\int \d (\psi \, \alpha) -  (-1)^{\deg(\int\psi)} \int \psi \, \d \alpha = (-1)^{\deg \int}\int \d \psi\, \alpha, \end{multline*}
and so we can always ignore the $\alpha$ input when calculating derivatives.

\begin{proof}[Proof of Theorem~\ref{mainthm.properads}]
  Our strategy will be to find a non-exact obstruction.  We begin by analyzing the possible choices, by comparing Proposition~\ref{prop.Hqloc} with Corollary~\ref{cor.shFrob}: $\qloc(m,n)$ has cohomology only in degrees $n-1$ and $n$; a generator of $\sh^{\pr}\Frob_{1}$  with $m$ inputs and genus $\beta$ is in cohomological degree $2-(\beta+m)$.  Thus there can be obstructions only when $m+n+\beta = 3$ or $4$, and inequivalent choices only when $m+n+\beta = 2$ or $3$.  But $\beta \geq 0$ and $m+n \geq 2$.  At the end of Section~\ref{section.Koszul} we listed all generators of $\sh^{\pr}\Frob_{1}$ with $m+n+\beta \leq 4$.  There are none with $(m+n,\beta) = (2,0)$ or $(2,1)$ (the latter since $\graphI = 0$).  
  The only generator with $(m+n,\beta) = (2,2)$ is $\graphBbox$.\vspace{-12pt}  
  The only generators with $(m+n,\beta) = (3,0)$ are $\,\tikz[baseline=(main.base)]{
    \node[draw,rectangle,inner sep=2pt] (main) {\mult};
    \draw (main.south) -- ++(0,-8pt);
    \draw (main.north) ++(-6pt,0) -- ++(0,8pt);
    \draw (main.north) ++(6pt,0) -- ++(0,8pt);
  }\,$ and $\,\tikz[baseline=(main.base)]{
    \node[draw,rectangle,inner sep=2pt] (main) {\comult};
    \draw (main.north) -- ++(0,8pt);
    \draw (main.south) ++(-6pt,0) -- ++(0,-8pt);
    \draw (main.south) ++(6pt,0) -- ++(0,-8pt);
  }\,$.\vspace{-6pt}
  There are two generators with $(m+n,\beta) = (3,1)$, namely $ \graphDbox \text{ and }  \graphAbox$.  Finally, there are three generators with $(m+n,\beta) = (4,0)$:
    $$
  \,\tikz[baseline=(main.base)]{
    \node[draw,rectangle,inner sep=2pt] (main) {$
        \,\tikz[baseline=(basepoint)]{ 
    \path (0,3pt) coordinate (basepoint) (0,1pt) node[dot] {} (-8pt,11pt) node[dot] {};
    \draw[](0,-9pt) -- (0,1pt);
    \draw[](0,1pt) -- (-8pt,11pt);
    \draw[](-8pt,11pt) -- (-12pt,21pt);
    \draw[](-8pt,11pt) -- (0pt,21pt);
    \draw[](0,1pt) -- (12pt,21pt);
  }\,
  -
  \,\tikz[baseline=(basepoint)]{ 
    \path (0,3pt) coordinate (basepoint) (0,1pt) node[dot] {} (8pt,11pt) node[dot] {};
    \draw[](0,-9pt) -- (0,1pt);
    \draw[](0,1pt) -- (8pt,11pt);
    \draw[](0,1pt) -- (-12pt,21pt);
    \draw[](8pt,11pt) -- (0pt,21pt);
    \draw[](8pt,11pt) -- (12pt,21pt);
  }\,
    $};
    \draw (main.south) -- ++(0,-8pt);
    \draw (main.north) ++(-12pt,0) -- ++(0,8pt);
    \draw (main.north) ++(0pt,0) -- ++(0,8pt);
    \draw (main.north) ++(12pt,0) -- ++(0,8pt);
  }\,
  ,\quad
  \,\tikz[baseline=(main.base)]{
    \node[draw,rectangle,inner sep=2pt] (main) {$
          \,\tikz[baseline=(basepoint)]{ 
    \path (0,3pt) coordinate (basepoint) (0,1pt) node[dot] {} (0,11pt) node[dot] {};
    \draw[](-8pt,-9pt) -- (0,1pt);
    \draw[](8pt,-9pt) -- (0,1pt);
    \draw[](0,1pt) -- (0,11pt);
    \draw[](0,11pt) -- (-8pt,21pt);
    \draw[](0,11pt) -- (8pt,21pt);
  }\,
  +
  \,\tikz[baseline=(basepoint)]{ 
    \path (0,3pt) coordinate (basepoint) (8pt,1pt) node[dot] {} (-8pt,11pt) node[dot] {};
    \draw[](-8pt,-9pt) -- (-8pt,11pt);
    \draw[](8pt,-9pt) -- (8pt,1pt);
    \draw[](8pt,1pt) -- (-8pt,11pt);
    \draw[](-8pt,11pt) -- (-8pt,21pt);
    \draw[](8pt,1pt) -- (8pt,21pt);
  }\,
    $};
    \draw (main.south) ++(-6pt,0) -- ++(0pt,-8pt);
    \draw (main.south) ++(6pt,0) -- ++(0pt,-8pt);
    \draw (main.north) ++(-6pt,0) -- ++(0,8pt);
    \draw (main.north) ++(6pt,0) -- ++(0,8pt);
  }\,
  ,\quad
  \text{and}\quad
  \,\tikz[baseline=(main.base)]{
    \node[draw,rectangle,inner sep=2pt] (main) {$
   -   \,\tikz[baseline=(basepoint)]{ 
    \path (0,3pt) coordinate (basepoint) (8pt,1pt) node[dot] {} (0,11pt) node[dot] {};
    \draw[](-12pt,-9pt) -- (0,11pt);
    \draw[](0pt,-9pt) -- (8pt,1pt);
    \draw[](12pt,-9pt) -- (8pt,1pt);
    \draw[](8pt,1pt) -- (0,11pt);
    \draw[](0,11pt) -- (0,21pt);
  }\,
  -
  \,\tikz[baseline=(basepoint)]{ 
    \path (0,3pt) coordinate (basepoint) (-8pt,1pt) node[dot] {} (0,11pt) node[dot] {};
    \draw[](12pt,-9pt) -- (0,11pt);
    \draw[](-12pt,-9pt) -- (-8pt,1pt);
    \draw[](0pt,-9pt) -- (-8pt,1pt);
    \draw[](-8pt,1pt) -- (0,11pt);
    \draw[](0,11pt) -- (0,21pt);
  }\,
    $};
    \draw (main.north) -- ++(0,8pt);
    \draw (main.south) ++(-12pt,0) -- ++(0,-8pt);
    \draw (main.south) ++(0pt,0) -- ++(0,-8pt);
    \draw (main.south) ++(12pt,0) -- ++(0,-8pt);
  }\,
    .
  $$
    For all other generators, $m+n+\beta \geq 5$.  
    The basic facts of obstruction theory (along with the requirement that the homomorphism lift the $\Frob_{1}$ structure on $\H^{\bullet}(S^{1})$, as in Theorem~\ref{mainthm.dioperads}) assure that there are never homotopically-inequivalent choices available for any generator.  In particular, if an obstruction fails to vanish in cohomology for some set of choices, it will fail to vanish for any other choices as well. \vspace{-6pt}
  
  We might as well choose $\,\tikz[baseline=(main.base)]{
    \node[draw,rectangle,inner sep=2pt] (main) {\mult};
    \draw (main.south) -- ++(0,-8pt) ;
    \draw (main.north) ++(-6pt,0) -- ++(0,8pt);
    \draw (main.north) ++(6pt,0) -- ++(0,8pt);
  }\,$ to be the wedge multiplication of de Rham forms:
  $$\,\tikz[baseline=(main.base)]{
    \node[draw,rectangle,inner sep=2pt] (main) {\mult};
    \draw (main.south) -- ++(0,-8pt)  node[anchor=north] {$\scriptstyle x$};
    \draw (main.north) ++(-6pt,0) -- ++(0,8pt);
    \draw (main.north) ++(6pt,0) -- ++(0,8pt);
  }\, = \,\tikz[baseline=(main.base)]{
    \node[white,text=white,draw,rectangle,inner sep=2pt] (main) {\mult};
    \node[draw,circle,inner sep=1pt] (label) {$\wedge$};
    \path (main.south)  ++(0,-8pt) coordinate (a) (main.north) ++(-6pt,0)  ++(0,8pt)  coordinate (b) (main.north) ++(6pt,0) -- ++(0,8pt)  coordinate (c);
    \draw (label) -- (a)  node[anchor=north] {$\scriptstyle x$} (label) -- (b) (label) -- (c);
  }\, = \, \alpha(x,x).$$
    Since this is strictly associative, we can set
  \begin{equation} \label{eqn.assoc=0}
    \,\tikz[baseline=(main.base)]{
    \node[draw,rectangle,inner sep=2pt] (main) {$
        \,\tikz[baseline=(basepoint)]{ 
    \path (0,3pt) coordinate (basepoint) (0,1pt) node[dot] {} (-8pt,11pt) node[dot] {};
    \draw[](0,-9pt) -- (0,1pt);
    \draw[](0,1pt) -- (-8pt,11pt);
    \draw[](-8pt,11pt) -- (-12pt,21pt);
    \draw[](-8pt,11pt) -- (0pt,21pt);
    \draw[](0,1pt) -- (12pt,21pt);
  }\,
  -
  \,\tikz[baseline=(basepoint)]{ 
    \path (0,3pt) coordinate (basepoint) (0,1pt) node[dot] {} (8pt,11pt) node[dot] {};
    \draw[](0,-9pt) -- (0,1pt);
    \draw[](0,1pt) -- (8pt,11pt);
    \draw[](0,1pt) -- (-12pt,21pt);
    \draw[](8pt,11pt) -- (0pt,21pt);
    \draw[](8pt,11pt) -- (12pt,21pt);
  }\,
    $};
    \draw (main.south) -- ++(0,-8pt);
    \draw (main.north) ++(-12pt,0) -- ++(0,8pt);
    \draw (main.north) ++(0pt,0) -- ++(0,8pt);
    \draw (main.north) ++(12pt,0) -- ++(0,8pt);
  }\,
  = 0.\end{equation}

  We cannot make such a convenient choice for comultiplication $\,\tikz[baseline=(main.base)]{
    \node[draw,rectangle,inner sep=2pt] (main) {\comult};
    \draw (main.north) -- ++(0,8pt);
    \draw (main.south) ++(-6pt,0) -- ++(0,-8pt);
    \draw (main.south) ++(6pt,0) -- ++(0,-8pt);
  }\,$.  
  Let us instead choose a smooth 1-form $\phi \in \Omega^{1}_{\mathrm{cpt}}\bigl( (-\frac\pi4,\frac\pi4)\bigr)$ with total integral $\int \phi = 1$.  Then we can define:
  \begin{equation*}\label{eqn.Phi}
   \Phi_{\epsilon}(x) = \phi\bigl( x/\arctan(\epsilon)\bigr).
   \end{equation*}
  This is a smooth closed 1-form in $\Omega^{1}\bigl(\bR_{>0} \times (-\frac12,\frac12)\bigr) $ with compact support in the  $(-\frac12,\frac12)$-direction.  Indeed, it is supported within $x \in (-\epsilon,\epsilon)$; as $\epsilon \to 0$, $\Phi_{\epsilon}$ converges to the $\delta$-distribution.  We will later use the following primitive of $\Phi_\epsilon$:
  \begin{equation} \label{eqn.F}
    F_\epsilon(x) = \int_{y = -\infty}^x \Phi_\epsilon(y) = \int_y \Theta(x-y)\,\Phi_\epsilon(y).
  \end{equation}
  Then $\d F_\epsilon = \Phi_\epsilon$.  The function $F_\epsilon$ is not compactly-supported on $\bR$, and so does not descend to a function on $S^1$.  But the difference $F_\epsilon - \Theta$ is compactly supported --- indeed, it is supported in the interval $(-\epsilon,\epsilon)$ --- and it is a homotopy between $\Phi_\epsilon$ and $\delta$.
  
    With these choices, we hereby set:
  \begin{equation*}
    \,\tikz[baseline=(main.base)]{
    \node[draw,rectangle,inner sep=2pt] (main) {\comult};
    \draw (main.north) -- ++(0,8pt);
    \draw (main.south) ++(-6pt,0) -- ++(0,-8pt) node[anchor=north] {$\scriptstyle x$};
    \draw (main.south) ++(6pt,0) -- ++(0,-8pt) node[anchor=north] {$\scriptstyle y$};
  }\, = \int_{v} \Phi_{\epsilon}(x-v)\,\Phi_{\epsilon}(y-v)\,\alpha(v).
  \end{equation*}
  The corresponding integral kernel $(x,y,v) \mapsto \Phi_{\epsilon}(x-v)\,\Phi_{\epsilon}(y-v)$ is then non-zero only when $|x-v| < \epsilon$ and $|y-v| < \epsilon$, and is therefore quasilocal.  Since $\Phi_{\epsilon}(x-v)$ is a smooth approximation of  $\delta(x-v)$, the above choice does indeed represent comultiplication.
  
    Of the Frobenius and coassociative homotopies, the former is easier, and so we turn to it first.  According to equation~(\ref{eqn.frobeniator}), in order to represent the Frobenius homotopy, we need to find a quasilocal primitive for the following operation:
  \begin{align*}
    \left[\d,
    \,\tikz[baseline=(main.base)]{
    \node[draw,rectangle,inner sep=2pt] (main) {$
         \,\tikz[baseline=(basepoint)]{ 
    \path (0,3pt) coordinate (basepoint) (0,1pt) node[dot] {} (0,11pt) node[dot] {};
    \draw[](-8pt,-9pt) -- (0,1pt);
    \draw[](8pt,-9pt) -- (0,1pt);
    \draw[](0,1pt) -- (0,11pt);
    \draw[](0,11pt) -- (-8pt,21pt);
    \draw[](0,11pt) -- (8pt,21pt);
  }\,
  +
  \,\tikz[baseline=(basepoint)]{ 
    \path (0,3pt) coordinate (basepoint) (8pt,1pt) node[dot] {} (-8pt,11pt) node[dot] {};
    \draw[](-8pt,-9pt) -- (-8pt,11pt);
    \draw[](8pt,-9pt) -- (8pt,1pt);
    \draw[](8pt,1pt) -- (-8pt,11pt);
    \draw[](-8pt,11pt) -- (-8pt,21pt);
    \draw[](8pt,1pt) -- (8pt,21pt);
  }\,
    $};
    \draw (main.south) ++(-6pt,0) -- ++(0pt,-8pt) node[anchor=north] {$\scriptstyle x$};
    \draw (main.south) ++(6pt,0) -- ++(0pt,-8pt) node[anchor=north] {$\scriptstyle y$};
    \draw (main.north) ++(-6pt,0) -- ++(0,8pt);
    \draw (main.north) ++(6pt,0) -- ++(0,8pt);
  }\,
  \right] & \want  \int_{v} \Phi_{\epsilon}(x-v)\,\Phi_{\epsilon}(y-v)\, \alpha(v,v)  
  -\int_{v} \Phi_{\epsilon}(x-v)\,\Phi_{\epsilon}(y-v)\, \alpha(v,y) \\
   & \wantspacing \int_{v} \Phi_{\epsilon}(x-v)\,\Phi_{\epsilon}(y-v)\,
   \int_w \bigl(\delta(w-v) - \delta(w-y)\bigr)\, \alpha(v,w).
  \end{align*}
  
  There are multiple solutions to this equation.  One of them is
  \begin{equation}\label{eqn.answer for frobeniator}
    \,\tikz[baseline=(main.base)]{
    \node[draw,rectangle,inner sep=2pt] (main) {$
         \,\tikz[baseline=(basepoint)]{ 
    \path (0,3pt) coordinate (basepoint) (0,1pt) node[dot] {} (0,11pt) node[dot] {};
    \draw[](-8pt,-9pt) -- (0,1pt);
    \draw[](8pt,-9pt) -- (0,1pt);
    \draw[](0,1pt) -- (0,11pt);
    \draw[](0,11pt) -- (-8pt,21pt);
    \draw[](0,11pt) -- (8pt,21pt);
  }\,
  +
  \,\tikz[baseline=(basepoint)]{ 
    \path (0,3pt) coordinate (basepoint) (8pt,1pt) node[dot] {} (-8pt,11pt) node[dot] {};
    \draw[](-8pt,-9pt) -- (-8pt,11pt);
    \draw[](8pt,-9pt) -- (8pt,1pt);
    \draw[](8pt,1pt) -- (-8pt,11pt);
    \draw[](-8pt,11pt) -- (-8pt,21pt);
    \draw[](8pt,1pt) -- (8pt,21pt);
  }\,
    $};
    \draw (main.south) ++(-6pt,0) -- ++(0pt,-8pt) node[anchor=north] {$\scriptstyle x$};
    \draw (main.south) ++(6pt,0) -- ++(0pt,-8pt) node[anchor=north] {$\scriptstyle y$};
    \draw (main.north) ++(-6pt,0) -- ++(0,8pt);
    \draw (main.north) ++(6pt,0) -- ++(0,8pt);
  }\,
  =  
  \int_{v} \Phi_{\epsilon}(x-v)\,\Phi_{\epsilon}(y-v)\,
   \int_w \bigl(\Theta(w-v) - \Theta(w-y)\bigr)\, \alpha(v,w).
  \end{equation}
  That this has the correct derivative is clear, but we must check quasilocality.  The integral kernel is $\Phi_{\epsilon}(x-v)\,\Phi_{\epsilon}(y-v)\,\bigl(\Theta(w-v) - \Theta(w-y)\bigr)$, which  certainly vanishes except when $|x-v|<\epsilon$ and $|y-v|<\epsilon$.  The final term $\bigl(\Theta(w-v) - \Theta(w-y)\bigr)$ is non-zero only when $w$ is between~$y$ and~$v$ (regardless of which is greater), which are already forced near each other.  It follows that all four of $x,y,v,w$ must be  within $2\epsilon$ of each other in order for the kernel to be non-zero, thus verifying quasilocality.  
  
    We now turn to the homotopy imposing coassociativity.  According to equation~(\ref{eqn.coassociator}), that homotopy should be a primitive of
  \begin{align*} \left[\d,
    \,\tikz[baseline=(main.base)]{
    \node[draw,rectangle,inner sep=2pt] (main) {$
   -   \,\tikz[baseline=(basepoint)]{ 
    \path (0,3pt) coordinate (basepoint) (8pt,1pt) node[dot] {} (0,11pt) node[dot] {};
    \draw[](-12pt,-9pt) -- (0,11pt);
    \draw[](0pt,-9pt) -- (8pt,1pt);
    \draw[](12pt,-9pt) -- (8pt,1pt);
    \draw[](8pt,1pt) -- (0,11pt);
    \draw[](0,11pt) -- (0,21pt);
  }\,
  -
  \,\tikz[baseline=(basepoint)]{ 
    \path (0,3pt) coordinate (basepoint) (-8pt,1pt) node[dot] {} (0,11pt) node[dot] {};
    \draw[](12pt,-9pt) -- (0,11pt);
    \draw[](-12pt,-9pt) -- (-8pt,1pt);
    \draw[](0pt,-9pt) -- (-8pt,1pt);
    \draw[](-8pt,1pt) -- (0,11pt);
    \draw[](0,11pt) -- (0,21pt);
  }\,
    $};
    \draw (main.north) -- ++(0,8pt);
    \draw (main.south) ++(-12pt,0) -- ++(0,-8pt) node[anchor=north] {$\scriptstyle x$};
    \draw (main.south) ++(0pt,0) -- ++(0,-8pt) node[anchor=north] {$\scriptstyle y$};
    \draw (main.south) ++(12pt,0) -- ++(0,-8pt) node[anchor=north] {$\scriptstyle z$};
  }\,
  \right]
  & \want 
  -
  \int_{w}
  \Phi_{\epsilon}(y-w)\, \Phi_{\epsilon}(z-w)\,
  \int_{v}
  \Phi_{\epsilon}(x-v)\, \Phi_{\epsilon}(w-v)\,
  \alpha(v)
  \\[-24pt] & \hspace{1in}  - 
  \int_{w}
  \Phi_{\epsilon}(x-w)\, \Phi_{\epsilon}(y-w)\,
  \int_{v}
  \Phi_{\epsilon}(w-v)\, \Phi_{\epsilon}(z-v)\,
  \alpha(v) \\
  & \wantspacing 
    -
  \int_{w}
  \Phi_{\epsilon}(y-w)\, \Phi_{\epsilon}(z-w)\,
  \int_{v}
  \Phi_{\epsilon}(x-v)\, \Phi_{\epsilon}(w-v)\,
  \alpha(v)
  \\ & \hspace{1in}  +
  \int_{w}
  \Phi_{\epsilon}(x-w)\, \Phi_{\epsilon}(y-w)\,
  \int_{v}
  \Phi_{\epsilon}(z-v)\, \Phi_{\epsilon}(w-v)\,
  \alpha(v). \\
  \end{align*}
  In addition to being quasilocal, the primitive must live within a copy of the 2-dimensional representation \tikz \draw (0,0) rectangle (4pt,4pt) (4pt,0pt) rectangle (8pt,4pt) (0pt,-4pt)  rectangle  (4pt,0pt) ; under the $\bS_{3}$-action permuting the variables $x,y,z$.

Let us suppose that we can instead get partway, finding a quasilocal solution to
  \begin{align} \label{eqn.formula for coassociator} \left[\d,
      \,\tikz[baseline=(main.base)]{
    \node[draw,rectangle,inner sep=2pt] (main) {$
  \,\tikz[baseline=(basepoint)]{ 
    \path (0,3pt) coordinate (basepoint)  (0,11pt) node[dot] {};
    \draw[](12pt,-9pt) -- (0,11pt);
    \draw[](-12pt,-9pt) -- (0,11pt);
    \draw[](0pt,-9pt) -- (0,11pt);
    \draw[](0,11pt) -- (0,21pt);
  }\,  
   -   \,\tikz[baseline=(basepoint)]{ 
    \path (0,3pt) coordinate (basepoint) (8pt,1pt) node[dot] {} (0,11pt) node[dot] {};
    \draw[](-12pt,-9pt) -- (0,11pt);
    \draw[](0pt,-9pt) -- (8pt,1pt);
    \draw[](12pt,-9pt) -- (8pt,1pt);
    \draw[](8pt,1pt) -- (0,11pt);
    \draw[](0,11pt) -- (0,21pt);
  }\,
    $};
    \draw (main.north) -- ++(0,8pt);
    \draw (main.south) ++(-12pt,0) -- ++(0,-8pt) node[anchor=north] {$\scriptstyle x$};
    \draw (main.south) ++(0pt,0) -- ++(0,-8pt) node[anchor=north] {$\scriptstyle y$};
    \draw (main.south) ++(12pt,0) -- ++(0,-8pt) node[anchor=north] {$\scriptstyle z$};
  }\,
  \right]
  & \want 
  \int_{v}
  \Phi_{\epsilon}(x-v)\, 
  \int_{w}
  \bigl(\delta(w-v) - \Phi_{\epsilon}(w-v)\bigr)\,
  \Phi_{\epsilon}(y-w)\, \Phi_{\epsilon}(z-w)\,
  \alpha(v)
    \\[-18pt] \notag &
  \wantspacing
  \int_{v}
  \Phi_{\epsilon}(x-v)\,\Phi_{\epsilon}(y-v)\, \Phi_{\epsilon}(z-v)\,
  \alpha(v)
    \\ \notag
  & \hspace{1in} - 
  \int_{w}
  \Phi_{\epsilon}(y-w)\, \Phi_{\epsilon}(z-w)\,
  \int_{v}
  \Phi_{\epsilon}(x-v)\, \Phi_{\epsilon}(w-v)\,
  \alpha(v)
  \end{align}
 which (like its derivative) is antisymmetric under swapping $y$ and $z$.  Then we can set
  $$ 
      \,\tikz[baseline=(main.base)]{
    \node[draw,rectangle,inner sep=2pt] (main) {$
   -   \,\tikz[baseline=(basepoint)]{ 
    \path (0,3pt) coordinate (basepoint) (8pt,1pt) node[dot] {} (0,11pt) node[dot] {};
    \draw[](-12pt,-9pt) -- (0,11pt);
    \draw[](0pt,-9pt) -- (8pt,1pt);
    \draw[](12pt,-9pt) -- (8pt,1pt);
    \draw[](8pt,1pt) -- (0,11pt);
    \draw[](0,11pt) -- (0,21pt);
  }\,
  -
  \,\tikz[baseline=(basepoint)]{ 
    \path (0,3pt) coordinate (basepoint) (-8pt,1pt) node[dot] {} (0,11pt) node[dot] {};
    \draw[](12pt,-9pt) -- (0,11pt);
    \draw[](-12pt,-9pt) -- (-8pt,1pt);
    \draw[](0pt,-9pt) -- (-8pt,1pt);
    \draw[](-8pt,1pt) -- (0,11pt);
    \draw[](0,11pt) -- (0,21pt);
  }\,
    $};
    \draw (main.north) -- ++(0,8pt);
    \draw (main.south) ++(-12pt,0) -- ++(0,-8pt) node[anchor=north] {$\scriptstyle x$};
    \draw (main.south) ++(0pt,0) -- ++(0,-8pt) node[anchor=north] {$\scriptstyle y$};
    \draw (main.south) ++(12pt,0) -- ++(0,-8pt) node[anchor=north] {$\scriptstyle z$};
  }\,
  =
    \,\tikz[baseline=(main.base)]{
    \node[draw,rectangle,inner sep=2pt] (main) {$
  \,\tikz[baseline=(basepoint)]{ 
    \path (0,3pt) coordinate (basepoint)  (0,11pt) node[dot] {};
    \draw[](12pt,-9pt) -- (0,11pt);
    \draw[](-12pt,-9pt) -- (0,11pt);
    \draw[](0pt,-9pt) -- (0,11pt);
    \draw[](0,11pt) -- (0,21pt);
  }\,  
   -   \,\tikz[baseline=(basepoint)]{ 
    \path (0,3pt) coordinate (basepoint) (8pt,1pt) node[dot] {} (0,11pt) node[dot] {};
    \draw[](-12pt,-9pt) -- (0,11pt);
    \draw[](0pt,-9pt) -- (8pt,1pt);
    \draw[](12pt,-9pt) -- (8pt,1pt);
    \draw[](8pt,1pt) -- (0,11pt);
    \draw[](0,11pt) -- (0,21pt);
  }\,
    $};
    \draw (main.north) -- ++(0,8pt);
    \draw (main.south) ++(-12pt,0) -- ++(0,-8pt) node[anchor=north] {$\scriptstyle x$};
    \draw (main.south) ++(0pt,0) -- ++(0,-8pt) node[anchor=north] {$\scriptstyle y$};
    \draw (main.south) ++(12pt,0) -- ++(0,-8pt) node[anchor=north] {$\scriptstyle z$};
  }\,
  -  
      \,\tikz[baseline=(main.base)]{
    \node[draw,rectangle,inner sep=2pt] (main) {$
  \,\tikz[baseline=(basepoint)]{ 
    \path (0,3pt) coordinate (basepoint)  (0,11pt) node[dot] {};
    \draw[](12pt,-9pt) -- (0,11pt);
    \draw[](-12pt,-9pt) -- (0,11pt);
    \draw[](0pt,-9pt) -- (0,11pt);
    \draw[](0,11pt) -- (0,21pt);
  }\,  
   -   \,\tikz[baseline=(basepoint)]{ 
    \path (0,3pt) coordinate (basepoint) (8pt,1pt) node[dot] {} (0,11pt) node[dot] {};
    \draw[](-12pt,-9pt) -- (0,11pt);
    \draw[](0pt,-9pt) -- (8pt,1pt);
    \draw[](12pt,-9pt) -- (8pt,1pt);
    \draw[](8pt,1pt) -- (0,11pt);
    \draw[](0,11pt) -- (0,21pt);
  }\,
    $};
    \draw (main.north) -- ++(0,8pt);
    \draw (main.south) ++(-12pt,0) -- ++(0,-8pt) node[anchor=north] {$\scriptstyle z$};
    \draw (main.south) ++(0pt,0) -- ++(0,-8pt) node[anchor=north] {$\scriptstyle x$};
    \draw (main.south) ++(12pt,0) -- ++(0,-8pt) node[anchor=north] {$\scriptstyle y$};
  }\,,
  $$
  i.e.
  \begin{align} \label{diagrammaticcoassocdefn}
      \,\tikz[baseline=(main.base)]{
    \node[draw,rectangle,inner sep=2pt] (main) {$
   -   \,\tikz[baseline=(basepoint)]{ 
    \path (0,3pt) coordinate (basepoint) (8pt,1pt) node[dot] {} (0,11pt) node[dot] {};
    \draw[](-12pt,-9pt) -- (0,11pt);
    \draw[](0pt,-9pt) -- (8pt,1pt);
    \draw[](12pt,-9pt) -- (8pt,1pt);
    \draw[](8pt,1pt) -- (0,11pt);
    \draw[](0,11pt) -- (0,21pt);
  }\,
  -
  \,\tikz[baseline=(basepoint)]{ 
    \path (0,3pt) coordinate (basepoint) (-8pt,1pt) node[dot] {} (0,11pt) node[dot] {};
    \draw[](12pt,-9pt) -- (0,11pt);
    \draw[](-12pt,-9pt) -- (-8pt,1pt);
    \draw[](0pt,-9pt) -- (-8pt,1pt);
    \draw[](-8pt,1pt) -- (0,11pt);
    \draw[](0,11pt) -- (0,21pt);
  }\,
    $};
    \draw (main.north) -- ++(0,8pt);
    \draw (main.south) ++(-12pt,0) -- ++(0,-16pt) ;
    \draw (main.south) ++(0pt,0) -- ++(0,-16pt);
    \draw (main.south) ++(12pt,0) -- ++(0,-16pt);
  }\,
  & =
    \,\tikz[baseline=(main.base)]{
    \node[draw,rectangle,inner sep=2pt] (main) {$
  \,\tikz[baseline=(basepoint)]{ 
    \path (0,3pt) coordinate (basepoint)  (0,11pt) node[dot] {};
    \draw[](12pt,-9pt) -- (0,11pt);
    \draw[](-12pt,-9pt) -- (0,11pt);
    \draw[](0pt,-9pt) -- (0,11pt);
    \draw[](0,11pt) -- (0,21pt);
  }\,  
   -   \,\tikz[baseline=(basepoint)]{ 
    \path (0,3pt) coordinate (basepoint) (8pt,1pt) node[dot] {} (0,11pt) node[dot] {};
    \draw[](-12pt,-9pt) -- (0,11pt);
    \draw[](0pt,-9pt) -- (8pt,1pt);
    \draw[](12pt,-9pt) -- (8pt,1pt);
    \draw[](8pt,1pt) -- (0,11pt);
    \draw[](0,11pt) -- (0,21pt);
  }\,
    $};
    \draw (main.north) -- ++(0,8pt);
    \draw (main.south) ++(-12pt,0) -- ++(0,-16pt) ;
    \draw (main.south) ++(0pt,0) -- ++(0,-16pt);
    \draw (main.south) ++(12pt,0) -- ++(0,-16pt);
  }\,
  -  
      \,\tikz[baseline=(main.base)]{
    \node[draw,rectangle,inner sep=2pt] (main) {$
  \,\tikz[baseline=(basepoint)]{ 
    \path (0,3pt) coordinate (basepoint)  (0,11pt) node[dot] {};
    \draw[](12pt,-9pt) -- (0,11pt);
    \draw[](-12pt,-9pt) -- (0,11pt);
    \draw[](0pt,-9pt) -- (0,11pt);
    \draw[](0,11pt) -- (0,21pt);
  }\,  
   -   \,\tikz[baseline=(basepoint)]{ 
    \path (0,3pt) coordinate (basepoint) (8pt,1pt) node[dot] {} (0,11pt) node[dot] {};
    \draw[](-12pt,-9pt) -- (0,11pt);
    \draw[](0pt,-9pt) -- (8pt,1pt);
    \draw[](12pt,-9pt) -- (8pt,1pt);
    \draw[](8pt,1pt) -- (0,11pt);
    \draw[](0,11pt) -- (0,21pt);
  }\,
    $};
    \draw (main.north) -- ++(0,8pt);
    \draw (main.south) ++(-12pt,0) -- ++(24pt,-16pt) ;
    \draw (main.south) ++(0pt,0) -- ++(-12pt,-16pt) ;
    \draw (main.south) ++(12pt,0) -- ++(-12pt,-16pt) ;
  }\,
  \\ \notag & =
  \left( 
\,\tikz[baseline=(base)] \draw coordinate (base) ++(0,10pt) -- +(0pt,-16pt) ++(12pt,0) -- +(0pt,-16pt) ++(12pt,0) -- ++(0pt,-16pt);\,  -
  \,\tikz[baseline=(base)] \draw coordinate (base) ++(0,10pt) -- +(24pt,-16pt) ++(12pt,0) -- +(-12pt,-16pt) ++(12pt,0) -- ++(-12pt,-16pt);\,
  \right)
  \circ \,\tikz[baseline=(main.base)]{
    \node[draw,rectangle,inner sep=2pt] (main) {$
  \,\tikz[baseline=(basepoint)]{ 
    \path (0,3pt) coordinate (basepoint)  (0,11pt) node[dot] {};
    \draw[](12pt,-9pt) -- (0,11pt);
    \draw[](-12pt,-9pt) -- (0,11pt);
    \draw[](0pt,-9pt) -- (0,11pt);
    \draw[](0,11pt) -- (0,21pt);
  }\,  
   -   \,\tikz[baseline=(basepoint)]{ 
    \path (0,3pt) coordinate (basepoint) (8pt,1pt) node[dot] {} (0,11pt) node[dot] {};
    \draw[](-12pt,-9pt) -- (0,11pt);
    \draw[](0pt,-9pt) -- (8pt,1pt);
    \draw[](12pt,-9pt) -- (8pt,1pt);
    \draw[](8pt,1pt) -- (0,11pt);
    \draw[](0,11pt) -- (0,21pt);
  }\,
    $};
    \draw (main.north) -- ++(0,8pt);
    \draw (main.south) ++(-12pt,0) -- ++(0,-16pt) ;
    \draw (main.south) ++(0pt,0) -- ++(0,-16pt);
    \draw (main.south) ++(12pt,0) -- ++(0,-16pt);
  }\,.
  \end{align}
  To see that this works, note that the first summand in the right-hand side of equation~(\ref{eqn.formula for coassociator}) is fixed by the permutation \,\tikz[baseline=(base)] \draw coordinate (base) ++(0,9pt) -- +(16pt,-12pt) ++(8pt,0) -- +(-8pt,-12pt) ++(8pt,0) -- ++(-8pt,-12pt);\,, and so cancels upon composing with $
  \,\tikz[baseline=(base)] \draw coordinate (base) ++(0,9pt) -- +(0pt,-12pt) ++(8pt,0) -- +(0pt,-12pt) ++(8pt,0) -- ++(0pt,-12pt);\,
  -
  \,\tikz[baseline=(base)] \draw coordinate (base) ++(0,9pt) -- +(16pt,-12pt) ++(8pt,0) -- +(-8pt,-12pt) ++(8pt,0) -- ++(-8pt,-12pt);\,$, and the other two terms give exactly the desired derivative.  Moreover, the two-dimensional $\bS_{3}$-representation \tikz \draw (0,0) rectangle (4pt,4pt) (4pt,0pt) rectangle (8pt,4pt) (0pt,-4pt)  rectangle  (4pt,0pt) ; is generated, for example, by a non-zero element $\xi \in \tikz \draw (0,0) rectangle (4pt,4pt) (4pt,0pt) rectangle (8pt,4pt) (0pt,-4pt)  rectangle  (4pt,0pt) ;$ satisfying 
  $\bigl( \,\tikz[baseline=(base)] \draw coordinate (base) ++(0,9pt) -- +(0pt,-12pt) ++(8pt,0) -- +(0pt,-12pt) ++(8pt,0) -- ++(0pt,-12pt);\,
  +
  \,\tikz[baseline=(base)] \draw coordinate (base) ++(0,9pt) -- +(16pt,-12pt) ++(8pt,0) -- +(-8pt,-12pt) ++(8pt,0) -- ++(-8pt,-12pt);\,
    +
  \,\tikz[baseline=(base)] \draw coordinate (base) ++(0,9pt) -- +(8pt,-12pt) ++(8pt,0) -- +(8pt,-12pt) ++(8pt,0) -- ++(-16pt,-12pt);\, 
  \bigr) \,\xi = \bigl( 
  \,\tikz[baseline=(base)] \draw coordinate (base) ++(0,9pt) -- +(0pt,-12pt) ++(8pt,0) -- +(0pt,-12pt) ++(8pt,0) -- ++(0pt,-12pt);\,
  -
  \,\tikz[baseline=(base)] \draw coordinate (base) ++(0,9pt) -- +(16pt,-12pt) ++(8pt,0) -- +(0pt,-12pt) ++(8pt,0) -- ++(-16pt,-12pt);\,\bigr)\,\xi = 0$.  But
  $$ \bigl( 
  \,\tikz[baseline=(base)] \draw coordinate (base) ++(0,9pt) -- +(0pt,-12pt) ++(8pt,0) -- +(0pt,-12pt) ++(8pt,0) -- ++(0pt,-12pt);\,
  +
  \,\tikz[baseline=(base)] \draw coordinate (base) ++(0,9pt) -- +(16pt,-12pt) ++(8pt,0) -- +(-8pt,-12pt) ++(8pt,0) -- ++(-8pt,-12pt);\,
    +
  \,\tikz[baseline=(base)] \draw coordinate (base) ++(0,9pt) -- +(8pt,-12pt) ++(8pt,0) -- +(8pt,-12pt) ++(8pt,0) -- ++(-16pt,-12pt);\, 
  \bigr)
  \circ
  \bigl( 
  \,\tikz[baseline=(base)] \draw coordinate (base) ++(0,9pt) -- +(0pt,-12pt) ++(8pt,0) -- +(0pt,-12pt) ++(8pt,0) -- ++(0pt,-12pt);\,
  -
  \,\tikz[baseline=(base)] \draw coordinate (base) ++(0,9pt) -- +(16pt,-12pt) ++(8pt,0) -- +(-8pt,-12pt) ++(8pt,0) -- ++(-8pt,-12pt);\,
  \bigr)
  = 0$$
  automatically, and
  $$
  \bigl( 
  \,\tikz[baseline=(base)] \draw coordinate (base) ++(0,9pt) -- +(0pt,-12pt) ++(8pt,0) -- +(0pt,-12pt) ++(8pt,0) -- ++(0pt,-12pt);\,
  -
  \,\tikz[baseline=(base)] \draw coordinate (base) ++(0,9pt) -- +(16pt,-12pt) ++(8pt,0) -- +(0pt,-12pt) ++(8pt,0) -- ++(-16pt,-12pt);\,\bigr)
    \circ
  \bigl( 
  \,\tikz[baseline=(base)] \draw coordinate (base) ++(0,9pt) -- +(0pt,-12pt) ++(8pt,0) -- +(0pt,-12pt) ++(8pt,0) -- ++(0pt,-12pt);\,
  -
  \,\tikz[baseline=(base)] \draw coordinate (base) ++(0,9pt) -- +(16pt,-12pt) ++(8pt,0) -- +(-8pt,-12pt) ++(8pt,0) -- ++(-8pt,-12pt);\,
  \bigr)
  =
  \bigl( 
  \,\tikz[baseline=(base)] \draw coordinate (base) ++(0,9pt) -- +(0pt,-12pt) ++(8pt,0) -- +(0pt,-12pt) ++(8pt,0) -- ++(0pt,-12pt);\,
  -
  \,\tikz[baseline=(base)] \draw coordinate (base) ++(0,9pt) -- +(16pt,-12pt) ++(8pt,0) -- +(-8pt,-12pt) ++(8pt,0) -- ++(-8pt,-12pt);\,
  \bigr)
  \circ
    \bigl( 
  \,\tikz[baseline=(base)] \draw coordinate (base) ++(0,9pt) -- +(0pt,-12pt) ++(8pt,0) -- +(0pt,-12pt) ++(8pt,0) -- ++(0pt,-12pt);\,
  +
  \,\tikz[baseline=(base)] \draw coordinate (base) ++(0,9pt) -- +(0pt,-12pt) ++(8pt,0) -- +(8pt,-12pt) ++(8pt,0) -- ++(-8pt,-12pt);\,\bigr),
  $$
  and we have requested that
  $$\left( 
\,\tikz[baseline=(base)] \draw coordinate (base) ++(0,10pt) -- +(0pt,-16pt) ++(12pt,0) -- +(0pt,-16pt) ++(12pt,0) -- ++(0pt,-16pt);\,  +
  \,\tikz[baseline=(base)] \draw coordinate (base) ++(0,10pt) -- +(0pt,-16pt) ++(12pt,0) -- +(12pt,-16pt) ++(12pt,0) -- ++(-12pt,-16pt);\,
  \right)
  \circ \,\tikz[baseline=(main.base)]{
    \node[draw,rectangle,inner sep=2pt] (main) {$
  \,\tikz[baseline=(basepoint)]{ 
    \path (0,3pt) coordinate (basepoint)  (0,11pt) node[dot] {};
    \draw[](12pt,-9pt) -- (0,11pt);
    \draw[](-12pt,-9pt) -- (0,11pt);
    \draw[](0pt,-9pt) -- (0,11pt);
    \draw[](0,11pt) -- (0,21pt);
  }\,  
   -   \,\tikz[baseline=(basepoint)]{ 
    \path (0,3pt) coordinate (basepoint) (8pt,1pt) node[dot] {} (0,11pt) node[dot] {};
    \draw[](-12pt,-9pt) -- (0,11pt);
    \draw[](0pt,-9pt) -- (8pt,1pt);
    \draw[](12pt,-9pt) -- (8pt,1pt);
    \draw[](8pt,1pt) -- (0,11pt);
    \draw[](0,11pt) -- (0,21pt);
  }\,
    $};
    \draw (main.north) -- ++(0,8pt);
    \draw (main.south) ++(-12pt,0) -- ++(0,-16pt) ;
    \draw (main.south) ++(0pt,0) -- ++(0,-16pt);
    \draw (main.south) ++(12pt,0) -- ++(0,-16pt);
  }\,
  = 0.
  $$
  
    It therefore suffices to find a solution to equation~(\ref{eqn.formula for coassociator}).  But the following works:
  \begin{equation}\label{half of the coassociator}
    \,\tikz[baseline=(main.base)]{
    \node[draw,rectangle,inner sep=2pt] (main) {$
  \,\tikz[baseline=(basepoint)]{ 
    \path (0,3pt) coordinate (basepoint)  (0,11pt) node[dot] {};
    \draw[](12pt,-9pt) -- (0,11pt);
    \draw[](-12pt,-9pt) -- (0,11pt);
    \draw[](0pt,-9pt) -- (0,11pt);
    \draw[](0,11pt) -- (0,21pt);
  }\,  
   -   \,\tikz[baseline=(basepoint)]{ 
    \path (0,3pt) coordinate (basepoint) (8pt,1pt) node[dot] {} (0,11pt) node[dot] {};
    \draw[](-12pt,-9pt) -- (0,11pt);
    \draw[](0pt,-9pt) -- (8pt,1pt);
    \draw[](12pt,-9pt) -- (8pt,1pt);
    \draw[](8pt,1pt) -- (0,11pt);
    \draw[](0,11pt) -- (0,21pt);
  }\,
    $};
    \draw (main.north) -- ++(0,8pt);
    \draw (main.south) ++(-12pt,0) -- ++(0,-8pt) node[anchor=north] {$\scriptstyle x$};
    \draw (main.south) ++(0pt,0) -- ++(0,-8pt) node[anchor=north] {$\scriptstyle y$};
    \draw (main.south) ++(12pt,0) -- ++(0,-8pt) node[anchor=north] {$\scriptstyle z$};
  }\,
   = 
  \int_{v}
  \Phi_{\epsilon}(x-v)\, 
  \int_{w}
  \bigl(F_{\epsilon}(w-v) - \Theta(w-v) \bigr)
  \Phi_{\epsilon}(y-w)\, \Phi_{\epsilon}(z-w)\,
  \alpha(v)\end{equation}
    Indeed, when differentiating, the $\d$ hits only the term $\bigl(F_{\epsilon}(w-v) - \Theta(w-v) \bigr)$, picking up a sign along the way (since it has to pass three degree-odd terms), to produce  $\delta(w-v) - \Phi_{\epsilon}(w-v)$, giving the desired derivative.  It is clearly antisymmetric under swapping $y$ and $z$.  Quasilocality is also clear: the terms $\Phi_\epsilon(x-v)$, $\Phi_{\epsilon}(y-w)$, and $\Phi_{\epsilon}(z-w)$ are non-zero only when $|x-v|<\epsilon$, $|y-w|<\epsilon$, and $|z-w|<\epsilon$, respectively, and $\bigl(F_{\epsilon}(w-v) - \Theta(w-v) \bigr)$ is non-zero only when $|w-v|<\epsilon$.

  Having now addressed all genus-zero generators for which the obstruction could have failed to vanish, we turn to the generators of genus-one.
  Because in equation~(\ref{eqn.assoc=0}) we were able to choose the homotopy controlling associativity to vanish identically, equations~(\ref{Dderivative}) and~(\ref{eqn.answer for frobeniator}) give:
  \begin{equation}
  \label{obstruction for Dgraphbox}
    \left[\d,
  \,\tikz[baseline=(main.base)]{
    \node[draw,rectangle,inner sep=2pt] (main) {\graphD};
    \draw (main.south) -- ++(0,-8pt) node[anchor=north] {$\scriptstyle x$};
    \draw (main.north) ++(-6pt,0) -- ++(0,8pt);
    \draw (main.north) ++(6pt,0) -- ++(0,8pt);
  }\,
  \right] \want 
  -
  \;\begin{tikzpicture}[baseline=(outerbasepoint)]
  \node[draw,inner sep=2pt] (box) {
    $
    \,\tikz[baseline=(basepoint)]{ 
      \path (0,3pt) coordinate (basepoint) (0,1pt) node[dot] {} (0,11pt) node[dot] {};
      \draw[](-8pt,-9pt) -- (0,1pt);
      \draw[](8pt,-9pt) -- (0,1pt);
      \draw[](0,1pt) -- (0,11pt);
      \draw[](0,11pt) -- (-8pt,21pt);
      \draw[](0,11pt) -- (8pt,21pt);
    }\,
    +
    \,\tikz[baseline=(basepoint)]{ 
      \path (0,3pt) coordinate (basepoint) (8pt,1pt) node[dot] {} (-8pt,11pt) node[dot] {};
      \draw[](-8pt,-9pt) -- (-8pt,11pt);
      \draw[](8pt,-9pt) -- (8pt,1pt);
      \draw[](8pt,1pt) -- (-8pt,11pt);
      \draw[](-8pt,11pt) -- (-8pt,21pt);
      \draw[](8pt,1pt) -- (8pt,21pt);
    }\,
    $};
  \draw (box.north) ++(6pt,0) -- ++(0,10pt) coordinate(top);
  \draw (box.north) ++(-6pt,0) -- ++(0,10pt);
  \draw (box.south) ++(6pt,0) -- ++(0,-10pt);
  \draw (box.south) ++(-6pt,0) -- ++(0,-10pt);
  \draw (box.south) ++(0,-18pt) node[draw,inner sep=2pt,fill=white] (lowerbox) {\mult};
  \draw (lowerbox.south) -- ++(0,-10pt) coordinate (bottom) node[anchor=north] {$\scriptstyle x$};
  \path (bottom) ++(0,40pt) coordinate (outerbasepoint);  
  \end{tikzpicture}\;
  =
  -  \int_{v} \Phi_{\epsilon}(x-v)\,\Phi_{\epsilon}(x-v)\,
   \int_w \bigl(\Theta(w-v) - \Theta(w-y)\bigr)\, \alpha(v,w).
  \end{equation}
  But this vanishes since $\Phi_{\epsilon}(x-v)$ is a one-form, so $\Phi_{\epsilon}(x-v)\,\Phi_{\epsilon}(x-v) = 0$.  We can therefore choose:
    \begin{equation} \label{expression for Dgraphbox}
  \,\tikz[baseline=(main.base)]{
    \node[draw,rectangle,inner sep=2pt] (main) {\graphD};
    \draw (main.south) -- ++(0,-8pt);
    \draw (main.north) ++(-6pt,0) -- ++(0,8pt);
    \draw (main.north) ++(6pt,0) -- ++(0,8pt);
  }\,
  = 0. \end{equation}

  We do not get so lucky with the other genus-one generator.  Inspecting equation~(\ref{Aderivative}) and substituting in equations~(\ref{eqn.answer for frobeniator}), (\ref{diagrammaticcoassocdefn}), and~(\ref{half of the coassociator}) gives:
  \begin{align}\label{obstruction for Agraph}
    \left[ \d,\,\tikz[baseline=(main.base)]{
    \node[draw,rectangle,inner sep=2pt] (main) {\graphA};
    \draw (main.north) -- ++(0,8pt);
    \draw (main.south) ++(-6pt,0) -- ++(0,-8pt) node[anchor=north] {$\scriptstyle x$};
    \draw (main.south) ++(6pt,0) -- ++(0,-8pt) node[anchor=north] {$\scriptstyle y$};
  }\,\right] 
  & \want
    -
  \;\begin{tikzpicture}[baseline=(outerbasepoint)]
  \node[draw,inner sep=2pt] (box) {
    $
    \,\tikz[baseline=(basepoint)]{ 
      \path (0,3pt) coordinate (basepoint) (0,1pt) node[dot] {} (0,11pt) node[dot] {};
      \draw[](-8pt,-9pt) -- (0,1pt);
      \draw[](8pt,-9pt) -- (0,1pt);
      \draw[](0,1pt) -- (0,11pt);
      \draw[](0,11pt) -- (-8pt,21pt);
      \draw[](0,11pt) -- (8pt,21pt);
    }\,
    +
    \,\tikz[baseline=(basepoint)]{ 
      \path (0,3pt) coordinate (basepoint) (8pt,1pt) node[dot] {} (-8pt,11pt) node[dot] {};
      \draw[](-8pt,-9pt) -- (-8pt,11pt);
      \draw[](8pt,-9pt) -- (8pt,1pt);
      \draw[](8pt,1pt) -- (-8pt,11pt);
      \draw[](-8pt,11pt) -- (-8pt,21pt);
      \draw[](8pt,1pt) -- (8pt,21pt);
    }\,
    $
    };
  \draw (box.south) ++(6pt,0) -- ++(0,-10pt) coordinate (bottom) node[anchor=north] {$\scriptstyle y$};
  \draw (box.south) ++(-6pt,0) -- ++(0,-10pt) coordinate (bottom) node[anchor=north] {$\scriptstyle x$};
  \draw (box.north) ++(6pt,0) -- ++(0pt,10pt);
  \draw (box.north) ++(-6pt,0) -- ++(0pt,10pt);
  \draw (box.north) ++(-0pt,18pt) node[draw,inner sep=2pt,fill=white] (upperbox) {\comult};
  \draw (upperbox.north) -- ++(0,10pt) ++(18pt,0) coordinate (top);
  \path (bottom) ++(0,40pt) coordinate (outerbasepoint);  
  \end{tikzpicture}\;
  \;+\;
  \;\begin{tikzpicture}[baseline=(outerbasepoint)]
  \node[draw,inner sep=2pt] (box) {
    $
  \,\tikz[baseline=(basepoint)]{ 
    \path (0,3pt) coordinate (basepoint)  (0,11pt) node[dot] {};
    \draw[](12pt,-9pt) -- (0,11pt);
    \draw[](-12pt,-9pt) -- (0,11pt);
    \draw[](0pt,-9pt) -- (0,11pt);
    \draw[](0,11pt) -- (0,21pt);
  }\,  
   -   \,\tikz[baseline=(basepoint)]{ 
    \path (0,3pt) coordinate (basepoint) (8pt,1pt) node[dot] {} (0,11pt) node[dot] {};
    \draw[](-12pt,-9pt) -- (0,11pt);
    \draw[](0pt,-9pt) -- (8pt,1pt);
    \draw[](12pt,-9pt) -- (8pt,1pt);
    \draw[](8pt,1pt) -- (0,11pt);
    \draw[](0,11pt) -- (0,21pt);
  }\,
    $
    };
  \draw (box.north) ++(0pt,0) -- ++(0,10pt) coordinate(top);
  \draw (box.south) ++(0pt,0) -- ++(0,-10pt);
  \draw (box.south) ++(12pt,0) -- ++(0,-10pt);
  \draw (box.south) ++(+6pt,-18pt) node[draw,inner sep=2pt,fill=white] (lowerbox) {\mult};
  \draw (lowerbox.south) -- ++(0,-10pt) node[anchor=north] {$\scriptstyle y$} ++(-18pt,0pt) coordinate (bottom) node[anchor=north] {$\scriptstyle x$};
  \draw (box.south) ++(-12pt,0) -- (bottom);
  \path (bottom) ++(0,40pt) coordinate (outerbasepoint);  
  \end{tikzpicture}\;
  \;-\;
  \;\begin{tikzpicture}[baseline=(outerbasepoint)]
  \node[draw,inner sep=2pt] (box) {
    $
  \,\tikz[baseline=(basepoint)]{ 
    \path (0,3pt) coordinate (basepoint)  (0,11pt) node[dot] {};
    \draw[](12pt,-9pt) -- (0,11pt);
    \draw[](-12pt,-9pt) -- (0,11pt);
    \draw[](0pt,-9pt) -- (0,11pt);
    \draw[](0,11pt) -- (0,21pt);
  }\,  
   -   \,\tikz[baseline=(basepoint)]{ 
    \path (0,3pt) coordinate (basepoint) (8pt,1pt) node[dot] {} (0,11pt) node[dot] {};
    \draw[](-12pt,-9pt) -- (0,11pt);
    \draw[](0pt,-9pt) -- (8pt,1pt);
    \draw[](12pt,-9pt) -- (8pt,1pt);
    \draw[](8pt,1pt) -- (0,11pt);
    \draw[](0,11pt) -- (0,21pt);
  }\,
    $
    };
  \draw (box.north) ++(0pt,0) -- ++(0,10pt) coordinate(top);
  \draw (box.south) ++(-12pt,0) -- ++(24pt,-10pt);
  \draw (box.south) ++(12pt,0) -- ++(-12pt,-10pt);
  \draw (box.south) ++(+6pt,-20pt) node[draw,inner sep=2pt,fill=white] (lowerbox) {\mult};
  \draw (lowerbox.south) -- ++(0,-8pt) node[anchor=north] {$\scriptstyle y$} ++(-18pt,0pt) coordinate (bottom) node[anchor=north] {$\scriptstyle x$};
  \draw (box.south) ++(0pt,0) .. controls +(-12pt,-10pt) and +(0,20pt) ..  (bottom);
  \path (bottom) ++(0,40pt) coordinate (outerbasepoint);  
  \end{tikzpicture}\;
  \\ \notag
  & \wantspacing 
   - \int_{u} \Phi_{\epsilon}(x-u)\,\Phi_{\epsilon}(y-u)\,
   \int_{w=u}^y \int_v \Phi_\epsilon(u-v)\,\Phi_\epsilon(w-v)\,\alpha(v) 
   \\ \notag & \hspace{.5in} + 0 
   \\ \notag & \hspace{.5in} -   \int_{v}
  \Phi_{\epsilon}(y-v)
  \int_{w}
  \bigl(F_{\epsilon}(w-v) - \Theta(w-v) \bigr)\,
  \Phi_{\epsilon}(x-w)\, \Phi_{\epsilon}(y-w) \, \alpha(v)
  \end{align}
  
  We could try to solve equation~(\ref{obstruction for Agraph}) directly, as we did for the earlier generators.  But instead, let us note a few generalities.  The first is that the right-hand side is of the form
  $$     \left[ \d,\,\tikz[baseline=(main.base)]{
    \node[draw,rectangle,inner sep=2pt] (main) {\graphA};
    \draw (main.north) -- ++(0,8pt);
    \draw (main.south) ++(-6pt,0) -- ++(0,-8pt) node[anchor=north] {$\scriptstyle x$};
    \draw (main.south) ++(6pt,0) -- ++(0,-8pt) node[anchor=north] {$\scriptstyle y$};
  }\,\right] 
   \want
   \int_v \mu_\epsilon(x-v,y-v)\,\alpha(v)
   $$
 where
 \begin{multline}\label{eqn.mu}
  \mu_\epsilon(x',y') = 
    - \int_{u'} \Phi_{\epsilon}(x'-u')\,\Phi_{\epsilon}(y'-u')\,
      \int_{w'=u'}^{y'}\Phi_\epsilon(u')\,\Phi_\epsilon(w')
   \\ -   \Phi_{\epsilon}(y')
  \int_{w'}
  \bigl(F_{\epsilon}(w') - \Theta(w') \bigr)\,
  \Phi_{\epsilon}(x'-w')\, \Phi_{\epsilon}(y'-w')
 \end{multline}
  is a compactly-supported two-form on $(-3\epsilon,3\epsilon)^2 \subseteq \bR^2$.  (Here for each variable $s \in \{x,y,w,u\}$ we declare $s' = s-v$.)  The second generality is that a compactly-supported two-form on $\bR^2$ is exact among compactly-supported forms if and only if its integral over $\bR^2$ vanishes.  We may therefore abandon $S^1$ in favor of $\bR$ and ask for $\int_{x'}\int_{y'} \mu(x',y')$.  We calculate in pieces.  
  
  We begin by simplifying the first summand in equation~(\ref{eqn.mu}):
  \begin{align*} 
  \hspace{1in} & \hspace{-1in}  
   - \int_{u'} \Phi_{\epsilon}(x'-u')\,\Phi_{\epsilon}(y'-u')\,
      \int_{w'=u'}^{y'} \, \Phi_\epsilon(u')\,\Phi_\epsilon(w') 
      \\ \notag & = 
       \int_{u'}\Phi_{\epsilon}(x'-u')\,\Phi_{\epsilon}(y'-u')\, \Phi_\epsilon(u')\,
      \int_{w'=u'}^{y'} \, \Phi_\epsilon(w')
      \\ \notag & =
       \int_{u'}\Phi_{\epsilon}(x'-u')\,\Phi_{\epsilon}(y'-u')\, \Phi_\epsilon(u')\, \bigl( F_\epsilon(y') - F_\epsilon(u') \bigr).
  \end{align*}
  We can then substitute $w' = u'$ in the second summand in equation~(\ref{eqn.mu}) and rearrange to conclude:
  \begin{multline*} 
    -   \Phi_{\epsilon}(y')   \int_{w'}  \bigl(F_{\epsilon}(w') - \Theta(w') \bigr)\,  \Phi_{\epsilon}(x'-w')\, \Phi_{\epsilon}(y'-w')
    \\ =  \int_{u'} \Phi_{\epsilon}(x'-u')\, \Phi_{\epsilon}(y'-u')\, \Phi_{\epsilon}(y')\, \bigl(F_{\epsilon}(u') - \Theta(u') \bigr).
  \end{multline*}
  Summing gives
  \begin{align} \label{simplified mu}
    \mu_\epsilon(x',y') & =  \int_{u'} \Phi_{\epsilon}(x'-u')\,\Phi_{\epsilon}(y'-u')\, \Bigl( \Phi_\epsilon(u')\bigl(F_\epsilon(y') - F_\epsilon(u')\bigr) +\Phi_{\epsilon}(y') \bigl(F_{\epsilon}(u') - \Theta(u')\bigr) \Bigr)
    \\ \notag & =  \int_{u'} \Phi_{\epsilon}(x'-u')\, \Phi_{\epsilon}(y'-u')\,\Bigl( \d\bigl(F_\epsilon(u')\,F_\epsilon(y')\bigr) - \frac12 \d\bigl( F_\epsilon(u')^2\bigr) - \Phi_{\epsilon}(y')\,\Theta(u') \Bigr)
  \end{align}
  
  Only the first term in equation~(\ref{simplified mu}) contains an $x'$.  By substituting $x'' = x'-u'$ and then $y'' = y' - u'$, we see that
  \begin{align}  \label{xintegral}
  \hspace{.5in} & \hspace{-.5in}  
    \int_{x'}\int_{y'} \mu_\epsilon(x',y')
     \\ \notag & = 
    \int_{x'}\int_{y'}\int_{u'} \Phi_{\epsilon}(x'-u')\, \Phi_{\epsilon}(y'-u')\,\Bigl( \d\bigl(F_\epsilon(u')\,F_\epsilon(y')\bigr) - \frac12 \d\bigl( F_\epsilon(u')^2\bigr) - \Phi_{\epsilon}(y')\,\Theta(u') \Bigr)
     \\ \notag & = 
    \int_{y'}\int_{u'} \int_{x''} \Phi_\epsilon(x'')\, \Phi_{\epsilon}(y'-u')\,\Bigl( \d\bigl(F_\epsilon(u')\,F_\epsilon(y')\bigr) - \frac12 \d\bigl( F_\epsilon(u')^2\bigr) - \Phi_{\epsilon}(y')\,\Theta(u') \Bigr)
     \\ \notag & = 
    \int_{y'}\int_{u'} \Phi_{\epsilon}(y'-u')\,\Bigl( \d\bigl(F_\epsilon(u')\,F_\epsilon(y')\bigr) - \frac12 \d\bigl( F_\epsilon(u')^2\bigr) - \Phi_{\epsilon}(y')\,\Theta(u') \Bigr)
     \\ \notag & = 
    - \int_{u'} \int_{y'}\Phi_{\epsilon}(y'-u')\,\Bigl( \d\bigl(F_\epsilon(u')\,F_\epsilon(y')\bigr) - \frac12 \d\bigl( F_\epsilon(u')^2\bigr) - \Phi_{\epsilon}(y')\,\Theta(u') \Bigr)
     \\ \notag & = 
    - \int_{u'} \int_{y''}\Phi_{\epsilon}(y'')\,\Bigl( \d\bigl(F_\epsilon(u')\,F_\epsilon(y''+u')\bigr) - \frac12 \d\bigl( F_\epsilon(u')^2\bigr) - \Phi_{\epsilon}(y''+u')\,\Theta(u') \Bigr)
     \\ \notag & = 
    - \int_{y''}\Phi_{\epsilon}(y'')\int_{u'} \Bigl( \d\bigl(F_\epsilon(u')\,F_\epsilon(y''+u')\bigr) - \frac12 \d\bigl( F_\epsilon(u')^2\bigr) - \Phi_{\epsilon}(y''+u')\,\Theta(u') \Bigr).
  \end{align}
  We may now evaluate the three integrals in $u'$.  We have:
  \begin{gather} \label{uintegral1}
     \int_{u'}\d\bigl(F_\epsilon(u')\,F_\epsilon(y''+u')\bigr) = 1. \\ \label{uintegral2}
     -\frac12 \int_{u'} \d\bigl( F_\epsilon(u')^2\bigr) = -\frac12. \\ 
     - \int_{u'} \Phi_{\epsilon}(y''+u')\,\Theta(u') = -\int_{u'=0}^\infty \d F_\epsilon(y''+u') = - \bigl( 1 - F_\epsilon(y'')\bigr). \label{uintegral3}
  \end{gather}
  All together, we find:
  \begin{align*}
    \int_{x'}\int_{y'} \mu_\epsilon(x',y') & = - \int_{y''}\Phi_{\epsilon}(y'') \left( 1 - \frac12 - 1 + F_\epsilon(y'')\right)
    \\ & = \frac12 - \int_{y''} \Phi_{\epsilon}(y'') \,F_\epsilon(y'')
    \\ & = \frac12 - \int_{y''} \frac12 \d\bigl( F_\epsilon(y'')^2\bigr) = 0.
  \end{align*}

  Thus $\mu$ is exact, and so we can imagine choosing  a compactly-supported one-form $\lambda_\epsilon(x',y')$ on $(-3\epsilon,3\epsilon)^2$ with $-\d\lambda = \mu$.  Given a choice of such $\lambda$, we can then set
  $$  \,\tikz[baseline=(main.base)]{
    \node[draw,rectangle,inner sep=2pt] (main) {\graphA};
    \draw (main.north) -- ++(0,8pt);
    \draw (main.south) ++(-6pt,0) -- ++(0,-8pt) node[anchor=north] {$\scriptstyle x$};
    \draw (main.south) ++(6pt,0) -- ++(0,-8pt) node[anchor=north] {$\scriptstyle y$};
  }\,
   =
   \int_v \lambda_\epsilon(x-v,y-v)\,\alpha(v),
   $$
   thereby solving equation~(\ref{obstruction for Agraph}).
   
   There is one last generator of $\sh^{\pr}\Frob_1$ whose obstruction must be exact in order to construct a homomorphism to $\qloc$, namely $\graphBbox$.  To complete the proof of Theorem~\ref{mainthm.properads}, we must show that in fact this last obstruction is \emph{not} exact.  Recall that in equations~(\ref{eqn.assoc=0}) and~(\ref{expression for Dgraphbox}) we were able to choose certain generators of $\sh^{\pr}\Frob_1$ to vanish identically.  Then equation~(\ref{Bderivative}) says that the last obstruction is
   $$ \left[ \d, \,\tikz[baseline=(main.base)]{
    \node[draw,rectangle,inner sep=2pt] (main) {
      \graphB
    };
    \draw (main.south) -- ++(0,-8pt) node[anchor=north] {$\scriptstyle x$};
    \draw (main.north) -- ++(0,8pt);
  }\, \right] \want - \,\tikz[baseline=(outerbasepoint)]{
    \node[draw,rectangle,inner sep=2pt] (main) {\graphA};
    \draw (main.north) -- ++(0,8pt) coordinate (top);
    \draw (main.south) ++(-6pt,0) -- ++(0,-10pt);
    \draw (main.south) ++(6pt,0) -- ++(0,-10pt);
    \draw (main.south) ++(0,-18pt) node[draw,inner sep=2pt,fill=white] (lowerbox) {\mult};
    \draw (lowerbox.south) -- ++(0,-10pt) coordinate (bottom) node[anchor=north] {$\scriptstyle x$};
    \path (bottom) -- coordinate (outerbasepoint) (top);
  }\, = -\int_v \lambda_\epsilon(x-v,x-v)\,\alpha(v). \vspace{-24pt}$$
  As for $\graphAbox$, to decide whether this obstruction is exact it suffices to compute $\int_{x'} \lambda_\epsilon(x',x')$.  But Stokes' theorem allows us to turn this into a question about $\mu = \d \lambda$:
  $$ -\int_{x'} \lambda_\epsilon(x',x') = \int_{x'} \int_{y'} \Theta(y'-x') \,\d\lambda_\epsilon(x',y') = \int_{x'} \int_{y'} \Theta(y'-x') \,\mu_\epsilon(x',y'). $$
  Note that $\mu$ is compactly-supported, and so the only boundary contribution to the integral $\int_{x'} \int_{y'} \Theta(y'-x') = \iint_{y' \geq x'}$ comes from the line $y' = x'$.
  
  From equation~(\ref{simplified mu}), we find that we want to compute:
  \begin{align*} 
  \hspace{.5in} & \hspace{-.5in}  
   \int_{x'} \int_{y'} \Theta(y'-x') \,\mu_\epsilon(x',y')
   \\ & =
   \int_{x'} \int_{y'} \Theta(y'-x') \int_{u'} \Phi_{\epsilon}(x'-u') \Phi_{\epsilon}(y'-u')\Bigl( \d\bigl(F_\epsilon(u')F_\epsilon(y')\bigr) - \frac12 \d\bigl( F_\epsilon(u')^2\bigr) - \Phi_{\epsilon}(y')\Theta(u') \Bigr).
\intertext{We will proceed as in equation~(\ref{xintegral}), first rearranging and substituting $x'' = x'-u'$ and $y'' = y'-u'$, and then computing the integral in $x''$:}
    & = 
   -\int_{u'}  \int_{y'} \int_{x'}\Theta(y'-x')  \Phi_{\epsilon}(x'-u') \Phi_{\epsilon}(y'-u')\Bigl( \d\bigl(F_\epsilon(u')F_\epsilon(y')\bigr) - \frac12 \d\bigl( F_\epsilon(u')^2\bigr) - \Phi_{\epsilon}(y')\Theta(u') \Bigr)
   \\ & = 
   -\int_{u'}  \int_{y''} \int_{x''}\Theta(y''-x'')  \Phi_{\epsilon}(x'') \Phi_{\epsilon}(y'')\Bigl( \d\bigl(F_\epsilon(u')F_\epsilon(y''+u')\bigr) - \frac12 \d\bigl( F_\epsilon(u')^2\bigr) - \Phi_{\epsilon}(y''+u')\Theta(u') \Bigr)
   \\ & = 
   -\int_{u'}  \int_{y''} F_\epsilon(y'')\, \Phi_{\epsilon}(y'')\Bigl( \d\bigl(F_\epsilon(u')F_\epsilon(y''+u')\bigr) - \frac12 \d\bigl( F_\epsilon(u')^2\bigr) - \Phi_{\epsilon}(y''+u')\Theta(u') \Bigr)
   \\ & = 
   - \int_{y''} F_\epsilon(y'')\, \Phi_{\epsilon}(y'')\int_{u'} \Bigl( \d\bigl(F_\epsilon(u')F_\epsilon(y''+u')\bigr) - \frac12 \d\bigl( F_\epsilon(u')^2\bigr) - \Phi_{\epsilon}(y''+u')\Theta(u') \Bigr).
\intertext{From equations~(\ref{uintegral1}), (\ref{uintegral2}), and~(\ref{uintegral3}), we find:}
    & = 
   - \int_{y''} F_\epsilon(y'')\, \Phi_{\epsilon}(y'') \left( 1 - \frac12 - 1 + F_\epsilon(y'')\right)
   \\ & = 
   \frac12 \int_{y''} F_\epsilon(y'')\, \Phi_{\epsilon}(y'') - \int_{y''} F_\epsilon(y'')^2\, \Phi_{\epsilon}(y'')
   \\ & =
   \frac14 \int_{y''} \d \bigl( F_\epsilon(y'')^2\bigr) - \frac13 \int_{y''} \d \bigl( F_\epsilon(y'')^3\bigr) 
   \\ & =
   \frac14 - \frac13 = -\frac1{12}.
  \end{align*}
  Since this is not zero,  $\lambda$ is not exact, and the proof is complete.  
\end{proof}

The above computations show that the action of $\partial \left( \graphBbox\right)$ on $\H^\bullet(S^1)$ is precisely multiplication by $-\frac1{12}$.  This fails to be exact whether it is considered as a quasilocal operation or not.
It is worth pointing out, then,  the following  corollary:
\begin{corollary}
  Any properadic lift to the cochain level of the 1-shifted Frobenius algebra structure on $\H^\bullet(S^1)$ must necessarily include a non-quasilocal action of one of the generators 
  
  \mbox{}\hfill $ \,\tikz[baseline=(main.base)]{
    \node[draw,rectangle,inner sep=2pt] (main) {\mult};
    \draw (main.south) -- ++(0,-8pt);
    \draw (main.north) ++(-6pt,0) -- ++(0,8pt);
    \draw (main.north) ++(6pt,0) -- ++(0,8pt);
  }\,, \hspace{1ex} 
  \,\tikz[baseline=(main.base)]{
    \node[draw,rectangle,inner sep=2pt] (main) {\comult};
    \draw (main.north) -- ++(0,8pt);
    \draw (main.south) ++(-6pt,0) -- ++(0,-8pt);
    \draw (main.south) ++(6pt,0) -- ++(0,-8pt);
  }\,, \hspace{1ex}
  \,\tikz[baseline=(main.base)]{
    \node[draw,rectangle,inner sep=2pt] (main) {$
        \,\tikz[baseline=(basepoint)]{ 
    \path (0,3pt) coordinate (basepoint) (0,1pt) node[dot] {} (-8pt,11pt) node[dot] {};
    \draw[](0,-9pt) -- (0,1pt);
    \draw[](0,1pt) -- (-8pt,11pt);
    \draw[](-8pt,11pt) -- (-12pt,21pt);
    \draw[](-8pt,11pt) -- (0pt,21pt);
    \draw[](0,1pt) -- (12pt,21pt);
  }\,
  -
  \,\tikz[baseline=(basepoint)]{ 
    \path (0,3pt) coordinate (basepoint) (0,1pt) node[dot] {} (8pt,11pt) node[dot] {};
    \draw[](0,-9pt) -- (0,1pt);
    \draw[](0,1pt) -- (8pt,11pt);
    \draw[](0,1pt) -- (-12pt,21pt);
    \draw[](8pt,11pt) -- (0pt,21pt);
    \draw[](8pt,11pt) -- (12pt,21pt);
  }\,
    $};
    \draw (main.south) -- ++(0,-8pt);
    \draw (main.north) ++(-12pt,0) -- ++(0,8pt);
    \draw (main.north) ++(0pt,0) -- ++(0,8pt);
    \draw (main.north) ++(12pt,0) -- ++(0,8pt);
  }\,
  ,\hspace{1ex}
  \,\tikz[baseline=(main.base)]{
    \node[draw,rectangle,inner sep=2pt] (main) {$
    -  \,\tikz[baseline=(basepoint)]{ 
    \path (0,3pt) coordinate (basepoint) (8pt,1pt) node[dot] {} (0,11pt) node[dot] {};
    \draw[](-12pt,-9pt) -- (0,11pt);
    \draw[](0pt,-9pt) -- (8pt,1pt);
    \draw[](12pt,-9pt) -- (8pt,1pt);
    \draw[](8pt,1pt) -- (0,11pt);
    \draw[](0,11pt) -- (0,21pt);
  }\,
  -
  \,\tikz[baseline=(basepoint)]{ 
    \path (0,3pt) coordinate (basepoint) (-8pt,1pt) node[dot] {} (0,11pt) node[dot] {};
    \draw[](12pt,-9pt) -- (0,11pt);
    \draw[](-12pt,-9pt) -- (-8pt,1pt);
    \draw[](0pt,-9pt) -- (-8pt,1pt);
    \draw[](-8pt,1pt) -- (0,11pt);
    \draw[](0,11pt) -- (0,21pt);
  }\,
    $};
    \draw (main.north) -- ++(0,8pt);
    \draw (main.south) ++(-12pt,0) -- ++(0,-8pt);
    \draw (main.south) ++(0pt,0) -- ++(0,-8pt);
    \draw (main.south) ++(12pt,0) -- ++(0,-8pt);
  }\,
  ,\hspace{1ex}
  \,\tikz[baseline=(main.base)]{
    \node[draw,rectangle,inner sep=2pt] (main) {$
          \,\tikz[baseline=(basepoint)]{ 
    \path (0,3pt) coordinate (basepoint) (0,1pt) node[dot] {} (0,11pt) node[dot] {};
    \draw[](-8pt,-9pt) -- (0,1pt);
    \draw[](8pt,-9pt) -- (0,1pt);
    \draw[](0,1pt) -- (0,11pt);
    \draw[](0,11pt) -- (-8pt,21pt);
    \draw[](0,11pt) -- (8pt,21pt);
  }\,
  +
  \,\tikz[baseline=(basepoint)]{ 
    \path (0,3pt) coordinate (basepoint) (8pt,1pt) node[dot] {} (-8pt,11pt) node[dot] {};
    \draw[](-8pt,-9pt) -- (-8pt,11pt);
    \draw[](8pt,-9pt) -- (8pt,1pt);
    \draw[](8pt,1pt) -- (-8pt,11pt);
    \draw[](-8pt,11pt) -- (-8pt,21pt);
    \draw[](8pt,1pt) -- (8pt,21pt);
  }\,
    $};
    \draw (main.south) ++(-6pt,0) -- ++(0pt,-8pt);
    \draw (main.south) ++(6pt,0) -- ++(0pt,-8pt);
    \draw (main.north) ++(-6pt,0) -- ++(0,8pt);
    \draw (main.north) ++(6pt,0) -- ++(0,8pt);
  }\,
  ,\hspace{1ex}
  \graphDbox
  ,\hspace{1ex}
  \text{or}\hspace{1ex}
  \graphAbox
  .
  $ \qedhere
  \label{a corollary}
\end{corollary}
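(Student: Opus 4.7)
My plan is to argue by contradiction. Suppose $\eta \colon \sh^{\pr}\Frob_1 \to \End(\Omega^\bullet(S^1))$ is a homomorphism lifting the standard 1-shifted Frobenius algebra structure on $\H^\bullet(S^1)$ in which each of the seven listed generators is sent to a quasilocal operation. I will derive a contradiction by examining the element $\eta(\partial \graphBbox) \in \End(\Omega^\bullet(S^1))(1,1)^0$, which by formula~(\ref{Bderivative}) is a specific composition of the seven quasilocal generators and is therefore itself quasilocal and $[\d,-]$-closed.

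Two properties of this element will then be placed in contradiction. On the one hand, because $\partial \graphBbox$ is $\partial$-exact in $\sh^{\pr}\Frob_1$ and $\eta$ is a chain map, $\eta(\partial \graphBbox) = [\d, \eta(\graphBbox)]$ must be $[\d,-]$-exact in $\End(\Omega^\bullet(S^1))$, and consequently induces the zero map on $\H^\bullet(S^1)$. On the other hand, the explicit computation at the end of the proof of Theorem~\ref{mainthm.properads}, culminating in $\int_{x'} \lambda_\epsilon(x', x') = \tfrac{1}{12}$, evaluates this induced action to be multiplication by $-\tfrac{1}{12}$, which is nonzero. The contradiction forces at least one of the seven listed generators to act non-quasilocally in any actual lift.

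The main obstacle, which is the step that requires the most care, is to verify that the numerical value $-\tfrac{1}{12}$ does not depend on the particular quasilocal representatives of the seven generators fixed in the proof of Theorem~\ref{mainthm.properads}. By the basic facts of obstruction theory recalled at the start of Section~\ref{section.mainthm1} together with the cohomology calculation in Proposition~\ref{prop.Hqloc}, any alternative quasilocal representative for one of the seven generators differs from the one fixed in that proof by a $[\d,-]$-closed quasilocal operation whose cohomology class is constrained by the requirement that $\eta$ lift the standard Frobenius action on $\H^\bullet(S^1)$. Such modifications alter $\eta(\partial \graphBbox)$ only by a $[\d,-]$-exact operation, and $[\d,-]$-exact operations induce the zero map on $\H^\bullet(S^1)$. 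Hence the induced action of $\eta(\partial \graphBbox)$ on $\H^\bullet(S^1)$ is a well-defined invariant of any quasilocal lift of the seven generators, equal to the value $-\tfrac{1}{12}$ produced by the two topological integrals $\int \Phi_\epsilon F_\epsilon = \tfrac{1}{2}$ and $\int \Phi_\epsilon F_\epsilon^2 = \tfrac{1}{3}$ in the proof of Theorem~\ref{mainthm.properads}. With this choice-independence in place, the contradiction between the chain-map identity and the nonzero value $-\tfrac{1}{12}$ completes the proof.
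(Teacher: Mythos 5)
Your proposal is correct and takes essentially the same approach as the paper: the paper's proof is the preceding remark that the class of $\eta(\partial\graphBbox)$ acts on $\H^\bullet(S^1)$ as $-\frac1{12}\cdot\mathrm{id}$ and hence is not exact, while the chain-map identity $\eta(\partial\graphBbox)=[\d,\eta(\graphBbox)]$ forces it to be exact --- exactly your contradiction. Your careful discussion of choice-independence is welcome but redundant: the second basic fact of obstruction theory, already invoked in the proof of Theorem~\ref{mainthm.properads} (together with Proposition~\ref{prop.Hqloc} showing $\H^{\deg(f)}(\qloc)$ is either zero or pinned by the $\Frob_1$-action on $\H^\bullet(S^1)$), guarantees that the cohomology class of the obstruction is independent of the particular quasilocal choices, so any quasilocal lift of the seven earlier generators produces the same $-\frac1{12}$.
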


\section{Other cochain models}  \label{section.conclusion}

There are undoubtedly some readers who prefer combinatorics to calculus and who therefore might wonder if Theorems~\ref{mainthm.dioperads} and~\ref{mainthm.properads} are special to the choice of de Rham forms as the model for cochains.  Do the same results hold if $\Omega^\bullet(S^1)$ is replaced by, say, cellular cochains for a fine subdivision of $S^1$?  The answer in a technical sense is ``no,'' because it is difficult to define a properad of quasilocal operations in  such a model.  But this is really the only problem.  The goal of this section is to illustrate what happens for cellular cochains, while being a bit \emph{ad hoc} about ``quasilocality.''

Consider then subdividing $S^{1}$ into some large number $N \gg 0$ of cells.  We can coordinatize $S^{1}$ by identifying the vertices with $\bZ/N\bZ$; the intervals connecting them then correspond to the set $\frac12 + \bZ/N\bZ$.  Let us write $\C^{\bullet}(S^{1})$ for the cellular cochains for this cell decomposition.  It has a basis consisting of the functions $f_{x}, x\in \bZ/N\bZ$ and the 1-cochains $g_{x+\frac12}, x\in \bZ/N\bZ$, where $f_{x}$ is non-zero only at vertex $x$, and $g_{x+\frac12}$ is non-zero only at the edge connecting vertex $x$ with vertex $x+1$.  The 1-cochains $g_{x+\frac12}$ are, of course, all closed.  The derivatives of the $0$-cochains are:
$$ \d f_{x} = g_{x-\frac12} - g_{x+\frac12}. $$
The data of an operation in $\End(\C^{\bullet}(S^{1}))(m,n)$ then consists of a matrix whose entries are indexed by the abelian group $(\frac12 \bZ/N\bZ)^{\times(m+n)}$.

The set of vertices $\bZ/N\bZ$ has a natural metric, in which vertex $x$ and vertex $x+1$ are at distance~$1$ from each other.  Let us extend this to a non-symmetric ``metric'' on the set of basic cochains $\{f_{x}\}_{x\in \bZ/N\bZ} \cup \{g_{y}\}_{y\in \frac12 + \bZ/N\bZ}$ be declaring that the ``distance'' from $f_{x}$ to $g_{x\pm \frac12}$ is $0$, but the distance from $g_{x\pm \frac12}$ to $f_{x}$ is $1$.  The idea of this ``metric'' is that the distance from a cochain $a$ to a cochain $b$ is $\max_{x\in a}\min_{y\in b}|y-x|$.  The symmetrization of this ``metric'' is the usual metric on $\frac12\bZ/N\bZ$.

\begin{definition} \label{defn.ell-quasilocal}
  For a ``length scale'' $\ell \in \bN$, say that an $m$-to-$n$ operation $P \in \End(\C^{\bullet}(S^{1}))(m,n)$ is \define{$\ell$-quasilocal} if it enjoys the following property.  Suppose that for $\vec x \in (\frac12 \bZ / N\bZ)^{\times m} $ and $\vec y \in (\frac12 \bZ / N\bZ)^{\times n} $, the $(\vec x,\vec y)$th matrix entry of $P$ is non-zero.  Then for every $i \in\{1,\dots,m\}$ and every $j \in \{1,\dots,n\}$, the ``distance'' for the above non-symmetric metric from $x_{i}$ to $y_{j}$ is at most $\ell$.
\end{definition}

For example, every operation in $\End(\C^{\bullet}(S^{1}))$ is $N$-quasilocal, and any properadic composition of an $\ell$-quasilocal operation with an $\ell'$-quasilocal operation is $(\ell + \ell'+1)$-quasilocal.  ({Operadic} compositions, in which every operation has only one output, do not require the ``$+1$,'' but in general properadic compositions do.  PROPic compositions, in which graphs need not be connected, can completely destroy quasilocality as we have defined it.)  The point of using the non-symmetric version of ``distance'' in Definition~\ref{defn.ell-quasilocal} is that differentiation $\d$ is $0$-quasilocal, and that the set $\qloc_{\ell}(m,n)$ of $\ell$-quasilocal $m$-to-$n$ operations is a cochain complex, and in fact a sub-$\bS$-bimodule of $\End(\C^{\bullet}(S^{1}))(m,n)$.

An argument analogous to Proposition~\ref{prop.Hqloc} implies:
\begin{proposition}\label{prop.Hqloc-discrete}
  Suppose that $\ell \ll N$.  Then $\H^{\bullet}\qloc_{\ell}(m,n) \cong \H^{\bullet}(S^{1})[1-n]$, and the inclusion $\qloc_{\ell}(m,n) \hookrightarrow \qloc_{\ell+1}(m,n)$ is a quasiisomorphism. \qedhere
\end{proposition}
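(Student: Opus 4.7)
The plan is to mimic the proof of Proposition~\ref{prop.Hqloc}, with cellular cochains in place of smooth de Rham forms. The first step is to identify $\End(\C^\bullet(S^1))(m,n)$ with a complex of cellular cochains on $(S^1)^{\times(m+n)}$ shifted by $[m]$, using cellular Poincar\'e duality on each of the $m$ input factors to produce the shift. The non-symmetric metric of Definition~\ref{defn.ell-quasilocal} is arranged precisely so that, under this identification, $\qloc_\ell(m,n)$ corresponds to the subcomplex of cochains supported on the ``$\ell$-neighborhood'' $U_\ell$ of the diagonal $\Delta \cong S^1$ inside $(S^1)^{\times(m+n)}$. Part of this step is to verify that the asymmetry of the metric (distance $0$ from vertices to adjacent edges, but distance $1$ in the reverse direction) is exactly what is needed so that $\d$ on $\End(\C^\bullet(S^1))(m,n)$ preserves $\qloc_\ell$; this is essentially the observation, already noted in the excerpt, that $\d$ is $0$-quasilocal.

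The second step is a discrete Thom isomorphism. Provided $\ell \ll N$, the subset $U_\ell$ is a cellular tubular neighborhood of $\Delta$ of codimension $m+n-1$, whose fibers are combinatorial models for closed balls of dimension $m+n-1$. The cochains supported on $U_\ell$ play the role of the cellular analogue of compactly-supported-in-fibers forms, and a discrete Thom deformation retraction onto $\Delta$ shows that this complex is quasiisomorphic to $\C^\bullet(S^1)[1-m-n]$. Combining with the $[m]$ shift from the previous step yields $\H^\bullet\qloc_\ell(m,n) \cong \H^\bullet(S^1)[1-n]$, as desired.

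For the inclusion $\qloc_\ell(m,n) \hookrightarrow \qloc_{\ell+1}(m,n)$, I would construct the Thom retractions compatibly: a single retraction from $U_{\ell+1}$ onto $\Delta$ whose restriction to $\qloc_\ell$ is a retraction of $U_\ell$ onto $\Delta$. The two retractions then identify both $\qloc_\ell$ and $\qloc_{\ell+1}$ with the same complex $\C^\bullet(S^1)[1-n]$ via maps that intertwine the inclusion with the identity on $\C^\bullet(S^1)[1-n]$, whence the inclusion is a quasiisomorphism.

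The main obstacle will be constructing the discrete Thom retraction explicitly. Unlike the smooth case, where one simply rescales in the normal direction, the cellular fiber of $\Delta$ in $U_\ell$ is a product of triangulated intervals whose intersection with the quasilocality cutoff has an irregular combinatorial shape, determined by the non-symmetric metric that treats inputs versus outputs and $0$-cells versus $1$-cells asymmetrically. Writing down an explicit contracting chain homotopy that respects this combinatorial structure, and doing so compatibly with the filtration by $\ell$ so that the retractions of $U_\ell$ and $U_{\ell+1}$ fit into a commuting square, is the main bookkeeping task, although an acyclic models or prism-operator argument should ultimately suffice.
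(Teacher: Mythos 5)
Your proposal is essentially correct and matches the argument the paper intends (the paper itself only gestures at the analogy with Proposition~\ref{prop.Hqloc} and does not spell out the details). You rightly drop the $\bR_{>0}$-direction step from the de Rham proof, since the discrete $\qloc_\ell$ carries no $\epsilon$-parameter, and the remaining skeleton --- cellular Poincar\'e duality to produce the $[m]$-shift, a discrete Thom isomorphism onto $\C^\bullet(S^1)[1-m-n]$, and compatible retractions for the inclusions --- is the right one.

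One small point to tighten: the cellular Poincar\'e duality $\C_\bullet(S^1) \cong \C^\bullet(S^1)[1]$ uses the dual cell decomposition, which for this cell structure is a rotation by $\frac12$; after this rotation, the translation between ``matrix entry supported within $\ell$'' (Definition~\ref{defn.ell-quasilocal}) and ``cochain on $(S^1)^{\times(m+n)}$ supported in $U_\ell$'' holds only up to an $O(1)$ discrepancy, not ``precisely.'' This does not affect the statement, since it is asymptotic in $\ell \ll N$ and the eventual inclusions $\qloc_\ell \hookrightarrow \qloc_{\ell+1}$ are quasi-isomorphisms anyway, but the word ``precisely'' in your second paragraph should be weakened to ``up to a uniformly bounded error'' to make the identification honest. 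Your candor about the combinatorial bookkeeping for the discrete Thom retraction is appropriate; that is indeed where the work lives.
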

At some cutoff depending on $m,n,N$, eventually $\ell$ becomes too large and Proposition~\ref{prop.Hqloc-discrete} fails.  The cutoff grows linearly with $N$.

Since the length scale at which an operation is quasilocal grows under compositions, there is no reasonable properad of ``$(\ll N)$-quasilocal operations.''  But we can nevertheless ask for homomorphisms $\sh^{\di}\Frob_{1} \to \End(\C^{\bullet}(S^{1}))$ or $\sh^{\pr}\Frob_{1} \to \End(\C^{\bullet}(S^{1}))$ lifting the action on cohomology in which the ``first few'' generators are $\ell$-quasilocal for $\ell \ll N$.  An argument analogous to the proof of Theorem~\ref{mainthm.dioperads} shows that for $\sh^{\di}\Frob_{1}$, such a request can be satisfied, and that any finite number of generators desired to be among these ``first few'' is allowed provided  $N$ is large enough. 
However, corresponding to Theorem~\ref{mainthm.properads}, we have:

\begin{theorem}  \label{discrete thm.properad}
There does not exist an action of $\sh^{\pr}\Frob_{1}$ on $\C^{\bullet}(S^{1})$ in which the ``first few'' generators act $\ell$-quasilocally for $\ell \ll N$ if the ``first few'' includes $\graphBbox$\vspace{-21pt} (as well as any generators appearing in the derivative of any of the ``first few'').
   Indeed, the obstruction to defining $\graphBbox$\vspace{-15pt} will act as $-\frac1{12}\id$ on $\H^{\bullet}(S^{1})$.
\end{theorem}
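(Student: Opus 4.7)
The plan is to mirror the proof of Theorem~\ref{mainthm.properads} essentially step by step, replacing smooth de~Rham forms and their integrals with cellular cochains and their combinatorial sums, and replacing the smoothing parameter $\epsilon \in \bR_{>0}$ with a discrete width parameter $L \in \bN$ chosen to satisfy $L \ll N$. Since Proposition~\ref{prop.Hqloc-discrete} guarantees that $\H^{\bullet}\qloc_\ell$ has the same dimensions as in the smooth case (for $\ell \ll N$), the obstruction-theoretic analysis identifies the same short list of generators of $\sh^{\pr}\Frob_1$ as the only possible sources of obstructions, in the same cohomological degrees.

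First I would build discrete analogues of $\Phi_\epsilon$, $F_\epsilon$, and $\Theta$. For a width $L\ll N$, take $\Phi = \frac{1}{2L}\sum_{y\in \frac12 + \bZ,\, |y|<L} g_y \in \C^1(S^1)$, a closed 1-cochain supported in width $2L$ around $0 \in \bZ/N\bZ$ and representing the fundamental class; let $F$ be a discrete primitive of the difference between $\Phi$ and a uniform 1-cochain, supported in the same neighborhood of $0$; and let $\Theta$ be the usual Heaviside 0-cochain on $S^1$. With these in hand, I would choose $\mult$ to be the (suitably symmetrized) cup product, $\comult$ to be the direct discrete analogue of the formula built from $\Phi_\epsilon$ in the smooth proof, and the Frobenius, coassociativity, and genus-one homotopies via the discrete analogues of equations~(\ref{eqn.answer for frobeniator}), (\ref{half of the coassociator}), and~(\ref{expression for Dgraphbox}).

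The main technical obstacle is quasilocality bookkeeping: properadic composition in the discrete setting increases the quasilocality scale by~$1$, so one must verify that all generators built during the induction remain $\ell$-quasilocal for some $\ell$ depending only on $L$, not on $N$. Once $L$ is fixed small relative to $N$ yet large enough that the discrete $\Phi$ and $F$ behave well, the vanishing calculations carry over from the smooth proof: products like $\Phi \cup \Phi$ vanish automatically for dimensional reasons (there are no 2-cochains on $S^1$), and the symmetry-plus-exactness arguments that killed the coassociativity and Frobenius obstructions translate verbatim.

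The final computation, showing that the obstruction associated to $\graphBbox$ acts on $\H^{\bullet}(S^1)$ by $-\frac{1}{12}\id$, reduces by the same Stokes-type manipulation used in the smooth case to the discrete sum
\[
-\!\!\sum_{y''\in \frac12 \bZ/N\bZ}\!\! F(y'')^2\,\Phi(y'') \;+\; \frac12 \!\!\sum_{y''\in \frac12\bZ/N\bZ}\!\! F(y'')\,\Phi(y'').
\]
By discrete Abel summation — the combinatorial analogue of the identity $k F^{k-1}\,\d F = \d(F^k)$ applied to the chosen $F$ — these sums evaluate to $\frac13$ and $\frac12$ respectively, giving $\frac14 - \frac13 = -\frac{1}{12}$, exactly as in Theorem~\ref{mainthm.properads}. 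Since this is nonzero, the obstruction is not quasilocally exact, and no quasilocal extension of the ``first few'' generators to include $\graphBbox$ can exist.
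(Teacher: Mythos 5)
Your proposal and the paper's proof diverge fundamentally, and the divergence exposes a genuine gap in your approach.  You plan to transplant the de~Rham formulas directly, using a discrete smoothing kernel $\Phi$ of width $L$, but two things break.  First, the strict associativity of $\wedge$ was essential in the smooth proof (it let one set the associativity homotopy and $\graphDbox$ to zero identically, which dramatically simplifies the $\graphAbox$ and $\graphBbox$ computations); no commutative lift of $\mult$ to cellular cochains is strictly associative (the un-symmetrized cup product is associative but not $\bS_2$-equivariant, and symmetrizing destroys associativity), so the associativity homotopy \emph{must} be nonzero and feeds nontrivially into every later obstruction.  Your proposal does not account for these contributions at all.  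Second, the claim that the discrete sums $\sum F^2\Phi$ and $\sum F\Phi$ ``evaluate to $\frac13$ and $\frac12$'' via ``discrete Abel summation — the combinatorial analogue of $kF^{k-1}\,\d F = \d(F^k)$'' is simply false: $F(y{+}1)^k - F(y)^k \neq k F(y)^{k-1}\bigl(F(y{+}1)-F(y)\bigr)$ for $k\geq 2$, so those sums are Riemann-type approximations carrying error terms of order $1/L$, not the exact values.  Since the obstruction class is a well-defined real number (once the cohomology-level action is prescribed), a computation that visibly returns $-\frac1{12}+O(1/L)$ is either missing terms (very likely from the associativity homotopy you dropped) or outright wrong, and does not establish the exact value claimed in the theorem.

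The paper's proof instead makes the simplest possible $0$-quasilocal choices: $\mult$ is the averaged cup product $f_x\otimes g_{x\pm\frac12}\mapsto \frac12 g_{x\pm\frac12}$, $\comult$ is a short explicit combinatorial formula, and then one computes the associativity, coassociativity, and Frobenius homotopies by hand (they have coefficients $\frac1{12}$, $\frac16$, $\frac14$ and the like), solves for $\graphDbox$ and $\graphAbox$ explicitly, and reads off that $\bigl[\d,\graphBbox\bigr]$ wants to be exactly $-\frac1{12}\id$.  This finite, exact bookkeeping is what actually yields the clean answer.  If you wish to salvage your approach, you would need to carry the discrete associativity homotopy through every subsequent equation and show that its contribution exactly cancels the Riemann-sum error terms, which is considerably more work than the paper's direct combinatorics and not something your proposal addresses.
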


Our proof of Theorem~\ref{discrete thm.properad} will follow the same overall logic as our proof of Theorem~\ref{mainthm.properads}, although the calculations will be different.

  Together with Corollary~\ref{a corollary}, we find the following general result about cochain-level Frobenius-algebra structures.  (We call it a ``conclusion,'' and provide a ``justification,'' because we will not try to make it completely precise.)
\begin{metacorollary}\label{meta}
  For any ``geometric'' cochain model on $S^{1}$, every homotopy Frobenius-algebra structure, even in the dioperadic sense, must include a nontrivial homotopy controlling at least one of associativity, coassociativity, or the Frobenius axiom.
\end{metacorollary}
\begin{proof}[Justification]
  By a ``geometric'' cochain model, we mean at minimum that it should admit some sort of notion of ``quasilocality'' satisfying a version of Propositions~\ref{prop.Hqloc} and~\ref{prop.Hqloc-discrete}.  If so, one can ask whether an $\sh^{\pr}\Frob_{1}$ structure exists in which the generators up to $\graphBbox$\vspace{-21pt} are quasilocal, and the answer will be ``no.''  In particular, any quasilocal choices for the generators before $\graphBbox$\vspace{-21pt} leads to the same non-zero obstruction for $\graphBbox$, by some zig-zag of cochain models 
  comparing the chosen one with  de Rham forms or cellular cochains.
  
  But suppose that one can choose all three homotopies controlling associativity, coassociativity, and the Frobenius axiom to vanish identically.  Then the obstructions for $\graphDbox$\vspace{-18pt} and $\graphAbox$ vanish identically, and so we are allowed to set $\graphDbox$\vspace{-21pt} and $\graphAbox$ both identically zero, in which case the obstruction for $\graphBbox$ vanishes.
\end{proof}

Similar results in arbitrary dimensions are suggested by~\cite[Conjecture 3.5]{PoissonAKSZ}, which predicts that,
although quasilocal $\sh^{\pr}\Frob_{d}$ structures exist at cochain level on $(d>1)$-dimensional manifolds,
there is no \emph{canonical} such structure.
But a canonical structure would exist if the first three homotopies could be made to vanish.

\begin{proof}[Proof of Theorem~\ref{discrete thm.properad}]
  The cohomological degree arguments from the proof of Theorem~\ref{mainthm.properads} carry through verbatim, and imply that as long as we stay in the $(\ll N)$-quasilocal regime, no choices will affect whether later obstructions vanish.  We need therefore only to choose  solutions to some inductive system of equations, and calculate some obstructions for that particular system of choices.

  Our convention will be to write only the non-zero matrix coefficients of operations.  We begin by choosing $0$-quasilocal lifts of the multiplication and comultiplication:
  \begin{align*}
    \,\tikz[baseline=(main.base)]{
    \node[draw,rectangle,inner sep=2pt] (main) {\mult};
    \draw (main.south) -- ++(0,-8pt);
    \draw (main.north) ++(-6pt,0) -- ++(0,8pt);
    \draw (main.north) ++(6pt,0) -- ++(0,8pt);
  }\,
  & : \quad f_{x}\otimes f_{x}\mapsto f_{x},\quad f_{x}\otimes g_{x\pm \frac12} \mapsto \frac12g_{x\pm \frac12}, \quad g_{x\pm \frac12} \otimes f_{x} \mapsto \frac12g_{x\pm \frac12},
  \\
  \,\tikz[baseline=(main.base)]{
    \node[draw,rectangle,inner sep=2pt] (main) {\comult};
    \draw (main.north) -- ++(0,8pt);
    \draw (main.south) ++(-6pt,0) -- ++(0,-8pt);
    \draw (main.south) ++(6pt,0) -- ++(0,-8pt);
  }\,
  & : \quad f_{x}\mapsto \frac12(g_{x-\frac12} + g_{x + \frac12})\otimes f_{x} - f_{x}\otimes \frac12(g_{x-\frac12} + g_{x + \frac12}), \quad g_{x+\frac12} \mapsto g_{x+\frac12} \otimes g_{x+\frac12}.
  \end{align*}
  
  The homotopy controlling associativity should then satisfy:
  \begin{equation*}
  \left[\d,
  \,\tikz[baseline=(main.base)]{
    \node[draw,rectangle,inner sep=2pt] (main) {$
        \,\tikz[baseline=(basepoint)]{ 
    \path (0,3pt) coordinate (basepoint) (0,1pt) node[dot] {} (-8pt,11pt) node[dot] {};
    \draw[](0,-9pt) -- (0,1pt);
    \draw[](0,1pt) -- (-8pt,11pt);
    \draw[](-8pt,11pt) -- (-12pt,21pt);
    \draw[](-8pt,11pt) -- (0pt,21pt);
    \draw[](0,1pt) -- (12pt,21pt);
  }\,
  -
  \,\tikz[baseline=(basepoint)]{ 
    \path (0,3pt) coordinate (basepoint) (0,1pt) node[dot] {} (8pt,11pt) node[dot] {};
    \draw[](0,-9pt) -- (0,1pt);
    \draw[](0,1pt) -- (8pt,11pt);
    \draw[](0,1pt) -- (-12pt,21pt);
    \draw[](8pt,11pt) -- (0pt,21pt);
    \draw[](8pt,11pt) -- (12pt,21pt);
  }\,
    $};
    \draw (main.south) -- ++(0,-8pt);
    \draw (main.north) ++(-12pt,0) -- ++(0,8pt);
    \draw (main.north) ++(0pt,0) -- ++(0,8pt);
    \draw (main.north) ++(12pt,0) -- ++(0,8pt);
  }\,
  \right]
  \dwant  \quad \begin{cases} \displaystyle
  g_{x\pm \frac12} \otimes f_{x} \otimes f_{x} \mapsto \frac14g_{x\pm\frac12}, & \displaystyle \quad f_{x} \otimes f_{x} \otimes g_{x\pm \frac12} \mapsto -\frac14g_{x\pm\frac12},
  \\[12pt] \displaystyle
  g_{x\pm \frac12} \otimes f_{x} \otimes f_{x\pm1} \mapsto -\frac14g_{x\pm\frac12}, & \displaystyle \quad f_{x}\otimes f_{x\pm 1}\otimes g_{x\pm \frac12} \mapsto \frac14g_{x\pm \frac12}. \end{cases}
  \end{equation*}
  Indeed, when all inputs are an ``$f$,'' $\,\tikz[baseline=(main.base)]{
    \node[draw,rectangle,inner sep=2pt] (main) {\mult};
    \draw (main.south) -- ++(0,-8pt);
    \draw (main.north) ++(-6pt,0) -- ++(0,8pt);
    \draw (main.north) ++(6pt,0) -- ++(0,8pt);
  }\,$ is strictly associative, and when at least two inputs are a ``$g$,'' both terms in equation~(\ref{eqn.associator}) vanish for degree reasons.  Considering the remaining cases gives the above as the only nontrivial matrix coefficients.  (Repeated $\pm$ signs in the same formula should all agree.)
  
  One choice for the primitive is:
  \begin{equation*}
  \,\tikz[baseline=(main.base)]{
    \node[draw,rectangle,inner sep=2pt] (main) {$
        \,\tikz[baseline=(basepoint)]{ 
    \path (0,3pt) coordinate (basepoint) (0,1pt) node[dot] {} (-8pt,11pt) node[dot] {};
    \draw[](0,-9pt) -- (0,1pt);
    \draw[](0,1pt) -- (-8pt,11pt);
    \draw[](-8pt,11pt) -- (-12pt,21pt);
    \draw[](-8pt,11pt) -- (0pt,21pt);
    \draw[](0,1pt) -- (12pt,21pt);
  }\,
  -
  \,\tikz[baseline=(basepoint)]{ 
    \path (0,3pt) coordinate (basepoint) (0,1pt) node[dot] {} (8pt,11pt) node[dot] {};
    \draw[](0,-9pt) -- (0,1pt);
    \draw[](0,1pt) -- (8pt,11pt);
    \draw[](0,1pt) -- (-12pt,21pt);
    \draw[](8pt,11pt) -- (0pt,21pt);
    \draw[](8pt,11pt) -- (12pt,21pt);
  }\,
    $};
    \draw (main.south) -- ++(0,-8pt);
    \draw (main.north) ++(-12pt,0) -- ++(0,8pt);
    \draw (main.north) ++(0pt,0) -- ++(0,8pt);
    \draw (main.north) ++(12pt,0) -- ++(0,8pt);
  }\,
  : \quad\begin{cases}  \displaystyle   g_{x\pm \frac12} \otimes g_{x\pm \frac12} \otimes f_{x} \mapsto \pm\frac1{12}g_{x\pm \frac12} , 
  \\[12pt] \displaystyle     g_{x\pm \frac12}  \otimes f_{x} \otimes g_{x\pm \frac12}    \mapsto \pm\frac1{6}g_{x\pm \frac12} , 
  \\[12pt] \displaystyle  f_{x} \otimes  g_{x\pm \frac12}  \otimes g_{x\pm \frac12}    \mapsto \pm\frac1{12}g_{x\pm \frac12}.
  \end{cases}
  \end{equation*}
  To check this, we can compute:
  \begin{multline*}
    \left[\d,
  \,\tikz[baseline=(main.base)]{
    \node[draw,rectangle,inner sep=2pt] (main) {$
        \,\tikz[baseline=(basepoint)]{ 
    \path (0,3pt) coordinate (basepoint) (0,1pt) node[dot] {} (-8pt,11pt) node[dot] {};
    \draw[](0,-9pt) -- (0,1pt);
    \draw[](0,1pt) -- (-8pt,11pt);
    \draw[](-8pt,11pt) -- (-12pt,21pt);
    \draw[](-8pt,11pt) -- (0pt,21pt);
    \draw[](0,1pt) -- (12pt,21pt);
  }\,
  -
  \,\tikz[baseline=(basepoint)]{ 
    \path (0,3pt) coordinate (basepoint) (0,1pt) node[dot] {} (8pt,11pt) node[dot] {};
    \draw[](0,-9pt) -- (0,1pt);
    \draw[](0,1pt) -- (8pt,11pt);
    \draw[](0,1pt) -- (-12pt,21pt);
    \draw[](8pt,11pt) -- (0pt,21pt);
    \draw[](8pt,11pt) -- (12pt,21pt);
  }\,
    $};
    \draw (main.south) -- ++(0,-8pt);
    \draw (main.north) ++(-12pt,0) -- ++(0,8pt);
    \draw (main.north) ++(0pt,0) -- ++(0,8pt);
    \draw (main.north) ++(12pt,0) -- ++(0,8pt);
  }\,
  \right]
  \left( g_{x\pm \frac12} \otimes f_{x} \otimes f_{x}\right) 
  = 
  \,\tikz[baseline=(main.base)]{
    \node[draw,rectangle,inner sep=2pt] (main) {$
        \,\tikz[baseline=(basepoint)]{ 
    \path (0,3pt) coordinate (basepoint) (0,1pt) node[dot] {} (-8pt,11pt) node[dot] {};
    \draw[](0,-9pt) -- (0,1pt);
    \draw[](0,1pt) -- (-8pt,11pt);
    \draw[](-8pt,11pt) -- (-12pt,21pt);
    \draw[](-8pt,11pt) -- (0pt,21pt);
    \draw[](0,1pt) -- (12pt,21pt);
  }\,
  -
  \,\tikz[baseline=(basepoint)]{ 
    \path (0,3pt) coordinate (basepoint) (0,1pt) node[dot] {} (8pt,11pt) node[dot] {};
    \draw[](0,-9pt) -- (0,1pt);
    \draw[](0,1pt) -- (8pt,11pt);
    \draw[](0,1pt) -- (-12pt,21pt);
    \draw[](8pt,11pt) -- (0pt,21pt);
    \draw[](8pt,11pt) -- (12pt,21pt);
  }\,
    $};
    \draw (main.south) -- ++(0,-8pt);
    \draw (main.north) ++(-12pt,0) -- ++(0,8pt);
    \draw (main.north) ++(0pt,0) -- ++(0,8pt);
    \draw (main.north) ++(12pt,0) -- ++(0,8pt);
  }\,
  \left( \d(g_{x\pm \frac12} \otimes f_{x} \otimes f_{x})\right)
  \\ = 
  \,\tikz[baseline=(main.base)]{
    \node[draw,rectangle,inner sep=2pt] (main) {$
        \,\tikz[baseline=(basepoint)]{ 
    \path (0,3pt) coordinate (basepoint) (0,1pt) node[dot] {} (-8pt,11pt) node[dot] {};
    \draw[](0,-9pt) -- (0,1pt);
    \draw[](0,1pt) -- (-8pt,11pt);
    \draw[](-8pt,11pt) -- (-12pt,21pt);
    \draw[](-8pt,11pt) -- (0pt,21pt);
    \draw[](0,1pt) -- (12pt,21pt);
  }\,
  -
  \,\tikz[baseline=(basepoint)]{ 
    \path (0,3pt) coordinate (basepoint) (0,1pt) node[dot] {} (8pt,11pt) node[dot] {};
    \draw[](0,-9pt) -- (0,1pt);
    \draw[](0,1pt) -- (8pt,11pt);
    \draw[](0,1pt) -- (-12pt,21pt);
    \draw[](8pt,11pt) -- (0pt,21pt);
    \draw[](8pt,11pt) -- (12pt,21pt);
  }\,
    $};
    \draw (main.south) -- ++(0,-8pt);
    \draw (main.north) ++(-12pt,0) -- ++(0,8pt);
    \draw (main.north) ++(0pt,0) -- ++(0,8pt);
    \draw (main.north) ++(12pt,0) -- ++(0,8pt);
  }\,
  \left( -g_{x\pm \frac12} \otimes (g_{x-\frac12} - g_{x + \frac12}) \otimes f_{x} -g_{x\pm \frac12} \otimes f_{x}  \otimes (g_{x-\frac12} - g_{x + \frac12})  \right)
  \\ = \pm \left( \pm \frac1{12}\right)g_{x\pm \frac12} \pm\left( \pm \frac16\right) g_{x\pm \frac12} = \frac14  g_{x\pm \frac12}.
  \end{multline*}
  \begin{multline*}
    \left[\d,
  \,\tikz[baseline=(main.base)]{
    \node[draw,rectangle,inner sep=2pt] (main) {$
        \,\tikz[baseline=(basepoint)]{ 
    \path (0,3pt) coordinate (basepoint) (0,1pt) node[dot] {} (-8pt,11pt) node[dot] {};
    \draw[](0,-9pt) -- (0,1pt);
    \draw[](0,1pt) -- (-8pt,11pt);
    \draw[](-8pt,11pt) -- (-12pt,21pt);
    \draw[](-8pt,11pt) -- (0pt,21pt);
    \draw[](0,1pt) -- (12pt,21pt);
  }\,
  -
  \,\tikz[baseline=(basepoint)]{ 
    \path (0,3pt) coordinate (basepoint) (0,1pt) node[dot] {} (8pt,11pt) node[dot] {};
    \draw[](0,-9pt) -- (0,1pt);
    \draw[](0,1pt) -- (8pt,11pt);
    \draw[](0,1pt) -- (-12pt,21pt);
    \draw[](8pt,11pt) -- (0pt,21pt);
    \draw[](8pt,11pt) -- (12pt,21pt);
  }\,
    $};
    \draw (main.south) -- ++(0,-8pt);
    \draw (main.north) ++(-12pt,0) -- ++(0,8pt);
    \draw (main.north) ++(0pt,0) -- ++(0,8pt);
    \draw (main.north) ++(12pt,0) -- ++(0,8pt);
  }\,
  \right]
  \left( g_{x\pm \frac12} \otimes f_{x} \otimes f_{x \pm 1}\right) 
  = 
  \,\tikz[baseline=(main.base)]{
    \node[draw,rectangle,inner sep=2pt] (main) {$
        \,\tikz[baseline=(basepoint)]{ 
    \path (0,3pt) coordinate (basepoint) (0,1pt) node[dot] {} (-8pt,11pt) node[dot] {};
    \draw[](0,-9pt) -- (0,1pt);
    \draw[](0,1pt) -- (-8pt,11pt);
    \draw[](-8pt,11pt) -- (-12pt,21pt);
    \draw[](-8pt,11pt) -- (0pt,21pt);
    \draw[](0,1pt) -- (12pt,21pt);
  }\,
  -
  \,\tikz[baseline=(basepoint)]{ 
    \path (0,3pt) coordinate (basepoint) (0,1pt) node[dot] {} (8pt,11pt) node[dot] {};
    \draw[](0,-9pt) -- (0,1pt);
    \draw[](0,1pt) -- (8pt,11pt);
    \draw[](0,1pt) -- (-12pt,21pt);
    \draw[](8pt,11pt) -- (0pt,21pt);
    \draw[](8pt,11pt) -- (12pt,21pt);
  }\,
    $};
    \draw (main.south) -- ++(0,-8pt);
    \draw (main.north) ++(-12pt,0) -- ++(0,8pt);
    \draw (main.north) ++(0pt,0) -- ++(0,8pt);
    \draw (main.north) ++(12pt,0) -- ++(0,8pt);
  }\,
  \left( \d(g_{x\pm \frac12} \otimes f_{x} \otimes f_{x \pm 1})\right)
  \\ = 
    \,\tikz[baseline=(main.base)]{
    \node[draw,rectangle,inner sep=2pt] (main) {$
        \,\tikz[baseline=(basepoint)]{ 
    \path (0,3pt) coordinate (basepoint) (0,1pt) node[dot] {} (-8pt,11pt) node[dot] {};
    \draw[](0,-9pt) -- (0,1pt);
    \draw[](0,1pt) -- (-8pt,11pt);
    \draw[](-8pt,11pt) -- (-12pt,21pt);
    \draw[](-8pt,11pt) -- (0pt,21pt);
    \draw[](0,1pt) -- (12pt,21pt);
  }\,
  -
  \,\tikz[baseline=(basepoint)]{ 
    \path (0,3pt) coordinate (basepoint) (0,1pt) node[dot] {} (8pt,11pt) node[dot] {};
    \draw[](0,-9pt) -- (0,1pt);
    \draw[](0,1pt) -- (8pt,11pt);
    \draw[](0,1pt) -- (-12pt,21pt);
    \draw[](8pt,11pt) -- (0pt,21pt);
    \draw[](8pt,11pt) -- (12pt,21pt);
  }\,
    $};
    \draw (main.south) -- ++(0,-8pt);
    \draw (main.north) ++(-12pt,0) -- ++(0,8pt);
    \draw (main.north) ++(0pt,0) -- ++(0,8pt);
    \draw (main.north) ++(12pt,0) -- ++(0,8pt);
  }\,
  \left(
    -g_{x\pm \frac12} \otimes (g_{x-\frac12} - g_{x+\frac12}) \otimes f_{x \pm 1} - g_{x\pm \frac12} \otimes f_{x} \otimes (g_{x \pm 1 - \frac12} - g_{x \pm 1 + \frac12})
  \right)
  \\ = \pm \left( \mp \frac1{12} \right)g_{x \pm \frac12} \mp \left( \pm \frac16 \right) g_{x \pm \frac12} = -\frac14 g_{x\pm \frac12}.
  \end{multline*}
  \begin{multline*}
    \left[\d,
  \,\tikz[baseline=(main.base)]{
    \node[draw,rectangle,inner sep=2pt] (main) {$
        \,\tikz[baseline=(basepoint)]{ 
    \path (0,3pt) coordinate (basepoint) (0,1pt) node[dot] {} (-8pt,11pt) node[dot] {};
    \draw[](0,-9pt) -- (0,1pt);
    \draw[](0,1pt) -- (-8pt,11pt);
    \draw[](-8pt,11pt) -- (-12pt,21pt);
    \draw[](-8pt,11pt) -- (0pt,21pt);
    \draw[](0,1pt) -- (12pt,21pt);
  }\,
  -
  \,\tikz[baseline=(basepoint)]{ 
    \path (0,3pt) coordinate (basepoint) (0,1pt) node[dot] {} (8pt,11pt) node[dot] {};
    \draw[](0,-9pt) -- (0,1pt);
    \draw[](0,1pt) -- (8pt,11pt);
    \draw[](0,1pt) -- (-12pt,21pt);
    \draw[](8pt,11pt) -- (0pt,21pt);
    \draw[](8pt,11pt) -- (12pt,21pt);
  }\,
    $};
    \draw (main.south) -- ++(0,-8pt);
    \draw (main.north) ++(-12pt,0) -- ++(0,8pt);
    \draw (main.north) ++(0pt,0) -- ++(0,8pt);
    \draw (main.north) ++(12pt,0) -- ++(0,8pt);
  }\,
  \right]
  \left( f_{x} \otimes g_{x\pm \frac1{12}} \otimes f_{x}\right) 
  = 
  \,\tikz[baseline=(main.base)]{
    \node[draw,rectangle,inner sep=2pt] (main) {$
        \,\tikz[baseline=(basepoint)]{ 
    \path (0,3pt) coordinate (basepoint) (0,1pt) node[dot] {} (-8pt,11pt) node[dot] {};
    \draw[](0,-9pt) -- (0,1pt);
    \draw[](0,1pt) -- (-8pt,11pt);
    \draw[](-8pt,11pt) -- (-12pt,21pt);
    \draw[](-8pt,11pt) -- (0pt,21pt);
    \draw[](0,1pt) -- (12pt,21pt);
  }\,
  -
  \,\tikz[baseline=(basepoint)]{ 
    \path (0,3pt) coordinate (basepoint) (0,1pt) node[dot] {} (8pt,11pt) node[dot] {};
    \draw[](0,-9pt) -- (0,1pt);
    \draw[](0,1pt) -- (8pt,11pt);
    \draw[](0,1pt) -- (-12pt,21pt);
    \draw[](8pt,11pt) -- (0pt,21pt);
    \draw[](8pt,11pt) -- (12pt,21pt);
  }\,
    $};
    \draw (main.south) -- ++(0,-8pt);
    \draw (main.north) ++(-12pt,0) -- ++(0,8pt);
    \draw (main.north) ++(0pt,0) -- ++(0,8pt);
    \draw (main.north) ++(12pt,0) -- ++(0,8pt);
  }\,
  \left( \d(f_{x} \otimes g_{x\pm \frac12} \otimes f_{x})\right)
  \\ = 
  \,\tikz[baseline=(main.base)]{
    \node[draw,rectangle,inner sep=2pt] (main) {$
        \,\tikz[baseline=(basepoint)]{ 
    \path (0,3pt) coordinate (basepoint) (0,1pt) node[dot] {} (-8pt,11pt) node[dot] {};
    \draw[](0,-9pt) -- (0,1pt);
    \draw[](0,1pt) -- (-8pt,11pt);
    \draw[](-8pt,11pt) -- (-12pt,21pt);
    \draw[](-8pt,11pt) -- (0pt,21pt);
    \draw[](0,1pt) -- (12pt,21pt);
  }\,
  -
  \,\tikz[baseline=(basepoint)]{ 
    \path (0,3pt) coordinate (basepoint) (0,1pt) node[dot] {} (8pt,11pt) node[dot] {};
    \draw[](0,-9pt) -- (0,1pt);
    \draw[](0,1pt) -- (8pt,11pt);
    \draw[](0,1pt) -- (-12pt,21pt);
    \draw[](8pt,11pt) -- (0pt,21pt);
    \draw[](8pt,11pt) -- (12pt,21pt);
  }\,
    $};
    \draw (main.south) -- ++(0,-8pt);
    \draw (main.north) ++(-12pt,0) -- ++(0,8pt);
    \draw (main.north) ++(0pt,0) -- ++(0,8pt);
    \draw (main.north) ++(12pt,0) -- ++(0,8pt);
  }\,
  \left( (g_{x - \frac12} - g_{x + \frac12})\otimes g_{x\pm \frac12} \otimes f_{x} - f_{x} \otimes g_{x\pm \frac12} \otimes (g_{x - \frac12} - g_{x + \frac12}) \right)
  \\ = \mp \left( \pm \frac1{12} \right) g_{x\pm \frac12}  \pm \left( \pm \frac1{12} \right) g_{x\pm \frac12} = 0.
  \end{multline*}
  \begin{multline*}
    \left[\d,
  \,\tikz[baseline=(main.base)]{
    \node[draw,rectangle,inner sep=2pt] (main) {$
        \,\tikz[baseline=(basepoint)]{ 
    \path (0,3pt) coordinate (basepoint) (0,1pt) node[dot] {} (-8pt,11pt) node[dot] {};
    \draw[](0,-9pt) -- (0,1pt);
    \draw[](0,1pt) -- (-8pt,11pt);
    \draw[](-8pt,11pt) -- (-12pt,21pt);
    \draw[](-8pt,11pt) -- (0pt,21pt);
    \draw[](0,1pt) -- (12pt,21pt);
  }\,
  -
  \,\tikz[baseline=(basepoint)]{ 
    \path (0,3pt) coordinate (basepoint) (0,1pt) node[dot] {} (8pt,11pt) node[dot] {};
    \draw[](0,-9pt) -- (0,1pt);
    \draw[](0,1pt) -- (8pt,11pt);
    \draw[](0,1pt) -- (-12pt,21pt);
    \draw[](8pt,11pt) -- (0pt,21pt);
    \draw[](8pt,11pt) -- (12pt,21pt);
  }\,
    $};
    \draw (main.south) -- ++(0,-8pt);
    \draw (main.north) ++(-12pt,0) -- ++(0,8pt);
    \draw (main.north) ++(0pt,0) -- ++(0,8pt);
    \draw (main.north) ++(12pt,0) -- ++(0,8pt);
  }\,
  \right]
  \left( f_{x} \otimes g_{x\pm \frac1{12}} \otimes f_{x\pm 1}\right) 
  = 
  \,\tikz[baseline=(main.base)]{
    \node[draw,rectangle,inner sep=2pt] (main) {$
        \,\tikz[baseline=(basepoint)]{ 
    \path (0,3pt) coordinate (basepoint) (0,1pt) node[dot] {} (-8pt,11pt) node[dot] {};
    \draw[](0,-9pt) -- (0,1pt);
    \draw[](0,1pt) -- (-8pt,11pt);
    \draw[](-8pt,11pt) -- (-12pt,21pt);
    \draw[](-8pt,11pt) -- (0pt,21pt);
    \draw[](0,1pt) -- (12pt,21pt);
  }\,
  -
  \,\tikz[baseline=(basepoint)]{ 
    \path (0,3pt) coordinate (basepoint) (0,1pt) node[dot] {} (8pt,11pt) node[dot] {};
    \draw[](0,-9pt) -- (0,1pt);
    \draw[](0,1pt) -- (8pt,11pt);
    \draw[](0,1pt) -- (-12pt,21pt);
    \draw[](8pt,11pt) -- (0pt,21pt);
    \draw[](8pt,11pt) -- (12pt,21pt);
  }\,
    $};
    \draw (main.south) -- ++(0,-8pt);
    \draw (main.north) ++(-12pt,0) -- ++(0,8pt);
    \draw (main.north) ++(0pt,0) -- ++(0,8pt);
    \draw (main.north) ++(12pt,0) -- ++(0,8pt);
  }\,
  \left( \d(f_{x} \otimes g_{x\pm \frac12} \otimes f_{x \pm 1})\right)
  \\ = 
  \,\tikz[baseline=(main.base)]{
    \node[draw,rectangle,inner sep=2pt] (main) {$
        \,\tikz[baseline=(basepoint)]{ 
    \path (0,3pt) coordinate (basepoint) (0,1pt) node[dot] {} (-8pt,11pt) node[dot] {};
    \draw[](0,-9pt) -- (0,1pt);
    \draw[](0,1pt) -- (-8pt,11pt);
    \draw[](-8pt,11pt) -- (-12pt,21pt);
    \draw[](-8pt,11pt) -- (0pt,21pt);
    \draw[](0,1pt) -- (12pt,21pt);
  }\,
  -
  \,\tikz[baseline=(basepoint)]{ 
    \path (0,3pt) coordinate (basepoint) (0,1pt) node[dot] {} (8pt,11pt) node[dot] {};
    \draw[](0,-9pt) -- (0,1pt);
    \draw[](0,1pt) -- (8pt,11pt);
    \draw[](0,1pt) -- (-12pt,21pt);
    \draw[](8pt,11pt) -- (0pt,21pt);
    \draw[](8pt,11pt) -- (12pt,21pt);
  }\,
    $};
    \draw (main.south) -- ++(0,-8pt);
    \draw (main.north) ++(-12pt,0) -- ++(0,8pt);
    \draw (main.north) ++(0pt,0) -- ++(0,8pt);
    \draw (main.north) ++(12pt,0) -- ++(0,8pt);
  }\,
  \left( (g_{x - \frac12} - g_{x + \frac12})\otimes g_{x\pm \frac12} \otimes f_{x\pm 1} - f_{x} \otimes g_{x\pm \frac12} \otimes (g_{x \pm 1 - \frac12} - g_{x \pm 1 + \frac12}) \right)
  \\ = \mp \left( \mp \frac1{12} \right) g_{x\pm \frac12}  \pm \left( \mp \frac1{12} \right) g_{x\pm \frac12} = 0.
  \end{multline*}
  The computations for inputs of the form $f\otimes f\otimes g$ agree with those for $g\otimes f \otimes g$, by symmetry.  All other matrix coefficients vanish either for degree reasons or for quasilocality reasons.  Direct calculation verifies that this proposed homotopy controlling associativity in fact generates the two-dimensional irrep of $\bS_{3}$.
  
  The homotopy controlling coassociativity should satisfy:
  $$ \hspace{-1in}
  \left[ \d,   \,\tikz[baseline=(main.base)]{
    \node[draw,rectangle,inner sep=2pt] (main) {$
    -  \,\tikz[baseline=(basepoint)]{ 
    \path (0,3pt) coordinate (basepoint) (8pt,1pt) node[dot] {} (0,11pt) node[dot] {};
    \draw[](-12pt,-9pt) -- (0,11pt);
    \draw[](0pt,-9pt) -- (8pt,1pt);
    \draw[](12pt,-9pt) -- (8pt,1pt);
    \draw[](8pt,1pt) -- (0,11pt);
    \draw[](0,11pt) -- (0,21pt);
  }\,
  -
  \,\tikz[baseline=(basepoint)]{ 
    \path (0,3pt) coordinate (basepoint) (-8pt,1pt) node[dot] {} (0,11pt) node[dot] {};
    \draw[](12pt,-9pt) -- (0,11pt);
    \draw[](-12pt,-9pt) -- (-8pt,1pt);
    \draw[](0pt,-9pt) -- (-8pt,1pt);
    \draw[](-8pt,1pt) -- (0,11pt);
    \draw[](0,11pt) -- (0,21pt);
  }\,
    $};
    \draw (main.north) -- ++(0,8pt);
    \draw (main.south) ++(-12pt,0) -- ++(0,-8pt);
    \draw (main.south) ++(0pt,0) -- ++(0,-8pt);
    \draw (main.south) ++(12pt,0) -- ++(0,-8pt);
  }\,
  \right] \dwant
  f_{x} \mapsto -\frac14 (g_{x-\frac12} - g_{x + \frac12}) \otimes (g_{x-\frac12} - g_{x + \frac12}) \otimes f_{x} + \frac14 f_{x}\otimes (g_{x-\frac12} - g_{x + \frac12})\otimes (g_{x-\frac12} - g_{x + \frac12}).
  \hspace{-1in}
  $$
  The reader is invited to check that the following proposed homotopy does in fact solve this equation, and transforms appropriately under the $\bS_{3}$-action:
  $$ \hspace{-1in}
   \,\tikz[baseline=(main.base)]{
    \node[draw,rectangle,inner sep=2pt] (main) {$
    -  \,\tikz[baseline=(basepoint)]{ 
    \path (0,3pt) coordinate (basepoint) (8pt,1pt) node[dot] {} (0,11pt) node[dot] {};
    \draw[](-12pt,-9pt) -- (0,11pt);
    \draw[](0pt,-9pt) -- (8pt,1pt);
    \draw[](12pt,-9pt) -- (8pt,1pt);
    \draw[](8pt,1pt) -- (0,11pt);
    \draw[](0,11pt) -- (0,21pt);
  }\,
  -
  \,\tikz[baseline=(basepoint)]{ 
    \path (0,3pt) coordinate (basepoint) (-8pt,1pt) node[dot] {} (0,11pt) node[dot] {};
    \draw[](12pt,-9pt) -- (0,11pt);
    \draw[](-12pt,-9pt) -- (-8pt,1pt);
    \draw[](0pt,-9pt) -- (-8pt,1pt);
    \draw[](-8pt,1pt) -- (0,11pt);
    \draw[](0,11pt) -- (0,21pt);
  }\,
    $};
    \draw (main.north) -- ++(0,8pt);
    \draw (main.south) ++(-12pt,0) -- ++(0,-8pt);
    \draw (main.south) ++(0pt,0) -- ++(0,-8pt);
    \draw (main.south) ++(12pt,0) -- ++(0,-8pt);
  }\,
  : f_{x} \mapsto \frac1{12} (g_{x-\frac12} - g_{x+\frac12}) \otimes f_{x} \otimes f_{x} - \frac16 f_{x} \otimes (g_{x-\frac12} - g_{x+\frac12}) \otimes f_{x} + \frac1{12}f_{x} \otimes f_{x} \otimes (g_{x-\frac12} - g_{x+\frac12}).\hspace{-1in}$$
  
  The homotopy controlling the Frobenius axiom should satisfy:
  $$ \left[ \d,
    \,\tikz[baseline=(main.base)]{
    \node[draw,rectangle,inner sep=2pt] (main) {$
          \,\tikz[baseline=(basepoint)]{ 
    \path (0,3pt) coordinate (basepoint) (0,1pt) node[dot] {} (0,11pt) node[dot] {};
    \draw[](-8pt,-9pt) -- (0,1pt);
    \draw[](8pt,-9pt) -- (0,1pt);
    \draw[](0,1pt) -- (0,11pt);
    \draw[](0,11pt) -- (-8pt,21pt);
    \draw[](0,11pt) -- (8pt,21pt);
  }\,
  +
  \,\tikz[baseline=(basepoint)]{ 
    \path (0,3pt) coordinate (basepoint) (8pt,1pt) node[dot] {} (-8pt,11pt) node[dot] {};
    \draw[](-8pt,-9pt) -- (-8pt,11pt);
    \draw[](8pt,-9pt) -- (8pt,1pt);
    \draw[](8pt,1pt) -- (-8pt,11pt);
    \draw[](-8pt,11pt) -- (-8pt,21pt);
    \draw[](8pt,1pt) -- (8pt,21pt);
  }\,
    $};
    \draw (main.south) ++(-6pt,0) -- ++(0pt,-8pt);
    \draw (main.south) ++(6pt,0) -- ++(0pt,-8pt);
    \draw (main.north) ++(-6pt,0) -- ++(0,8pt);
    \draw (main.north) ++(6pt,0) -- ++(0,8pt);
  }\,
  \right]
  \dwant
  \begin{cases}
  \displaystyle f_{x}\otimes f_{x} \mapsto -\frac14 f_{x} \otimes (g_{x - \frac12} + g_{x + \frac12}) ,
  & \quad
  \displaystyle f_{x} \otimes f_{x\pm 1} \mapsto \frac14 f_{x} \otimes g_{x \pm \frac12} ,
  \\[12pt]
  \displaystyle f_{x} \otimes g_{x\pm \frac12} \mapsto \mp \frac14 (g_{x- \frac12} - g_{x + \frac12})\otimes g_{x\pm \frac12} ,
  & \quad
  \displaystyle g_{x\pm \frac12} \otimes f_{x} \mapsto 0.
  \end{cases}
  $$
  All remaining matrix entries vanish  for reasons of either degree or quasilocality.  This homotopy illustrates that the composition of $0$-quasilocal operations might only be $1$-quasilocal.  One potential solution is:
  $$ \,\tikz[baseline=(main.base)]{
    \node[draw,rectangle,inner sep=2pt] (main) {$
          \,\tikz[baseline=(basepoint)]{ 
    \path (0,3pt) coordinate (basepoint) (0,1pt) node[dot] {} (0,11pt) node[dot] {};
    \draw[](-8pt,-9pt) -- (0,1pt);
    \draw[](8pt,-9pt) -- (0,1pt);
    \draw[](0,1pt) -- (0,11pt);
    \draw[](0,11pt) -- (-8pt,21pt);
    \draw[](0,11pt) -- (8pt,21pt);
  }\,
  +
  \,\tikz[baseline=(basepoint)]{ 
    \path (0,3pt) coordinate (basepoint) (8pt,1pt) node[dot] {} (-8pt,11pt) node[dot] {};
    \draw[](-8pt,-9pt) -- (-8pt,11pt);
    \draw[](8pt,-9pt) -- (8pt,1pt);
    \draw[](8pt,1pt) -- (-8pt,11pt);
    \draw[](-8pt,11pt) -- (-8pt,21pt);
    \draw[](8pt,1pt) -- (8pt,21pt);
  }\,
    $};
    \draw (main.south) ++(-6pt,0) -- ++(0pt,-8pt);
    \draw (main.south) ++(6pt,0) -- ++(0pt,-8pt);
    \draw (main.north) ++(-6pt,0) -- ++(0,8pt);
    \draw (main.north) ++(6pt,0) -- ++(0,8pt);
  }\,
  : 
  f_{x} \otimes g_{x \pm \frac12} \mapsto \mp \frac14 f_{x}\otimes g_{x\pm \frac12},
  $$
  with all other matrix entries vanishing.
  
  Turning now to the genus-one operations, we find ourselves faced with solving
  \begin{gather*}
   \left[ \d, \graphDbox\right]
  \dwant
  \begin{cases}
    \displaystyle f_{x}\otimes g_{x \pm \frac12} \mapsto \pm \frac1{12} g_{x\pm \frac12}, \\[12pt]
    \displaystyle g_{x \pm \frac12} \otimes f_{x} \mapsto \mp \frac1{12} g_{x \pm \frac12},
  \end{cases}
  \\
  \left[ \d, \graphAbox\right]
  \dwant 
  f_{x} \mapsto \frac1{12} (g_{x - \frac12} - g_{x + \frac12}) \otimes f_{x} + \frac1{12} f_{x} \otimes (g_{x - \frac12} - g_{x + \frac12}) .
  \end{gather*}
  The simplest solution is:
  $$ \graphDbox:
  g_{x+ \frac12}\otimes g_{x + \frac12} \mapsto - \frac1{12} g_{x+ \frac12}
  , \quad
  \graphAbox : f_{x} \mapsto \frac1{12} f_{x}\otimes f_{x}.
  $$
  
  Combining these, we see that
  $$\left[ \d,\graphBbox\right] \want -\frac1{12}\id \quad :\quad f_{x} \mapsto -\frac1{12} f_{x}, \quad g_{x+ \frac12} \mapsto -\frac1{12}g_{x+ \frac12}. $$
  But this is not exact.
\end{proof}


\section*{Acknowledgments} 
Gabriel C.\ Drummond-Cole provided enlightening discussions about both the general meaning and the onerous details of the calculations in this paper.  Many of those conversations occurred while I was a visitor at the IBS Center for Geometry and Physics (Korea), where I was also provided generous hospitality and a comfortable work environment.  The anonymous referee provided many valuable comments improving the content and exposition of this paper.  This work is supported by the NSF grant DMS-1304054.

%

\end{document}